\title[Characterizations of categories of commutative C*-subalgebras]{Characterizations of categories of \\commutative C*-subalgebras}
\author{Chris Heunen}
\address{University of Oxford}
\email{heunen@cs.ox.ac.uk}
\date{\today}
\subjclass{46L35, 18F99, 06B75, 81P10}
\keywords{C*-algebra, maximal abelian subalgebra, injective
  $*$-homomorphism, category, Grothendieck construction}
\thanks{The author is grateful to Jonathon Funk for helping him
  understand and in fact suggesting large parts of
  Appendix~\ref{sec:inversesemigroups}, and 
  to Manny Reyes, for many interesting discussions, especially about
  functoriality. % ; and to Andreas D{\"o}ring and Klaas Landsman for
  % pointing out known results surrounding Lemma~\ref{separablemasas}.
  The author was supported by the Netherlands
  Organisation for Scientific Research and the Office of Naval Research.} 
\newcommand{\cat}[1]{\ensuremath{\mathbf{#1}}}
\newcommand{\Cat}[1]{\ensuremath{\mathbf{#1}}}
\newcommand{\op}{\ensuremath{^{\mathrm{op}}}}
\newcommand{\after}{\ensuremath{\circ}}
\newcommand{\id}[1][]{\ensuremath{\mathrm{id}_{#1}}}
\newcommand{\PSh}{\mathrm{PSh}}
\newcommand{\B}{\ensuremath{\mathcal{B}}}
\newcommand{\C}{\ensuremath{\mathcal{C}}}
\newcommand{\Cs}{\ensuremath{\C_{\subseteq}}}
\newcommand{\Cm}{\ensuremath{\C_{\rightarrowtail}}}
\newcommand{\Ci}{\ensuremath{\C_{\cong}}}
\newcommand{\V}{\ensuremath{\mathcal{V}}}
\newcommand{\Vs}{\ensuremath{\V_{\subseteq}}}
\newcommand{\Vm}{\ensuremath{\V_{\rightarrowtail}}}
\newcommand{\Aut}{\ensuremath{\mathrm{Aut}}}
\newcommand{\field}[1]{\ensuremath{\mathbb{#1}}}
\newcommand{\ie}{\textit{i.e.}~}
\DeclareMathOperator{\Spec}{\mathrm{Spec}}
\DeclareMathOperator{\Proj}{\mathrm{Proj}}
\DeclareMathOperator{\dom}{\mathrm{dom}}
\DeclareMathOperator{\cod}{\mathrm{cod}}
\DeclareMathOperator{\rk}{\mathrm{rank}}
\theoremstyle{plain}
\newtheorem{theorem}{Theorem}[section]
\newtheorem{corollary}[theorem]{Corollary}
\newtheorem{lemma}[theorem]{Lemma}
\newtheorem{proposition}[theorem]{Proposition}
\theoremstyle{definition}
\newtheorem{definition}[theorem]{Definition}
\newtheorem{example}[theorem]{Example}
\newtheorem{remark}[theorem]{Remark}
\begin{document}
\maketitle

\begin{abstract}
  We aim to characterize the category of injective $*$-homomorphisms
  between commutative C*-subalgebras of a given C*-algebra $A$. We reduce
  this problem to finding a weakly terminal commutative
  subalgebra of $A$, and solve the latter for various C*-algebras, including
  all commutative ones and all type I von Neumann algebras.
  This addresses a natural generalization of the Mackey--Piron programme: which
  lattices are those of closed subspaces of Hilbert space?
  We also discuss the way this categorified generalization differs from the original
  question. 
\end{abstract}

\section{Introduction}

The collection $\C(A)$ of commutative C*-subalgebras of a fixed 
C*-algebra $A$ can be made into a category under various choices of
morphisms. Two natural ones are inclusions and injective $*$-homomorphisms,
resulting in categories $\Cs(A)$ and $\Cm(A)$, respectively. The goal
of this article is to characterize these categories.

Categories based on $\C(A)$ are interesting for a number of reasons.
A first motivation to study such categories is the hope
that they could lead to a noncommutative extension of Gelfand duality.
It is known that $\Cs(A)$ determines $A$ as a partial
C*-algebra~\cite{vdbergheunen:colim}. 
Except when $A\cong \mathbb{C}^2$ or $A \cong \mathbb{M}_2(\mathbb{C})$,
equivalently $\Cs(A)$
determines precisely the quasi-Jordan structure of $A$~\cite{hamhalter:pseudojordan,hamhalterturilova:pseudojordan}.
Thus, $\C(A)$ in itself is already an interesting invariant of
$A$. Moreover, structures based on $\C(A)$ circumvent obstructions to
a noncommutative Gelfand duality that afflict many
other candidates~\cite{vdbergheunen:nogo}.  
Indeed, for C*-algebras $A$ with enough projections, adding a little more
structure to $\C(A)$ fully determines the algebra structure of
$A$~\cite{heunenreyes:awstar,heunenreyes:diagonal}. To get a full noncommutative Gelfand
duality for such algebras, it suffices to characterize the
structures based on $\C(A)$ that arise this way; an important step is
clearly to characterize categories of the form $\C(A)$. 

Second, there is a physical perspective on $\C(A)$. The underlying
idea, due to Bohr, is that one can only empirically access a quantum mechanical
system, whose observables are modeled by a (noncommutative)
C*-algebra, through its classical subsystems, as modeled by commutative C*-subalgebras~\cite{heunenetal:topos}.
Categories based on $\C(A)$ are of paramount importance in the
recent uses of topos theory in research in foundations of physics based
on this idea that proposes a new form of quantum logic~\cite{doeringisham:topos,heunenetal:bohrification}. 
Knowing which categories are of the form $\C(A)$ also characterizes
which toposes are of the form studied in that programme. This should
increase insight into the intrinsic structure of such toposes, and
hence shed light on the foundations of  quantum physics such toposes
aim to describe logically.  

Third, more generally, a characterization of $\C(A)$ satisfactorily addresses a
general theme in research in foundations of quantum
mechanics. For example, it addresses (a categorification of)
the Mackey--Piron programme. This programme asks the question: which
orthomodular lattices are those of closed subspaces of Hilbert
space? 
(See~\cite{piron:foundations,soler:orthomodular,kalmbach:measures}.) 
A characterization of $\C(A)$ would provide an answer, because choosing a
commutative C*-subalgebra of the matrix algebra $\mathbb{M}_n(\field{C})$
amounts to choosing an orthonormal subset and hence a closed subspace
of $\field{C}^n$, and an
appropriate generalization to infinite dimension holds as well  (see
also Theorem~\ref{VsProj} below and~\cite{heunen:complementarity}). 
Similarly, a characterization of $\C(A)$ has consequences in the study
of test spaces. These are defined as collections of orthogonal
subsets of a Hilbert space satisfying some conditions, and have been
proposed as axioms for operational quantum mechanics. One of the major
questions there is again which test spaces arise from propositions on Hilbert
spaces~\cite{wilce:testspaces}. 

Our main result is to reduce characterizing $\Cm(A)$ to
finding a weakly terminal commutative subalgebra of $A$. 
This is closely related to analyzing all maximal abelian subalgebras
(masas). Explicating the structure of masas of C*-algebras in general is a
hard problem, and not much seems to be known systematically outside of
the case of factors of type I and type $\mathrm{II}_1$;
see~\cite{bures:masas,sinclairsmith:masas}. Fortunately, finding 
a weakly terminal commutative subalgebra is generally easier than
finding all masas. We prove that the following classes of C*-algebras $A$
possess weakly terminal commutative subalgebras, and therefore we find a
full characterization of $\Cm(A)$ for:
\begin{itemize}
\item type I von Neumann algebras, including all
  finite-dimensional C*-algebras;
\item commutative C*-algebras.
\end{itemize}
% Hopefully future investigations will establish weakly terminal commutative
% subalgebras for larger classes of C*-algebras.

The strategy behind our characterization is as follows. The key
insight is to recognize $\Cm(D)$ for a commutative C*-algebra $D$ as
the Grothendieck construction of an action of a monoid $M$ on a
partially ordered set $P$. We characterize such so-called
amalgamations. Next, we use known results to characterize the
partially ordered set $P=\Cs(D)$, consisting of 
partitions of the Gelfand spectrum of $D$. Then, we show that $\Cm(A)$ is
equivalent to $\Cm(D)$ for a weakly terminal object $D$ in $\Cm(A)$.
Finally, we establish such a weakly terminal object $D$ for the
various types of C*-algebras $A$ mentioned, finishing the characterization. 
This last step is the only one limiting our characterization to
C*-algebras $A$ with weakly terminal commutative subalgebras. 
Summarizing:
\begin{enumerate}[(1)]
  \item show that a C*-algebra $A$ has a weakly terminal abelian
    subalgebra $D$;
  \item show that $\Cm(A)$ is equivalent to $\Cm(D)$;
  \item show that $\Cm(D)$ is equivalent to $P(X) \rtimes S(X)$, with $X$ 
    the spectrum of $D$;
  \item characterize $P(X) \rtimes S(X)$ in terms of $P(X)$ and $S(X)$;
  \item a characterization of $P(X)$ exists;
  \item in the cases in question, $X$, and hence $S(X)$, is easy to characterize.
\end{enumerate}

Thus we address the Mackey--Piron programme in a different way than the theorems of Piron~\cite{piron:foundations} and Sol{\`e}r~\cite{soler:orthomodular}, which together form the only characterization of the lattice of closed subspaces of a Hilbert space we are aware of. 
Piron's theorem states that the lattice should be complete, atomic, irreducible, orthomodular, and satisfy the covering law, from which it follows that it must be the lattice of closed subspaces of some Euclidean space over a skew field. Sol{\`e}r's theorem says that if additionally this Euclidean space is infinite-dimensional and has the property that any closed subspace is a direct summand, then the skew field must be the reals, complexes or quaternions, and the space must be a Hilbert space.
Both Sol{\`e}r's direct summand condition and Piron's lattice-theoretic axioms relate to our use of partition lattices $P(X)$, but instead of orthomodularity we use the action of $S(X)$.
Interestingly, our results apply to arbitrary Hilbert spaces, whereas Sol{\`e}r's theorem only holds for infinite-dimensional ones. 

The paper is structured as follows. We start with
Section~\ref{sec:preliminaries}, which introduces the poset $\Cs(A)$ and the
category $\Cm(A)$ and discusses their basic properties and motivation. A more
in-depth analysis of the relationship between the two, again depending
on the Grothendieck construction, is made later, in
Section~\ref{sec:inclusionsinjections}. Our main results
are presented in between. To aid intuition, we first cover the finite-dimensional
case, and only then incorporate the subtleties of the
infinite-dimensional case. Section~\ref{sec:groupamalgamations}
characterizes amalgamations of groups and posets, which is then used
in Section~\ref{sec:finite} to establish the characterization in the
finite-dimensional case. Then, Section~\ref{sec:monoidamalgamations}
refines the earlier analysis to characterize amalgamations of monoids
and posets. This is used in Section~\ref{sec:infinite} to establish
the characterization in the infinite-dimensional case.
Appendix~\ref{sec:inversesemigroups} records some intermediate results
of independent interest. In particular, it discusses an alternative
way to investigate the relationship between $\Cm(A)$ and $\Cs(A)$.
 
To end this introduction let us briefly indicate the differences between $\Cm(A)$ and $\Cs(A)$.
This will be discussed in more depth in Section~\ref{sec:inclusionsinjections}, but it might be helpful to mention them now to set the scene. Any morphism in $\Cm(A)$ factors uniquely as a $*$-isomorphism followed by a morphism in $\Cs(A)$.
If $\Cm(A) \cong \Cm(B)$ are isomorphic categories, then $\Cs(A) \cong \Cs(B)$ are isomorphic posets. Therefore, as discussed above, both categories $\Cm(A)$ and $\Cs(A)$ are \emph{invariants} of the C*-algebra $A$, in the sense that both determine the (quasi-)Jordan structure of $A$, and are hence respected by (quasi-)Jordan homomorphisms. 
We will mostly be interested in a coarser notion of invariant, namely equivalence of categories, rather than isomorphism of categories. For posetal categories like $\Cs(A)$, isomorphism and equivalence coincide, but for $\Cm(A)$ this makes a difference: $\Cm(A) \simeq \Cm(B)$ need not imply $\Cs(A) \cong \Cs(B)$ (and certainly not $A \cong B$). It turns out that $\Cm(A) \simeq \Cm(B)$ are equivalent categories precisely when $\Cs(A)$ and $\Cs(B)$ are Morita-equivalent, in the sense that they have equivalent presheaf categories $\PSh(\Cs(A)) \simeq \PSh(\Cs(B))$. This explains why equivalence of categories is a more natural invariant from the point of view of category theory and topos theory.

\section{Motivation}\label{sec:preliminaries}

We do not require C*-algebras to have a unit, and write $\cat{Cstar}$ for the category of C*-algebras and $*$-homomorphisms.

\begin{definition}
  Write $\C(A)$, or simply $\C$, for the collection of nonzero commutative
  C*-subalgebras $C$ of a C*-algebra $A$. This set of objects can be made
  into a category by various choices of morphisms, such as:
  \begin{itemize}
  \item inclusions $C \hookrightarrow C'$, given by $c \mapsto c$,
    yielding a (posetal) category $\Cs(A)$;
  \item injective $*$-morphisms $C \rightarrowtail C'$, giving a
    (left-cancellative) category $\Cm(A)$.
    % \item $*$-isomorphisms $C \stackrel{\cong}{\to} C'$, giving a category
    %   (groupoid) $\Ci$.
  \end{itemize}
\end{definition}

These two categories are interesting for two related reasons. First, they
form a major ingredient in a new attack on a noncommutative extension of Gelfand
duality~\cite{vdbergheunen:colim,vdbergheunen:nogo,heunenreyes:awstar}. Essentially, one
could think of them as invariants of a C*-algebra.
Second, they play an important role in the recent use of topos theory
in the foundations of quantum physics. From this perspective, one
could think of them as encoding the logic of a quantum-mechanical
system whose observables are modeled by the C*-algebra $A$.
We will discuss these two perspectives in turn, but first we consider
functoriality of the construction $A \mapsto \C(A)$.
Section~\ref{sec:inclusionsinjections} below discusses the
relationship between the two choices of morphisms, $\Cm(A)$ or
$\Cs(A)$ in more detail.  

\subsection*{Functoriality}

The assignment $A \mapsto \Cs(A)$ extends to a functor: given a
$*$-homomorphism $\varphi \colon A \to B$, direct images $C \mapsto \varphi(C)$ form
a morphism $\Cs(A) \to \Cs(B)$ of posets, for if $C \subseteq C'$,
then $\varphi(C) \subseteq \varphi(C')$. Well-definedness relies on
the following fundamental fact, that we record as a lemma for future reference. 

\begin{lemma}\label{cstarimage}
  The set-theoretic image of a C*-algebra under a $*$-homomorphism is
  again a C*-algebra. 
\end{lemma}
\begin{proof}
  See~\cite[Theorem~4.1.9]{kadisonringrose:operatoralgebras}.
  \qed
\end{proof}

The assignment $A \mapsto \Cm(A)$ has to be adapted to be made
functorial. Either we only consider injective $*$-homomorphisms $A
\rightarrowtail B$, or we restrict the target category $\Cm(A)$ as follows.
Write $\Cat{Cat}$ for the category of small categories and functors.

\begin{lemma}\label{Cmfunctorial}
  There is a functor $\Cat{Cstar} \to \Cat{Cat}$, sending $A$ to the
  subcategory of $\Cm(A)$ with morphisms those $i \colon C \rightarrow
  C'$ satisfying
  \begin{equation*}\label{injectionextends}
    i^{-1}(I \cap C') = I \cap C
  \end{equation*}
  for all closed (two-sided) ideals $I$ of $A$.
\end{lemma}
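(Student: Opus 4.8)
The plan is to verify two things in turn: that the indicated collection of morphisms genuinely forms a subcategory of $\Cm(A)$, and that the assignment $\varphi \mapsto F(\varphi)$ on $*$-homomorphisms is functorial into $\Cat{Cat}$. For the subcategory, the identity on each $C$ satisfies the condition trivially, since for it the requirement reads $I \cap C = I \cap C$, and if $i \colon C \to C'$ and $j \colon C' \to C''$ both satisfy it, then $(j \circ i)^{-1}(I \cap C'') = i^{-1}\bigl(j^{-1}(I \cap C'')\bigr) = i^{-1}(I \cap C') = I \cap C$ for every closed ideal $I$, so the condition is preserved under composition.

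On objects I would send $C$ to its direct image $\varphi(C)$, a commutative C*-subalgebra of $B$ by Lemma~\ref{cstarimage}. On a morphism $i \colon C \to C'$ I would define $F(\varphi)(i) \colon \varphi(C) \to \varphi(C')$ as the unique map sending $\varphi(c) \mapsto \varphi(i(c))$, filling the evident square with the restrictions of $\varphi$. The crucial observation is that both well-definedness and injectivity of this map are controlled by a single instance of the hypothesis, namely the closed two-sided ideal $I = \ker\varphi$. Indeed, if $\varphi(c_1) = \varphi(c_2)$ then $c_1 - c_2 \in \ker\varphi \cap C$, so the inclusion $\ker\varphi \cap C \subseteq i^{-1}(\ker\varphi \cap C')$ supplied by the hypothesis gives $i(c_1) - i(c_2) \in \ker\varphi$, whence $\varphi(i(c_1)) = \varphi(i(c_2))$; conversely, if $\varphi(i(c)) = 0$ then $i(c) \in \ker\varphi \cap C'$, so $c \in i^{-1}(\ker\varphi \cap C') = \ker\varphi \cap C$ by the reverse inclusion, and therefore $\varphi(c) = 0$, establishing injectivity.

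The step I expect to be the main obstacle is checking that $F(\varphi)(i)$ again lies in the designated subcategory of $\Cm(B)$, \ie that it satisfies the ideal condition for every closed ideal $J$ of $B$. The key is to pull $J$ back along $\varphi$: the set $I := \varphi^{-1}(J)$ is a closed two-sided ideal of $A$, and a short computation shows $J \cap \varphi(C) = \varphi(I \cap C)$ and likewise $J \cap \varphi(C') = \varphi(I \cap C')$. Feeding this particular $I$ into the hypothesis for $i$ transports the equality $i^{-1}(I \cap C') = I \cap C$ across $\varphi$ to the desired $F(\varphi)(i)^{-1}(J \cap \varphi(C')) = J \cap \varphi(C)$. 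This is exactly where demanding the condition for \emph{all} ideals $I$ of $A$ pays off: as $J$ ranges over the closed ideals of $B$, the preimages $\varphi^{-1}(J)$ range over closed ideals of $A$, and the hypothesis must be available at each of them.

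What remains is routine bookkeeping: $F$ of the identity is the identity functor, and $F(\psi \circ \varphi)(i)$ agrees with $F(\psi)\bigl(F(\varphi)(i)\bigr)$ because both send $\psi(\varphi(c)) \mapsto \psi(\varphi(i(c)))$, so $F$ preserves identities and composition. The only genuinely delicate point at the level of objects is the standing convention that the subalgebras be \emph{nonzero}: since $\varphi$ need not be injective, $\varphi(C)$ can collapse to $0$, so one must either admit the zero subalgebra as an object or pass to the wide subcategory on which $\varphi$ remains faithful. I would flag this explicitly rather than leave it implicit.
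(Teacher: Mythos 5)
Your proof is correct, and its core coincides with the paper's: the paper's entire argument is your middle paragraph, namely that well-definedness and injectivity of the induced map $\varphi(C) \to \varphi(C')$ reduce, by linearity, to the single instance $I = \ker\varphi$ of the hypothesis. Where you go beyond the paper is in the third paragraph: the paper never verifies that $F(\varphi)(i)$ again satisfies the ideal condition in $B$, i.e.\ that the functor actually lands in the designated subcategory of $\Cm(B)$ rather than merely in $\Cm(B)$. Your argument for this --- pulling a closed ideal $J$ of $B$ back to $I = \varphi^{-1}(J)$, checking $J \cap \varphi(C) = \varphi(I \cap C)$, and transporting the hypothesis across $\varphi$ --- is correct (one needs $\ker\varphi \subseteq \varphi^{-1}(J)$ to see that $\varphi(i(c)) \in \varphi(I\cap C')$ forces $i(c) \in I \cap C'$, which holds since $0 \in J$), and it is precisely the step that explains why the condition must be imposed for \emph{all} closed ideals of $A$ rather than just $\ker\varphi$. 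This is a genuine completion of the published proof rather than a redundancy. Your closing remark about $\varphi(C)$ possibly collapsing to $0$ is also a legitimate point that the paper leaves unaddressed, since $\C(B)$ consists of \emph{nonzero} subalgebras by the paper's own convention; flagging that one must either admit the zero algebra or restrict appropriately is the right instinct.
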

\begin{proof}
  Let $\varphi \colon A \to B$ be a $*$-homomorphism, and let $i$ be as
  in the statement of the lemma. Then $i$ induces a well-defined
  injective $*$-homomorphism $\varphi(C) \to 
  \varphi(C')$ precisely when $\varphi(c_1)=\varphi(c_2) \Longleftrightarrow
  \varphi(i(c_1)) = \varphi(i(c_2))$. Since $\varphi$ and $i$ are linear, this
  comes down to $\varphi(c)=0 \Longleftrightarrow \varphi(i(c))=0$, \ie
  $\ker(\varphi) \cap C = \ker(\varphi \after i)$. 
  Setting $I=\ker(\varphi)$, this becomes
  \begin{align*}
    I \cap C
    & = \{ c \in C \mid \varphi(c)=0 \} \\
    & = \{ c \in C \mid \varphi(i(c))=0\} \\
    & = i^{-1}\big( \{ c' \in C' \mid \varphi(c') = 0 \} \big) \\
    & = i^{-1}(I \cap C')
  \end{align*}
  and is therefore satisfied.
  % This becomes $I
  % \cap C = i^{-1}(I \cap C')$ for $I=\ker(\varphi)$, and is therefore satisfied.
  \qed
\end{proof}

Notice that $*$-homomorphisms satisfying the condition of the previous
lemma are automatically injective, as is seen by taking $I=\{0\}$.

Notice also that when $A$ is a topologically simple C*-algebra, such as the
algebra $\mathbb{M}_n(\mathbb{C})$ of $n$-by-$n$ complex matrices, then the subcategory of the 
previous lemma is actually the whole category $\Cm(A)$.

% The same restriction on morphisms as in the previous lemma
% also makes the assignment $A \mapsto \Ci(A)$ functorial, as follows.

% \begin{lemma}
%   There is a functor $\Cat{CStar} \to \Cat{Gpd}$, sending $A$ to the
%   subcategory of $\Ci(A)$ with morphisms those $i \colon C
%   \stackrel{\cong}{\to} C'$ satisfying~\eqref{injectionextends} for
%   all closed (two-sided) ideals $I$ of $A$.
% \end{lemma}
% \begin{proof}
%   Let $\varphi \colon A \to B$ be a $*$-homomorphism, and let $i$ be as
%   in the statement of the lemma. Then $i$ induces a well-defined
%   injective $*$-homomorphism $\varphi(C) \to \varphi(C')$ by
%   Lemma~\ref{Cmfunctorial}. But it is also surjective, because
%   $\varphi(c') = \varphi(i(i^{-1}(c')))$ for any $c' \in C'$, since
%   $i$ it itself bijective.
% \end{proof}

\subsection*{Invariants}

Let us temporarily consider von Neumann algebras $A$ and their von
Neumann subalgebras $\V(A)$, giving categories $\Vs$ and $\Vm$. We
will show that $\Vs$ contains exactly the same information as the
lattice $\Proj(A)$ of projections of $A$, in the technical sense that they are functors with equivalent images. This lattice has been
studied in depth, so from the point of view of (new) invariants of $A$, the
category $\Vm$ is more interesting.
See also Remark~\ref{invariants} below. By extension, $\Cm$ is
possibly more interesting as an invariant than $\Cs$, because
$\C(A)$ and $\V(A)$ coincide for finite-dimensional C*-algebras $A$. 

Denote the category of von Neumann algebras and unital normal
$*$-homo\-mor\-phisms by $\Cat{Neumann}$, and write $\Cat{cNeumann}$ for
the full subcategory of commutative (unital von Neumann) algebras. Denote
the category of orthomodular lattices and lattice morphisms preserving the
orthocomplement by $\Cat{Ortho}$. The functor $\Proj \colon
\Cat{Neumann} \to \Cat{Ortho}$ takes $A$ to $\{p \in A \mid
p^2=p=p^*\}$ under the ordering $p \leq q$ iff $pq=p$. On morphisms
$f \colon A \to B$ it acts as $p \mapsto f(p)$. 
Recall that the essential image of a functor $F$ is the smallest subcategory
of the target category containing all isomorphisms and all morphisms of the form $F(f)$.
Denote the essential
image of $\Proj$ by $\cat{D}$; traditional quantum logic is the study
of this subcategory of $\Cat{Ortho}$~\cite{redei:quantumlogic}. 

Denote by $\Cat{Poset}[\Cat{cNeumann}]$ the following category: objects are
sets of commutative von Neumann algebras partially ordered by
inclusion (\ie $C \leq C'$ iff $C \subseteq C'$); morphisms
are monotonic functions. We may regard 
$\Vs$ as a functor $\Cat{Neumann} \to
\Cat{Poset}[\Cat{cNeumann}]$. Denote the essential image of $\Vs$ by
$\cat{C}$; this is a subcategory of $\Cat{Poset}[\Cat{cNeumann}]$. 

We now define two new functors, $F \colon \cat{C} \to \cat{D}$ and $G
\colon \cat{D} \to \cat{C}$. The functor $F$ acts on an object $\Vs(A)$ as
follows. For each $C \in \Vs(A)$, we know that $\Proj(C)$ is a Boolean
algebra~\cite[4.16]{redei:quantumlogic}. Because additionally the
hypothesis of Kalmbach's Bundle lemma, is
satisfied, these Boolean algebras unite into an orthomodular lattice
$F(\Vs(A))$. This assignment extends naturally to morphisms. 

\begin{lemma}[Bundle lemma]
  Let $\{B_i\}$ be a family of Boolean algebras such that $\vee_i=\vee_j$, $\neg_i=\neg_j$,
  and $0_i=0_j$ on intersections $B_i \cap B_j$. If $\leq$ on
  $\bigcup_i B_i$ is transitive and makes it into a lattice,
  then $\bigcup_i B_i$ is an orthomodular lattice.
\end{lemma}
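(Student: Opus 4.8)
The plan is to read the order on the union $L := \bigcup_i B_i$ as the union of the block orders, so that $a \leq b$ means precisely that $a,b$ lie in a common $B_i$ with $a \leq_i b$. The hypotheses then make $\leq$ a partial order: transitivity is assumed outright, and antisymmetry follows because two witnesses place $a,b \in B_i \cap B_j$, where $\vee_i = \vee_j$ forces $\leq_i$ and $\leq_j$ to agree, so $a \leq_i b$ and $b \leq_i a$ give $a = b$; being a lattice is also assumed. Applying De Morgan inside each Boolean block, agreement of $\vee_i$ and $\neg_i$ on overlaps propagates to agreement of $\wedge_i$ and of the top $1_i = \neg_i 0_i$, which together with the common bottom $0$ exhibits a single $0$ and a single $1 = \neg 0$ shared by all blocks. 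I would then define the orthocomplement globally by $\neg a := \neg_i a$ for any $i$ with $a \in B_i$; this is well defined since $\neg_i = \neg_j$ on $B_i \cap B_j$, and it is the candidate structure making $L$ orthomodular.

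The engine of the argument is a \emph{localization} step: whenever two elements are comparable in $L$ they already lie in a common block, and the complement of a block element stays in that block. Concretely, if $a \leq b$ choose $B_i$ with $a,b \in B_i$; then $\neg a = \neg_i a \in B_i$ as well, so $a$, $b$, $\neg a$, $\neg b$ are simultaneously available inside the single Boolean algebra $B_i$. I would first record that $\leq$ restricts to $\leq_i$ on each $B_i$ (the nontrivial direction again uses that the orders, being determined by $\vee$, agree on overlaps), so every order-theoretic statement about block elements may be read either in $L$ or in $B_i$.

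With localization in hand, the ortholattice axioms each reduce to a computation inside one Boolean algebra. Order-reversal of $\neg$ holds because $a \leq b$ gives $a \leq_i b$ in a common block, whence $\neg_i b \leq_i \neg_i a$; involutivity is just $\neg_i \neg_i a = a$. For the complement law $a \vee \neg a = 1$ I would show that $1$ is the only upper bound of $\{a,\neg a\}$: such a $u$ satisfies $a \leq u$ witnessed in some block $B_j$, which also contains $\neg a$ since $a$ lies in the overlap $B_i \cap B_j$, and then $u \geq_j a \vee_j \neg_j a = 1_j = 1$. The dual law $a \wedge \neg a = 0$ follows symmetrically (or by applying $\neg$), so $L$ is an ortholattice.

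Finally, for orthomodularity I would use the equivalent implication form: in any ortholattice, the orthomodular law is equivalent to ``$a \leq b$ and $\neg a \wedge b = 0$ imply $a = b$.'' Given such $a \leq b$, localize to a block $B_i$ containing $a$, $b$, $\neg a$. The block meet $\neg_i a \wedge_i b$ is a lower bound of $\{\neg a, b\}$ in $L$, hence lies below the global meet $\neg a \wedge b = 0$ and so equals $0 = 0_i$; since $B_i$ is Boolean, hence distributive, $\neg_i a \wedge_i b = 0$ together with $a \leq_i b$ forces $a = b$. I expect the main obstacle to be exactly this passage between the global operations on $L$ and the per-block operations: global joins and meets need not be computed blockwise, so one must arrange every step to invoke only the easy inequality ``block meet $\leq$ global meet'' (and its dual), which is precisely what the implication form of the orthomodular law makes possible.
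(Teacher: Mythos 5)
The paper contains no argument for this statement: it is Kalmbach's ``bundle lemma'' and the text simply cites \cite[1.4.22]{kalmbach:orthomodularlattices}. Your proposal is therefore necessarily a different route --- a self-contained reconstruction --- and, as far as I can check, it is correct. The two ingredients that make it work are exactly the ones you isolate: (i) the localization principle, namely that comparability in $\bigcup_i B_i$ is always witnessed inside a single block and that the global order restricts to the block order on each $B_i$ (both because $\leq_i$ is determined by $\vee_i$, which agrees on overlaps), so that $\neg$ is well defined and every ortholattice axiom reduces to a computation in one Boolean algebra; and (ii) using only the one-directional comparisons ``block meet $\leq$ global meet'' and its dual, which is what forces you to establish $a\vee\neg a=1$ by showing every upper bound dominates $1$, and to verify orthomodularity in the implication form $a\leq b$ and $\neg a\wedge b=0$ imply $a=b$. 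Two points you should make explicit if you write this up. First, the equivalence of that implication form with the orthomodular law in an arbitrary ortholattice is itself a short lemma you are importing: for $c=a\vee(\neg a\wedge b)\leq b$ one has $\neg c\wedge b\leq(\neg a\wedge b)\wedge\neg(\neg a\wedge b)=0$, so the implication form forces $c=b$. Second, your argument needs all blocks to share a single bottom element, so that $0$ (and hence $1=\neg 0$) lies in every pairwise intersection; this is the intended reading of the hypothesis $0_i=0_j$ (and holds in the paper's application, where every $\Proj(C)$ contains $0$ and $1$ of $A$), but under a strictly literal ``only on nonempty intersections'' reading with disjoint blocks, the global $0$ and $1$ you rely on would not exist.
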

\begin{proof}
  See~\cite[1.4.22]{kalmbach:orthomodularlattices}.  
  \qed
\end{proof}

The functor $G$ acts on the projection lattice $L$ of a von Neumann
algebra as follows. Consider all complete Boolean sublattices $B$ of $L$ as a
poset under inclusion. For each $B$, the continuous functions on its
Stone spectrum form a commutative von Neumann algebra. Thus we obtain
an object $G(L)$ in $\cat{C}$, and this assignment extends naturally to morphisms. 

% \begin{lemma}\label{proj}
%   A von Neumann algebra $A$ is determined by its projections: 
%   \[
%   A=\Proj(A)''. 
%   \]
% \end{lemma}
% \begin{proof}
%   See~\cite[6.3]{redei:quantumlogic}.
% \end{proof}

\begin{theorem}\label{VsProj}
  The functors $F$ and $G$ form an equivalence, and make the following diagram commute.
  \[\xymatrix@R-2ex{
    & \Cat{Neumann} \ar_-{\Vs}[dl] \ar^-{\Proj}[dr] \\
    \cat{C} \ar@<.25ex>^-{F}[rr] \ar@<-.33ex>@{}|-{\simeq}[rr] 
    && \cat{D} \ar@<1ex>^-{G}[ll]
  }\]
\end{theorem}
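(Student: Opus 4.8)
The plan is to exhibit $F$ and $G$ as mutually inverse equivalences that commute with $\Vs$ and $\Proj$, since this is precisely what it means for $\Vs$ and $\Proj$ to be equivalent as objects of $\Cat{Neumann}/\Cat{Cat}$. The cleanest route is to prove the two ``triangle'' isomorphisms $F \after \Vs \cong \Proj$ and $G \after \Proj \cong \Vs$, natural in $A$. Because $\cat{C}$ and $\cat{D}$ are by definition the essential images of $\Vs$ and $\Proj$, these two isomorphisms already force $G \after F \cong \id[\cat{C}]$ and $F \after G \cong \id[\cat{D}]$: for an object of the form $\Vs(A)$ one computes $GF(\Vs(A)) \cong G(\Proj(A)) \cong \Vs(A)$, and essential surjectivity of $\Vs$ together with naturality transports this to all of $\cat{C}$, and dually for $\cat{D}$. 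So after checking that $F$ and $G$ are well defined and functorial---the object parts being handled by Kalmbach's Bundle lemma recalled above for $F$, and by the spectral theorem for $G$---the whole statement reduces to the two triangle isomorphisms.

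For $F \after \Vs \cong \Proj$ I would compute $F(\Vs(A))$ directly. As a set it is $\bigcup_{C \in \Vs(A)} \Proj(C)$, and this equals $\Proj(A)$: the inclusion into $\Proj(A)$ is immediate since $C \subseteq A$ gives $\Proj(C) \subseteq \Proj(A)$, while the reverse holds because any $p \in \Proj(A)$ lies in the commutative von Neumann subalgebra it generates, so $p \in \Proj(C)$ for some $C \in \Vs(A)$. On each block $\Proj(C)$ the order, the orthocomplement $p \mapsto 1-p$, and the Boolean join all agree with those of $\Proj(A)$, since the join of two commuting projections is computed identically in $A$ and in any commutative subalgebra containing them. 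Hence the orthomodular lattice glued together by the Bundle lemma is literally $\Proj(A)$, and naturality in $A$ follows because both $F(\Vs(-))$ and $\Proj$ act on a morphism by direct image of projections.

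For $G \after \Proj \cong \Vs$ the heart is an order isomorphism between the poset $\Vs(A)$ and the poset of complete Boolean sublattices of $\Proj(A)$, given by $C \mapsto \Proj(C)$ with inverse $B \mapsto B''$, the von Neumann algebra generated by $B$, equivalently the continuous functions on the Stone spectrum of $B$ used in the definition of $G$. That this is a bijection is the spectral-theoretic fact that a commutative von Neumann algebra is recovered from its complete Boolean algebra of projections, and it is monotone in both directions because $C \subseteq C'$ holds if and only if $\Proj(C) \subseteq \Proj(C')$. This yields $G(\Proj(A)) \cong \Vs(A)$, again naturally in $A$.

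The main obstacle will be this third step: making the correspondence between complete Boolean sublattices of $\Proj(A)$ and commutative von Neumann subalgebras of $A$ precise and correct. One must take ``complete'' with suprema computed inside $\Proj(A)$, so that $B$ is genuinely the projection lattice of a von Neumann (not merely abstractly complete) subalgebra; one must check that $B \mapsto B''$ stays inside $A$ and has exactly $B$ as its projection lattice; and one must verify that the assignment is functorial and natural, so that it really defines the functor $G$ rather than a bare object map. Once this spectral bookkeeping is settled, the remaining verifications are routine and the two natural isomorphisms assemble into the claimed equivalence over $\Cat{Neumann}$.
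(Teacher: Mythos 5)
Your proposal is correct and follows the same route the paper intends: the paper's proof is just the one-line remark that the theorem ``follows directly from the definitions and the previous lemma,'' and your two triangle isomorphisms $F\after\Vs\cong\Proj$ (via Kalmbach's Bundle lemma and the fact that every projection lies in some commutative subalgebra) and $G\after\Proj\cong\Vs$ (via the spectral-theorem correspondence $C\mapsto\Proj(C)$, $B\mapsto B''$) are exactly the details being elided. Your flagged ``spectral bookkeeping'' for $G$ is indeed the only point requiring care, and it goes through as you describe.
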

\begin{proof}
  Follows directly from the definitions and the previous lemma.
  \qed
\end{proof}

Indeed, both $\Vs(A)$ and $\Proj(A)$ capture the Jordan algebra structure of $A$~\cite{hardingdoering:jordan}, excepting the case where $A$ has summands of type $I_2$. 

Returning to the setting of C*-algebras, notice that the previous theorem
fails, because there are C*-algebras without any nontrivial
projections. But every C*-algebra has many commutative 
C*-subalgebras: every self-adjoint element generates one, and every element of a
C*-algebra is a complex linear combination of self-adjoint
elements. For C*-algebras, $\Cs(A)$ captures precisely the
pseudo-Jordan algebra structure of $A$~\cite{hamhalter:pseudojordan,hamhalterturilova:pseudojordan}.
In this regard, it is also worth remarking that the functor $\Cs \colon\Cat{Cstar} \to \Cat{Poset}[\Cat{cCstar}]$
factors through the category of partial C*-algebras~\cite{vdbergheunen:colim}.

\subsection*{Toposes in foundations of physics}

The main theorem in the application of topos theory to foundations of
quantum physics is the following. The tautological functor $C \mapsto C$ is an
internal (possibly nonunital)
C*-algebra~\cite[Theorem~6.4.8]{heunenetal:bohrification}. It holds in
both toposes $\Cat{Set}^{\Cs}$ and $\Cat{Set}^{\Cm}$ because 
of the fundamental Lemma~\ref{cstarimage} above. Categorically, $\Cm$
is a more natural choice than $\Cs$. 
% As argued above, this choice is
% also more interesting from an algebraic point of view. 

But to characterize a presheaf category is the same as characterizing
the category it is based on, by Morita equivalence; see also
Section~\ref{sec:inclusionsinjections} and
Appendix~\ref{sec:inversesemigroups} below. Thus, our main results
also characterize toposes of the form $\Cat{Set}^{\Cm}$. 
For a more or less practical account of the above
folklore knowledge we refer to~\cite{bunge:presheaves}.
% One way the above folklore knowledge can be made precise in a more or
% less practical way precise that is more or less
% practical is the following theorem, due to Bunge.

% \begin{theorem} \label{bunge}
%   Let $\cat{T}$ be an elementary topos, and $f$ the unique geometric
%   morphism $\cat{T} \to \Cat{Set}$, which has direct image part 
%   $f_*=\cat{T}(1,-)$. Then $\cat{T}$ is equivalent to  $\PSh(\cat{C})$
%   if and only if there is a morphism $a \colon A \to I$ in $\cat{T}$ satisfying 
%   \begin{itemize}
%   \item the canonical map $f^*f_*\forall_a(X \times A) \times_I A
%     \to X$ is epic for each $X$ in $\cat{T}$;
%   \item the canonical map $E \times_{f_*(I)} f_* \forall_{a \times
%       \id} (A \times A) \to f_* \forall_a((f^*(E) \times_I A) \times
%     A)$ is an isomorphism for each function $e \colon E
%     \to f_*(I)$;
%   \item if $g \colon X \to Y$ in $\cat{T}$ is epic, then so is 
%     $f_* \forall_a(g \times \id)$;
%   \end{itemize}
%   and $\cat{C} = f_*(A)$.
% \end{theorem}
% \begin{proof}
%   See~\cite{bunge:presheaves}. 
% \end{proof}

\section{Poset-group-amalgamations}\label{sec:groupamalgamations}

This section recalls the Grothendieck construction, focusing on the
special case of an action of a group on a poset. We will call the
resulting categories poset-group-amalgamations. The goal of this
section is to characterize such categories. This is interesting in its
own right, but even more so because in Section~\ref{sec:infinite} we will 
see that $\Cm$ is of this form. For that reason, we
prefer a practical characterization. Therefore, we will not pursue the
highest possible level of generality: the discussion in this
section is in elementary terms spelling out what is probably folklore
knowledge. In particular, the characterization in this section can
be extended to poset-category-amalgamations, and perhaps even to a
characterization of Grothendieck constructions of arbitrary indexed
categories, but we will not pursue this here.
We will use the Grothendieck construct, also called the
category of elements, again in Section~\ref{sec:inclusionsinjections},
where it is discussed more abstractly. The main idea in this section
is to factor out symmetries into a monoid action, leaving just the
partial order. 

\begin{definition}\label{action}
  An \emph{action} of a monoid $M$ (in the category of sets) \emph{on} a category $\cat{C}$ is
  a functor $F \colon M \to \Cat{Cat}(\cat{C},\cat{C})$. Write $mx$
  for the action of $Fm$ on an object $x$ of $\cat{C}$, and $mf$ for
  the action of $Fm$ on a morphism $f$ of $\cat{C}$. 
\end{definition}

\begin{definition}
  If a monoid $M$ acts on a category $\cat{C}$, then we can perform
  the \emph{Grothendieck construction}: we can make a new
  category $\cat{C} \rtimes M$ whose objects are those 
  of $\cat{C}$, and whose morphisms $x \to y$ are pairs $(m,f)$ such
  that $\dom(f) = x$ and $\cod(f)=my$. Composition and identities are
  inherited from $M$ and $\cat{C}$. Explicitly, $\id[x] = (1,\id[x])$,
  and $(n,g) \circ (m,f) = (mn,(mg)f)$.
\end{definition}

If the category $\cat{C}$ in the previous definition is
a partially ordered set $P$, then $P \rtimes M$ has as objects $p \in P$, and
morphisms $p \to q$ are $m \in M$ such that $p \leq mq$, with unit
and composition from $M\op$.

An illustrative example to keep in mind is the following. Let $M$ be
the group of unitary $n$-by-$n$ matrices. Let $P$ be the lattice of
subspaces of $\field{C}^n$, ordered by inclusion. Then $M$
acts on $P$ by $UV = \{ U(v) \mid v \in V \}$ for $U \in M$ and $V \in
P$. Morphisms in $P \rtimes M$ between subspaces $V \subseteq 
\field{C}^n$ and $W \subseteq \field{C}^n$ are unitary matrices $U$ 
such that $U^{-1}(v) \in W$ for all $v \in V$.

This section characterizes categories of the form $P \rtimes G$ for an
action of a group $G$ on a poset $P$ with a least
element. Our characterization will rely on weakly initial objects to
recover $P$ from $P \rtimes G$. Categorically, this is trivial, but as
we will see in Sections~\ref{sec:finite} and~\ref{sec:infinite}, it is a very
important step in our application.
An object $0$ is \emph{weakly initial} when for any
object $x$ there exists a (not necessarily unique) morphism $0 \to
x$; notice that such an object is not necessarily unique up to isomorphism, as an initial object would be. If a category $\cat{A}$ has a weak initial object $0$, we can
regard the endohomset monoid $\cat{A}(0,0)$ as a one-object category. 
Recall that a \emph{retraction} of a functor is a left-inverse. 

\begin{lemma}
  If a category $\cat{A}$ has a weak initial object $0$ and a faithful
  retraction $F$ of the inclusion $\cat{A}(0,0) \hookrightarrow \cat{A}$, then its
  objects are preordered by 
  \[
  x \leq y \iff \exists f \in \cat{A}(x,y) .\, F(f)=1.
  \]
\end{lemma}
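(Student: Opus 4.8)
The plan is to verify the two defining properties of a preorder---reflexivity and transitivity---directly from functoriality of $F$. The key observation is that, since $\cat{A}(0,0)$ is regarded as a one-object category, $F$ sends every object of $\cat{A}$ to that single object (namely $0$), and the monoid unit $1$ is precisely its identity morphism $\id[0]$. Everything then follows from the fact that $F$, being a functor, preserves identities and composition.

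First I would check reflexivity. For any object $x$ the identity $\id[x]$ lies in $\cat{A}(x,x)$, and since $F$ is a functor it preserves identities, so $F(\id[x]) = \id[F(x)] = 1$. Hence $x \leq x$ is witnessed by $\id[x]$.

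Next I would check transitivity. Suppose $x \leq y$ and $y \leq z$, witnessed respectively by some $f \in \cat{A}(x,y)$ with $F(f)=1$ and some $g \in \cat{A}(y,z)$ with $F(g)=1$. Then $g \after f$ is defined and lies in $\cat{A}(x,z)$, and functoriality of $F$ together with the monoid structure of $\cat{A}(0,0)$ gives $F(g \after f) = F(g) \after F(f) = 1 \after 1 = 1$. Thus $x \leq z$ is witnessed by $g \after f$, which completes the verification that $\leq$ is a preorder.

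The only thing to be careful about---rather than a genuine obstacle---is the bookkeeping of what the symbols denote: one must recognise that $F$ being a retraction of the inclusion $\cat{A}(0,0) \hookrightarrow \cat{A}$ is exactly what makes the condition $F(f)=1$ meaningful, and that $1 = \id[0]$ is the identity of the one-object category $\cat{A}(0,0)$. It is worth noting that neither the weak initiality of $0$ nor the faithfulness of $F$ enters the preorder claim itself; those hypotheses are presumably recorded here for the subsequent reconstruction of the poset $P$ from $P \rtimes G$ (for instance, in promoting this preorder to a genuine partial order), rather than for the statement at hand.
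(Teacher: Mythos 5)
Your proof is correct and follows essentially the same argument as the paper: reflexivity from $F(\id[x])=1$ and transitivity from $F(g\after f)=F(g)\after F(f)=1$. Your closing remark that neither weak initiality nor faithfulness is actually needed for the preorder claim itself is accurate and matches the role these hypotheses play later in the paper.
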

\begin{proof}
  Clearly $\leq$ is reflexive, because $F(\id[x])=1$.  It is also
  transitive, for if $x \leq y$ and $y \leq z$, then there are $f
  \colon x \to y$ and $g \colon y \to z$ with $F(f)=1=F(g)$, so that
  $g \after f \colon x \to z$ satisfies $F(g \after f) = F(g) \after
  F(f) = 1 \after 1 = 1$ and $x \leq z$.
  \qed
\end{proof}

Thus we can recover the group $G$ from $\cat{A}=P \rtimes G$ by
looking at $\cat{A}(0,0)$. We can also recover the poset $P$ from
$\cat{A}$ by the previous lemma. What is left is to reconstruct the action
of $G$ on $P$ given just $\cat{A}$. For $m \in G$ and $p \in P$, we
can access the object $mq$ through the morphisms $m \colon p \to q$ in
$\cat{A}$. There is always at least one such morphism, namely $m
\colon mq \to q$, because trivially $mq \leq mq$. In fact, this is
always an isomorphism. We will now use this fact to recover the action of
$G$ on $P$ from $\cat{A}$. We will call this an \emph{amalgamation} by analogy with the use of the term in algebra.

\begin{definition}
  A category $\cat{A}$ is called a \emph{poset-group-amalgamation}
  when there exist a partial order $P$ and a group $G$ such that: 
  \begin{enumerate}
  \item[(A1)] there is a weak initial object $0$, unique up to isomorphism;
  \item[(A2)] there is a faithful retraction $F$ of the inclusion
    $\cat{A}(0,0) \hookrightarrow \cat{A}$;
  \item[(A3)] there is an isomorphism $\alpha \colon \cat{A}(0,0) \to G\op$ of
    monoids;
  \item[(A4)] there is an equivalence $\smash{\xymatrix@1{(\cat{A},\leq)
        \ar@<1ex>|-{\beta}[r] & P \ar@<.5ex>|-{\beta'}[l]}}$ of preorders;
  \item[(A5)] for each object $x$ there is an isomorphism $f \colon x
    \to \beta'(\beta(x))$ with $\alpha F(f)=1$;
  \item[(A6)] for each $y$ and $m$ there is an isomorphism $f \colon x \to y$ with $\alpha F(f)=m$.
  \end{enumerate}
\end{definition}

\begin{example}
  If $P$ is a partial order with least element, and $G$ is a
  group acting on $P$, then $P \rtimes G$ satisfies (A1)--(A6).
\end{example}
\begin{proof}
  The least element $0$ of $P$ is a weak initial object, satisfying
  (A1). Conditions (A2)--(A4) are satisfied by definition, and (A5) is vacuous.  
  To verify (A6) for $q \in P$ and $m \in G$, notice that $mq \leq
  mq$, so $f=1\colon mq \to mq$ is an isomorphism with $\alpha F(f)=1$.
  \qed
\end{proof}

\begin{lemma}\label{recoverGaction}
  If $\cat{A}$ satisfies (A1)--(A6), then it induces an action of $G$
  on $P$ given by $mp = \beta(x)$ if $f \colon x \to 
  \beta'(p)$ is an isomorphism with $\alpha(F(f))=m$.
\end{lemma}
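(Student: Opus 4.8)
The plan is to verify that the proposed rule $mp = \beta(x)$, where $f\colon x \to \beta'(p)$ is an isomorphism with $\alpha(F(f))=m$, actually defines an action of $G$ on $P$. There are two things to establish: first, that the rule is well-defined (the value $\beta(x)$ does not depend on the choice of $x$ or of the isomorphism $f$, and such an $x$ and $f$ exist at all), and second, that the assignment $(m,p)\mapsto mp$ genuinely satisfies the monoid-action axioms, namely $1p = p$ and $m(np) = (mn)p$ (compatibly with $G\op$ through $\alpha$), and is monotone in $p$ so as to be an action on the poset $P$ rather than merely on its underlying set.

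For well-definedness, I would proceed as follows. Existence of a suitable $x$ and $f$ is exactly the content of condition (A6): given the target object $\beta'(p)$ and the element $m\in G$, (A6) supplies an isomorphism $f\colon x\to\beta'(p)$ with $\alpha F(f)=m$. For independence of the choice, suppose $f\colon x\to\beta'(p)$ and $g\colon x'\to\beta'(p)$ are two such isomorphisms with $\alpha F(f)=m=\alpha F(g)$; I want $\beta(x)=\beta(x')$. The natural move is to consider the composite isomorphism $g^{-1}\after f\colon x\to x'$ (or rather to compare $x$ and $x'$ via $\beta'(p)$). Applying $F$ and using functoriality gives $\alpha F(g^{-1}\after f)=\alpha F(g)^{-1}\after\alpha F(f)=m^{-1}m=1$. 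By the definition of the preorder $\leq$ from the earlier lemma (an arrow with $F$-value the unit witnesses $\leq$, once transported through $\alpha$), this forces $x\leq x'$ and symmetrically $x'\leq x$, hence $x$ and $x'$ are isomorphic in the preorder $(\cat{A},\leq)$. Since $\beta$ is a morphism of preorders (part of the equivalence in (A4)) and $P$ is a genuine poset, $\beta(x)=\beta(x')$, so the value $mp$ is well-defined.

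Next I would check the action axioms. For the unit, take $m=1$: by (A5) there is an isomorphism $f\colon x\to\beta'(\beta(x))$ with $\alpha F(f)=1$; taking $x=\beta'(p)$ (using that $\beta\after\beta'\cong\id$ via the equivalence of (A4), so $\beta(\beta'(p))=p$) shows $1p=p$. For associativity, I would unwind both sides: to compute $m(np)$ I first obtain $np=\beta(x_1)$ from some $f_1\colon x_1\to\beta'(p)$ with $\alpha F(f_1)=n$, then $m(np)=\beta(x_2)$ from some $f_2\colon x_2\to\beta'(\beta(x_1))$ with $\alpha F(f_2)=m$; using (A5) to replace $\beta'(\beta(x_1))$ by $x_1$ up to an $F$-trivial isomorphism, I compose $f_1\after f_2$ (of appropriate domain/codomain) and compute its $F$-value as $nm$ in $G\op$, i.e.\ $mn$ in $G$, which by well-definedness identifies $\beta(x_2)$ with $(mn)p$. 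The bookkeeping here is where the $G\op$ versus $G$ conventions and the direction of composition in $\cat{A}$ must be tracked carefully.

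The main obstacle I expect is precisely this last verification of associativity, because it requires juggling several layers of identification simultaneously: the opposite-group twist coming from $\alpha\colon\cat{A}(0,0)\to G\op$, the fact that (A5) only gives equality of $\beta'(\beta(x))$ with $x$ up to an $F$-trivial isomorphism rather than on the nose, and the direction in which the faithful retraction $F$ interacts with composition. All the individual steps are routine once the conventions are pinned down, but assembling the composite isomorphism with the correct domain and codomain, and confirming that its $F$-value is the required product, is the delicate point; faithfulness of $F$ (A2) together with well-definedness is what ultimately closes the argument. Monotonicity of the action in $p$ then follows easily: if $p\leq p'$ in $P$, transporting along $\beta'$ and composing with the defining isomorphisms produces an $F$-trivial arrow witnessing $mp\leq mp'$.
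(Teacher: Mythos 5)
Your proposal matches the paper's proof in all essentials: existence and independence of the choice of defining isomorphism via (A6), faithfulness of $F$ and partiality of $P$; the unit law via an $F$-trivial witness; associativity by composing the two defining isomorphisms through the (A5) isomorphism $x_1 \to \beta'(\beta(x_1))$; and monotonicity by conjugating an $F$-trivial arrow $\beta'(p) \to \beta'(q)$ by the defining isomorphisms. The only differences are cosmetic (the paper takes the identity on $\beta'(p)$ directly for the unit law where you invoke (A5)), so this is essentially the paper's argument.
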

\begin{proof}
  First, notice that for any $p \in P$ and $m \in G$ there exists an
  isomorphism $f \colon x \to \beta'(p)$ with $\alpha(F(f))=m$ by
  (A6). If there is another isomorphism $f' \colon x' \to \beta'(p)$
  with $\alpha(F(f'))=m$, then their composition gives $x \cong x'$,
  and therefore $\beta(x) \cong \beta(x')$. But because $P$ is a
  partial order, this means $\beta(x)=\beta(x')$. Thus the action is
  well-defined on objects. 

  To see that it is well-defined on morphisms, suppose that $p \leq
  q$. Then there is a morphism $f \colon \beta'(p) \to \beta'(q)$ with
  $F(f)=1$. For any $m \colon 0 \to 0$, axiom (A6) provides 
  isomorphisms $f_p \colon x_p \to \beta'(p)$ and $f_q \colon x_q \to
  \beta'(q)$ with $\alpha(F(f_p)) = m = \alpha(F(f_q))$. Then
  $f=f_q^{-1} f f_p \colon x_p \to x_q$ is an isomorphism satisfying
  $\alpha F(f) = m m^{-1} = 1$. So $mp \leq mq$.

  Next, we verify that this assignment is functorial $G \to
  \Cat{Cat}(P,P)$. Clearly $\id[\beta'(p)]$ is an isomorphism
  $x \to \beta'(p)$ with $F(\id[\beta'(p)])=1$. Therefore $1p=\beta(\beta'(p))=p$.

  Finally, for $m_2,m_1 \in M$ and $p \in P$, we have $m_1p=\beta(x_1)$ where $f_1
  \colon x_1 \to \beta'(p)$ is an isomorphism with $\alpha(F(f_1))=m_1$. So
  $m_2(m_1p) = \beta(x_2)$ where $f_2 \colon x_2 \to
  \beta'(\beta(x_1))$ is an isomorphism with $\alpha(F(f_2))=m_2$. By (A5),
  there is an isomorphism $h \colon x_1 \to \beta'(\beta(x_1))$ with
  $F(h)=1$. So $f = f_1 h^{-1} f_2$ is an isomorphism $x_2 \to
  \beta'(p)$ with $\alpha(F(f)) = m_2 m_1$. Thus
  $(m_2m_1)p=\beta(x_2)=m_2(m_1p)$. 
  \qed
\end{proof}

\begin{theorem}\label{charPG}
  If $\cat{A}$ satisfies (A1)--(A6), then there is an equivalence
  $\cat{A} \to P \rtimes G$ given by $x \mapsto \beta(x)$ on objects
  and $f \mapsto \alpha(F(f))$ on morphisms.
\end{theorem}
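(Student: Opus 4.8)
The plan is to verify that the stated assignment $\Phi \colon \cat{A} \to P \rtimes G$, with $\Phi(x) = \beta(x)$ and $\Phi(f) = \alpha(F(f))$, is a well-defined functor that is full, faithful, and essentially surjective. Throughout I would use that, by definition of the preorder on $\cat{A}$, a morphism $f$ with $\alpha(F(f)) = 1$ (equivalently $F(f) = \id[0]$) witnesses $\dom(f) \leq \cod(f)$, and that by (A4) the functor $\beta$ both preserves and reflects the preorder, so $\beta(x) \leq \beta(y)$ if and only if $x \leq y$.

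First I would check that $\Phi$ lands in $P \rtimes G$ on morphisms: for $f \colon x \to y$ and $m = \alpha(F(f))$, I must show $\beta(x) \leq m\beta(y)$. Axiom (A5) supplies an isomorphism $h \colon y \to \beta'(\beta(y))$ with $\alpha(F(h)) = 1$, while the action of Lemma~\ref{recoverGaction} realizes $m\beta(y)$ as $\beta(x')$ for an isomorphism $g \colon x' \to \beta'(\beta(y))$ with $\alpha(F(g)) = m$. Forming $g^{-1} \circ h \circ f \colon x \to x'$ and using that $F$ is a functor (so $F(g^{-1}) = F(g)^{-1}$) together with (A3), which makes $\alpha$ a group isomorphism onto $G\op$, a short computation gives $\alpha(F(g^{-1} \circ h \circ f)) = m^{-1}\cdot_{G\op} m = 1$, whence $x \leq x'$ and therefore $\beta(x) \leq \beta(x') = m\beta(y)$. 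Functoriality is then routine: $\Phi(\id[x]) = \alpha(\id[0]) = 1$, and for composable $f,g$ the identity $\alpha(F(g \circ f)) = \alpha(F(g))\cdot_{G\op}\alpha(F(f))$ matches the composition law of $P \rtimes G$ read off from the Grothendieck formula $(n,g)\circ(m,f) = (mn,(mg)f)$, once the reversal built into $G\op$ is taken into account.

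Faithfulness is immediate: if $\alpha(F(f)) = \alpha(F(f'))$ then $F(f) = F(f')$ since $\alpha$ is injective, and then $f = f'$ since $F$ is faithful by (A2). For fullness, given a morphism $m \colon \beta(x) \to \beta(y)$ of $P \rtimes G$, that is $\beta(x) \leq m\beta(y)$, I would reverse the previous construction: writing $m\beta(y) = \beta(x')$ as above, the inequality $\beta(x) \leq \beta(x')$ reflects to $x \leq x'$, giving a morphism $k \colon x \to x'$ with $\alpha(F(k)) = 1$; then $f = h^{-1} \circ g \circ k \colon x \to y$ satisfies $\alpha(F(f)) = \alpha(F(g)) = m$, so $\Phi(f) = m$. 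Finally, essential surjectivity is handled by the object $\beta'(p)$, since $\beta(\beta'(p)) = p$ because $P$ is a poset and $\beta\beta' \cong \id[P]$.

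The main obstacle is bookkeeping rather than conceptual: the crux lies in the well-definedness and fullness arguments, where one must thread the isomorphisms furnished by (A5) and (A6) and the recovered action through $\alpha \circ F$ while correctly handling the opposite-monoid convention both in $\alpha \colon \cat{A}(0,0) \to G\op$ and in the composition of $P \rtimes G$. I expect these two reversals to cancel exactly as needed, but keeping them aligned is the one place where an error could creep in.
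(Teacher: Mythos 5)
Your proposal is correct and follows essentially the same route as the paper: both verify well-definedness and fullness by threading the (A5)- and (A6)-isomorphisms into a composite whose image under $\alpha\circ F$ is computed in $G\op$, and both get faithfulness from faithfulness of $F$ and essential surjectivity from the equivalence $\beta$. Your composites $g^{-1}\circ h\circ f$ and $h^{-1}\circ g\circ k$ are exactly the paper's $k^{-1}lf$ and $hf$ up to relabelling.
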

\begin{proof}
  First we verify that the assignment of the statement is
  well-defined, \ie that $\alpha(F(f))$ is indeed a morphism of 
  $P \rtimes G$. Given $f \colon x \to y$, we need to show
  that $\beta(x) \leq \alpha(F(f)) \cdot \beta(y)$. Unfolding the
  definition of action, this means finding an isomorphism $k
  \colon x' \to \beta'(\beta(y))$ with $\alpha(F(k))=\alpha(F(f))$ and
  $\beta(x) \leq \beta(x')$. Unfolding the definition of the preorder,
  the latter means finding a morphism $h' \colon \beta'(\beta(x)) \to
  \beta'(\beta(x'))$ with $F(h')=1$. By (A5), it suffices to find $h
  \colon x \to x'$ with $F(h)=1$ instead. But (A6) provides an
  isomorphism $k \colon x' \to \beta'(\beta(y))$ with
  $\alpha(F(k))=\alpha(F(f))$. By (A5) again, there exists an
  isomorphism $l \colon y \to \beta'(\beta(y))$ with
  $\alpha(F(l))=1$. Finally, we can take $h = k^{-1} l f \colon x \to
  x'$. This morphism indeed satisfies $\alpha(F(h)) = \alpha(F(f)) \cdot \alpha(F(l))
  \cdot \alpha(F(k))^{-1} = \alpha(F(k)) \cdot \alpha(F(k))^{-1} = 1$.

  Functoriality follows directly from the previous lemma, so indeed we
  have a well-defined functor $\cat{A} \to P \rtimes G$. Moreover, our
  functor is essentially surjective because $\beta$ is an equivalence,
  and it is faithful because $F$ is faithful.  

  Finally, to prove fullness, let $m \colon \beta(x) \to \beta(y)$ be a
  morphism in $P \rtimes G$. This means that $\beta(x) \leq
  m \beta(y)$, which unfolds to: there are a morphism $f \colon 
  x \to z$ and an isomorphism $h \colon z \to \beta'(\beta(y))$ in
  $\cat{A}$ with $\alpha(F(f))=1$ and $\alpha(F(h))=m$. By (A5), this is
  equivalent to the existence of a morphism $f \colon x \to z$ with
  $\alpha(F(f))=1$ and an isomorphism $h \colon z \to y$ in $\cat{A}$ with 
  $\alpha(F(h))=m$. Now take $k=hf \colon x \to y$ in $\cat{A}$. Then
  \[
  \alpha(F(k)) = \alpha(F(hf)) = \alpha(F(f)) \cdot \alpha(F(h)) = 1 \cdot m = m.
  \]
  Hence our functor is full, and we conclude that it is (half
  of) an equivalence. 
  \qed
\end{proof}

\section{The finite-dimensional case}\label{sec:finite}

This section uses poset-group-amalgamations to completely
characterize the category $\Cm(A)$ for finite-dimensional C*-algebras
$A$. \emph{En passant}, we will also characterize the poset category $\Cs(C)$ for
commutative finite-dimensional C*-algebras $C$.

\subsection*{Finite partition lattices}
We start with identifying the appropriate poset $P$.
Recall that a \emph{partition} $p$ of $\{1,\ldots,n\}$ is a family of
disjoint subsets $p_1,\ldots,p_k$ of $\{1,\ldots,n\}$ whose union is
$\{1,\ldots,n\}$. Partitions are ordered by \emph{refinement}: $p \leq
q$ whenever each $p_i$ is contained in a $q_j$. Ordered this way, the
partitions of $\{1,\ldots,n\}$ form a lattice, called the
\emph{partition lattice}, that we denote by $P(n)$.
It is known when a lattice is (isomorphic to) the partition
lattice $P(n)$. We recall such a characterization below, but first we
briefly have to recall some terminology.

Recall that a lattice is \emph{semimodular} if $a \vee b$ covers $b$
whenever $a$ covers $a \wedge b$. A finite lattice is \emph{geometric} when
it is atomic and semimodular. 
Any geometric lattice has a well-defined \emph{rank} function: $\rk(x)$
is the length of a(ny) chain from $0$ to $x$ in $L$. 
An element $x$ in a lattice is
\emph{modular} when $a \vee (x \wedge y) = (a \vee x) \wedge y$ for
all $a \leq y$. 
% \begin{theorem}\cite{stonesiferbogart:partitionlattices}
%   A lattice $L$ is isomorphic to $\Pi_{n+1}$ if and only if it is
%   geometric, has a modular coatom, and $\mathop{\uparrow} a \cong
%   \Pi_n$ for all atoms $a$.
%   \qed
% \end{theorem}
The \emph{M{\"o}bius function} of a finite lattice is the unique function
$\mu \colon L \to \field{Z}$ satisfying $\sum_{y < x} \mu(x) =
\delta_{0,x}$. It can be defined recursively by $\mu(0)=1$ and $\mu(x)
= -\sum_{y \leq x} \mu(x)$ for $x>0$;
see~\cite{blasssagan:mobius}. The \emph{characteristic 
  polynomial} of a finite lattice $L$ is $\sum_{x \in L} \mu(x) \cdot
\lambda^{\rk(1)-\rk(x)}$. Finally, we write ${\uparrow} x$ for the principal ideal $\{z \in L \mid x \leq z \}$ of $x \in L$.

\begin{theorem}\label{yoon}
  A lattice $L$ is isomorphic to $P(n+1)$ if and only if:
  \begin{enumerate}
  \item[(P1)] it is geometric; 
  \item[(P2)] if $\rk(x)=\rk(y)$, then ${\uparrow} x \cong {\uparrow} y$;
  \item[(P3)] it has a modular coatom;
  \item[(P4)] its characteristic polynomial is $(\lambda-1) \cdots (\lambda-n)$.
  \end{enumerate}
\end{theorem}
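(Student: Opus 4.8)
The plan is to treat the two implications quite differently, since the forward one records standard facts while the reverse is the real content. For $L\cong P(n+1)$, conditions (P1)--(P4) follow by assembling known properties of the partition lattice: $P(n+1)$ is the lattice of flats of the graphic matroid of the complete graph $K_{n+1}$, hence geometric, giving (P1); the upper interval above a partition with $k$ blocks is itself a partition lattice on those blocks, so $\uparrow x$ depends only on the number of blocks, that is, only on $\rk(x)$, giving (P2); the two-block partitions with one singleton block are precisely the modular coatoms, giving (P3); and the characteristic polynomial of $\Pi_{n+1}$ is $(\lambda-1)\cdots(\lambda-n)$ by the classical computation of Rota, giving (P4). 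As this direction is entirely standard, in practice I would simply cite it.

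For the reverse direction I would first use (P1) together with (P2). By (P1), $L$ is the lattice of flats of a simple matroid of rank $\deg\chi_L=n$; reading the coefficient of $\lambda^{n-1}$ off (P4) shows $L$ has exactly $1+2+\cdots+n=\binom{n+1}{2}$ atoms, the number of edges of $K_{n+1}$. Condition (P2) then places $L$ in the very restricted class of \emph{homogeneous} geometric lattices, those in which the isomorphism type of $\uparrow x$ depends only on $\rk(x)$; note already that affine geometries and generic truncations are excluded, since their bottom interval is not isomorphic to the interval above an atom. The role of (P4) is to select the partition lattice inside this class: a Boolean algebra of rank $n$ has characteristic polynomial $(\lambda-1)^n$, and a projective geometry has $\prod_{i}(\lambda-q^i)$ with roots in geometric progression, neither of which equals $(\lambda-1)\cdots(\lambda-n)$ once $n\geq 3$; and the distinct-integer-root factorization in (P4), anchored by the modular coatom of (P3), is exactly the signature of a supersolvable geometric lattice in Stanley's sense, of which partition lattices are the prototype.

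A more hands-on, self-contained route, which I would run in parallel, is induction on $n$ via the modular coatom $z$ from (P3). Stanley's modular factorization theorem gives
\[
\chi_L(\lambda)=\chi_{[\hat 0,z]}(\lambda)\cdot(\lambda-q),
\]
where $q$ is the number of atoms not below $z$ and semimodularity rules out all other $x$ with $x\wedge z=\hat 0$. Comparing with (P4) forces $q\in\{1,\ldots,n\}$ and identifies $\chi_{[\hat 0,z]}(\lambda)=\prod_{i\neq q}(\lambda-i)$; intervals of geometric lattices are geometric, and a modular element above a point stays modular in the corresponding interval, so $[\hat 0,z]$ again satisfies (P1) and (P3), while (P2) is inherited as homogeneity. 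One then hopes to apply the inductive hypothesis to conclude $[\hat 0,z]\cong P(n)$ and reassemble $L$.

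The step I expect to be the main obstacle is precisely this reconstruction: forcing $q=n$ (so that $z$ behaves like the coatom $\{1,\ldots,n\}\mid\{n+1\}$ and the reduced polynomial is exactly $(\lambda-1)\cdots(\lambda-(n-1))$), and then gluing the recovered copy of $P(n)$ to the $q=n$ atoms lying outside $z$—which must play the role of the edges meeting the vertex $n+1$—into an isomorphism $L\cong P(n+1)$ of the whole lattice. It is here that the rigidity of (P4) and the homogeneity (P2) must combine to pin the underlying matroid down to $M(K_{n+1})$, and I would expect the careful verification of this combinatorial reconstruction to be the bulk of the work; it is exactly what the cited characterization packages.
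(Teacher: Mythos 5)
The paper does not prove this statement at all: it is quoted as a known characterization and the proof is a single citation to Yoon's paper on partition lattices, so there is no argument in the paper to compare against step by step. Your forward direction is correct and standard ($P(n+1)$ as the lattice of flats of the graphic matroid of $K_{n+1}$, upper intervals determined by the number of blocks, the singleton-block coatoms being modular, and Rota's computation of the characteristic polynomial), and citing it would be entirely in the spirit of the paper.

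The reverse direction, however, contains genuine gaps that you partly acknowledge and partly pass over. First, in the inductive setup via the modular coatom $z$ and Stanley's factorization $\chi_L=\chi_{[\hat0,z]}\cdot(\lambda-q)$, the claim that $[\hat0,z]$ again satisfies (P2) and (P3) is not justified: condition (P2) for $L$ concerns the intervals $[x,\hat1]$, whereas (P2) for $[\hat0,z]$ concerns the intervals $[x,z]$, and the one does not obviously imply the other; likewise, (P3) for $[\hat0,z]$ requires a modular element of $[\hat0,z]$ that is \emph{covered by $z$}, which does not follow from $L$ having a modular coatom. Without these, the induction does not even get started. Second, and more seriously, the reconstruction step---forcing $q=n$ and gluing the recovered $P(n)$ to the $n$ atoms outside $z$ so as to recover the matroid $M(K_{n+1})$---is exactly the content of the theorem, and you explicitly defer it to the cited source. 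The surrounding remarks (ruling out Boolean algebras and projective geometries by their characteristic polynomials, invoking supersolvability) are sanity checks, not a proof: there are many supersolvable geometric lattices with integer-rooted characteristic polynomials, so (P4) alone does not ``select'' the partition lattice, and the combination with (P2) and (P3) is precisely what needs a careful argument. As it stands, your proposal is an accurate map of where the difficulty lies rather than a proof of the hard direction.
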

\begin{proof}
  See~\cite{yoon:partitionlattices}.
  \qed
\end{proof}

This immediately extends to a characterization of $\Cs(A)$ for
finite-dimensional commutative C*-algebras $A$ (which are always unital).

\begin{corollary}
  A lattice $L$ is isomorphic to $\Cs(A)\op$ for a commutative
  C*-algebra $A$ of dimension $n+1$ if and only if it satisfies (P1)--(P4).
\end{corollary}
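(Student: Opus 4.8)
The plan is to reduce this corollary to Theorem~\ref{yoon} by establishing a lattice isomorphism between $\Cs(A)\op$ and the partition lattice $P(n+1)$ whenever $A$ is a commutative C*-algebra of dimension $n+1$. Once that isomorphism is in place, the statement follows immediately: the conditions (P1)--(P4) characterize $P(n+1)$, so they characterize any lattice order-isomorphic to it, in particular $\Cs(A)\op$.

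The core step is thus purely structural. By finite-dimensional Gelfand duality, a commutative C*-algebra $A$ of dimension $n+1$ is isomorphic to $\field{C}^{n+1}$, i.e.\ to the algebra of functions on an $(n+1)$-point space $X=\{1,\ldots,n+1\}$. I would then show that commutative C*-subalgebras $C$ of $\field{C}^{n+1}$ correspond bijectively to partitions of $X$. Concretely, a (nonzero) subalgebra $C$ is determined by the finest partition of $X$ on which every element of $C$ is constant: each block is a maximal set of coordinates that no element of $C$ separates. Conversely, a partition $p$ of $X$ gives the subalgebra of functions constant on each block of $p$. One checks these assignments are mutually inverse. The key order-reversal to verify is that this bijection sends inclusion of subalgebras to the \emph{reverse} of the refinement order: a larger subalgebra can separate more points, hence corresponds to a \emph{finer} partition, which is why we land in $\Cs(A)\op$ rather than $\Cs(A)$. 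Here I would also attend to the (dimension-off-by-one and unitality) bookkeeping: without the unit requirement the subalgebra corresponding to a partition with $k$ blocks has dimension $k$, and the coarsest nontrivial subalgebras correspond to the top of $P(n+1)$, fixing the indexing $P(n+1)$ rather than $P(n)$.

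The remaining step is to confirm that the bijection is in fact a lattice isomorphism onto $P(n+1)$ with the refinement order (so an anti-isomorphism from $\Cs(A)$), which amounts to observing that meets and joins of subalgebras match, under the correspondence, with the lattice operations on partitions. Since the correspondence is already an order isomorphism between posets and both sides are lattices, this is automatic once the poset isomorphism $\Cs(A)\op \cong P(n+1)$ is established.

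The main obstacle, I expect, is not any deep argument but rather the careful handling of the nonunital convention and the indexing shift. The paper explicitly does not require C*-algebras to carry a unit, so I must make sure the collection of subalgebras is correctly matched with \emph{all} partitions (including or excluding the trivial ones consistently), and that the dimension $n+1$ corresponds to $P(n+1)$ and not $P(n)$. Once that dictionary is pinned down, the corollary is a direct transport of Theorem~\ref{yoon} across the anti-isomorphism $\Cs(A)\op\cong P(n+1)$, so the proof can be stated quite briefly.
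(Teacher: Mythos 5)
Your proposal is correct and follows essentially the same route as the paper: both reduce the corollary to Theorem~\ref{yoon} by identifying $\Cs(A)\op$ with $P(n+1)$ via (finite) Gelfand duality, the paper phrasing this abstractly as an anti-isomorphism between subalgebras of $A$ and quotients of the $(n+1)$-point spectrum, and you spelling out the same correspondence concretely in coordinates of $\field{C}^{n+1}$. Your attention to the order reversal and the unitality convention is exactly the bookkeeping the paper handles by working in the category of \emph{unital} $*$-homomorphisms.
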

\begin{proof}
  The lattice $\Cs(A)$ is that of subobjects of $A$ in the category of
  finite-dimensional commutative C*-algebras and unital
  $*$-homomorphisms. Recall that a \emph{subobject} is an equivalence
  class of monomorphisms into a given object, where two monics are identified
  when they factor through one another by an isomorphism. The dual
  notion is a \emph{quotient}: an equivalence class of epimorphisms out of a
  given object. By Gelfand duality, $\Cs(A)$ is isomorphic to the opposite
  of the lattice of quotients of the discrete topological space
  $\Spec(A)$ with $n+1$ points. But the latter is precisely $P(n+1)\op$.
  \qed
\end{proof}

\subsection*{Symmetric group actions}
The appropriate group to consider is the symmetric group $S(n)$ of all
permutations $\pi$ of $\{1,\ldots,n\}$. The group $S(n)$ acts on
$P(n)$. Explicitly,
$\pi p = (\pi p_1,\ldots,\pi p_k)$ for $p=(p_1,\ldots,p_k) \in P(n)$
and $\pi \in S(n)$, where $\pi p_l = \{ \pi(i) \mid i \in \pi_l
\}$. 
That is, one works in the quotient group of $S(n)$ by the Young subgroups $S(n_1) \times \cdots \times S(n_k)$, where the $n_l$ ares the cardinality of the parts $p_l$ of the partition $p$.
The following lemma might be considered the main insight of this
article. 

\begin{lemma}\label{lem:charcommfinite}
  If $A$ is a commutative C*-algebra of dimension $n$, then there is
  an isomorphism $\Cm(A)\op \cong P(n) \rtimes S(n)$ of categories.
\end{lemma}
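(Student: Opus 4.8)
The plan is to exhibit an explicit isomorphism of categories $\Cm(A)\op \cong P(n) \rtimes S(n)$ by translating everything through Gelfand duality to the discrete spectrum $X = \Spec(A)$, a set with $n$ points. Since $A$ is a commutative C*-algebra of dimension $n$, we have $A \cong \field{C}^n \cong C(X)$, and the objects of $\Cm(A)$ (nonzero commutative C*-subalgebras) correspond, by the finite-dimensional Gelfand duality used in the preceding corollary, to quotients of $X$, i.e.\ to partitions $p \in P(n)$: a subalgebra $C \subseteq A$ is the algebra of functions constant on the blocks of its associated partition. This gives the bijection on objects, and I would let $\beta(C)$ denote the corresponding partition. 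The subtlety that must be checked at the outset is that nonzero subalgebras which need not contain the unit still correspond to genuine partitions; here one uses that a commutative C*-subalgebra of $\field{C}^n$ is a product of copies of $\field{C}$ indexed by blocks, so the support and block structure together recover a partition of $X$.

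Next I would analyze the morphisms. An injective $*$-homomorphism $i \colon C \rightarrowtail C'$ in $\Cm(A)$ corresponds, under Gelfand duality, to a surjection between the spectra of $C'$ and $C$, that is, between the block sets of the partitions $\beta(C')$ and $\beta(C)$. The key point is that because $A$ is finite-dimensional and commutative (in fact topologically the matter reduces to finite sets), every such spectral surjection is induced by an honest permutation $\pi \in S(n)$ of the underlying $n$ points: the permutation rearranges the points so that the coarser partition refines the image of the finer one. Concretely, a morphism $C \rightarrowtail C'$ should be recorded by the permutation $\pi$ together with the refinement relation it witnesses, and this is exactly the data of a morphism $\beta(C) \to \beta(C')$ in $P(n) \rtimes S(n)$, namely an element $\pi \in S(n)$ with $\beta(C) \leq \pi\,\beta(C')$ in the refinement order. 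Thus the assignment $i \mapsto \pi$ (appropriately normalized) is the morphism part of the functor, and the variance is reversed precisely because surjections of spectra run opposite to injections of algebras, explaining the $\op$.

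The remaining work is to verify that this assignment is a well-defined bijective-on-homsets functor and that composition and identities match. I would check that the identity subalgebra inclusion maps to the identity element of $S(n)$ under the semidirect-product composition law $(n,g)\circ(m,f) = (mn,(mg)f)$ specialized to the poset case, and that composing two injective $*$-homomorphisms multiplies the associated permutations in the correct order in $S(n)$ while composing the refinement witnesses; this is where the monoid structure of $S(n)$ and the $G\op$-twist in the definition of $P \rtimes G$ must be reconciled, and I expect some care with variance to be needed. Faithfulness and fullness on each homset amount to showing that a morphism $C \rightarrowtail C'$ is determined by, and determines, the permutation $\pi$ subject to the refinement constraint $\beta(C) \leq \pi\,\beta(C')$.

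The main obstacle I anticipate is not the bijection on objects but the bookkeeping of variance and the faithfulness/fullness of the morphism correspondence: one must confirm that distinct injective $*$-homomorphisms $C \rightarrowtail C'$ give distinct permutations and that every permitted $\pi$ (i.e.\ every $\pi$ with $\beta(C) \leq \pi\,\beta(C')$) arises from a genuine injective $*$-homomorphism. This hinges on the finite-dimensional, commutative structure: an injective $*$-homomorphism between finite products of copies of $\field{C}$ is exactly a coordinate relabeling that is compatible with the block structures, so it is pinned down by a permutation of points together with the block-refinement it induces. Once this dictionary between injective $*$-homomorphisms and (permutation, refinement) pairs is established, matching it to the explicit description of morphisms in $P(n) \rtimes S(n)$ and checking the composition law completes the isomorphism of categories.
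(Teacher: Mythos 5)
Your proposal follows essentially the same route as the paper's proof: reduce to $A=\field{C}^n$, identify the commutative C*-subalgebras with partitions of the $n$-point spectrum, and identify an injective $*$-homomorphism with a permutation $\pi\in S(n)$ witnessing the refinement condition, with the $\op$ accounting for the reversal of variance under Gelfand duality. The steps you defer to verification --- that every injective $*$-homomorphism between such subalgebras is induced by a permutation of the $n$ points, and the bookkeeping of composition in $P(n)\rtimes S(n)$ --- are precisely the ones the paper itself dispatches in a single line, so your outline does not diverge from the published argument.
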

\begin{proof}
  We may assume that $A=\mathbb{C}^n$. Objects $C$ of $\Cm(A)$ then
  are of the form $C=\{(x_1,\ldots,x_n) \in \mathbb{C}^n \mid \forall
  k \forall i,j \in p_k \colon x_i = x_j \}$ for some partition
  $p=(p_1,\ldots,p_l)$ of $\{1,\ldots,n\}$. But these are precisely
  the objects of $P(n)$, and hence of $P(n) \rtimes S(n)$.

  If $f \colon C' \to C$ is a morphism of $\Cm(A)$, \ie an injective
  $*$-homomorphism, then $f(C') \subseteq C$ is a C*-subalgebra. Say $C'
  = \{ x \in \mathbb{C}^n \mid \forall k \forall i,j \in p'_k \colon
  x_i = x_j \}$ for a partition $p'=(p'_1,\ldots,p'_{l'})$. Then we
  see that $f$ must be induced by an injective function
  $\{1,\ldots,n\} \to \{1,\ldots,n\}$, which we can extend to a
  permutation $\pi \in S(n)$. Then $C' \to C$ means that $\pi p' \leq
  p$. But this is precisely a morphism in $(P(n) \rtimes S(n))\op$.
  \qed
\end{proof}

\subsection*{Terminal subalgebras}

A maximal abelian subalgebra $D$ of a C*-algebra $A$ is a maximal
element in $\Cs(A)$. If $A$ is finite-dimensional, such $D$ are unique
up to conjugation with a unitary.

The prime example is the following:
if $A$ is the C*-algebra $\mathbb{M}_n(\mathbb{C})$ of $n$-by-$n$ complex
matrices, then maximal abelian subalgebras $D$ are precisely the
subalgebras consisting of all matrices that are diagonal in some fixed basis. 

In finite dimension, maximal elements of $\Cs(A)$ are the same as
terminal objects of $\Cm(A)$. For the following
lemma, weakly terminal objects of $\Cs(A)$ are in fact enough. Recall
that an object $D$ is weakly terminal when every object $C$ allows a
morphism $C \to D$.

\begin{lemma}\label{terminal}
  If $\Cm(A)$ has a weak terminal object $D$, then there is an
  equivalence $\Cm(A) \simeq \Cm(D)$ of categories.
\end{lemma}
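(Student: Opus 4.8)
The plan is to build an equivalence of categories directly from the weak terminal object $D$, exploiting the fact that every commutative subalgebra $C$ of $A$ admits an injective $*$-homomorphism into $D$. The key structural observation is that the subalgebras $C$ of $A$ are, up to isomorphism, exactly the subalgebras of $D$: if $D$ is weakly terminal, every $C \in \C(A)$ embeds into $D$, and conversely every subalgebra of $D$ is itself an object of $\C(A)$. So the objects of $\Cm(D)$ and $\Cm(A)$ coincide up to isomorphism, which is the raw material for an equivalence. The subtlety is that $\Cm(A)$ has \emph{more} objects (those not literally contained in $D$), so the equivalence cannot be an isomorphism; it will be a genuinely weak equivalence, and I expect to construct it as a functor $\Cm(A) \to \Cm(D)$ together with a check of essential surjectivity, fullness, and faithfulness.

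First I would fix, for each object $C$ of $\Cm(A)$, a chosen injective $*$-homomorphism $e_C \colon C \rightarrowtail D$ provided by weak terminality, taking $e_D = \id[D]$. Define a functor $E \colon \Cm(A) \to \Cm(D)$ by sending $C$ to its image $e_C(C)$, which is a genuine C*-subalgebra of $D$ by Lemma~\ref{cstarimage}, and hence an object of $\Cm(D)$. On a morphism $f \colon C \rightarrowtail C'$ in $\Cm(A)$, the two embeddings $e_C$ and $e_{C'} \after f$ both inject $C$ into $D$; since $e_C$ is injective onto its image $e_C(C)$, I can transport $f$ to the map $e_{C'} \after f \after (e_C)^{-1} \colon e_C(C) \rightarrowtail e_{C'}(C')$, which is a well-defined injective $*$-homomorphism between subalgebras of $D$, i.e.\ a morphism of $\Cm(D)$. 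Functoriality is then a routine verification that these transported maps respect composition and identities, which follows because the chosen embeddings cancel telescopically.

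Next I would verify the three conditions for $E$ to be an equivalence. Essential surjectivity is immediate: any object of $\Cm(D)$ is a subalgebra of $D$, hence an object of $\Cm(A)$, and its image under $E$ is isomorphic (indeed equal, if one chooses the embedding of a subalgebra of $D$ to be its inclusion) to itself. Faithfulness follows because $e_C$ is injective, so $f$ is recoverable from its transport $e_{C'} \after f \after (e_C)^{-1}$. Fullness is where I must argue that every morphism $g \colon e_C(C) \rightarrowtail e_{C'}(C')$ in $\Cm(D)$ arises from some $f \colon C \rightarrowtail C'$ in $\Cm(A)$; here I take $f = (e_{C'})^{-1} \after g \after e_C$, using that $e_{C'}$ is an isomorphism onto its image $e_{C'}(C')$ so that its inverse is defined on the codomain of $g$.

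The main obstacle I anticipate is precisely this last point: showing that $e_{C'}$ is invertible on the relevant subalgebra, which requires knowing that the injective $*$-homomorphism $e_{C'}$ restricts to a $*$-isomorphism $C' \to e_{C'}(C')$. This is true because an injective $*$-homomorphism of C*-algebras is automatically isometric and hence a homeomorphism onto its (closed) image, but I would want to invoke Lemma~\ref{cstarimage} carefully to be sure the image is a genuine C*-subalgebra of $D$ rather than merely a $*$-closed subspace. A secondary delicacy is the dependence on the arbitrary choices $e_C$: different choices give naturally isomorphic functors, so the equivalence is canonical only up to natural isomorphism, but since we only claim an equivalence $\Cm(A) \simeq \Cm(D)$ and not an isomorphism, this causes no difficulty.
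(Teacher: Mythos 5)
Your proof is correct, and every step you flag as a potential obstacle does go through: the corestriction of an injective $*$-homomorphism to its image (a genuine C*-subalgebra by Lemma~\ref{cstarimage}) is a $*$-isomorphism, so the transported morphisms $e_{C'} \after f \after (e_C)^{-1}$ are well defined, the telescoping gives functoriality, and fullness and faithfulness follow as you say. However, you have built the equivalence in the harder direction. The paper goes the other way: it observes that $\Cm(D)$ is a \emph{full} subcategory of $\Cm(A)$ (its objects are commutative C*-subalgebras of $D$, hence of $A$, and the morphisms between two such objects are the same injective $*$-homomorphisms in either category), so the inclusion functor $\Cm(D) \hookrightarrow \Cm(A)$ is automatically full and faithful, and the only thing left to check is essential surjectivity --- which is exactly your key observation that weak terminality gives $C \cong f(C) \in \Cm(D)$. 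That packaging makes the proof a one-liner and avoids the global choice of embeddings $e_C$ (and hence any appeal to choice, and the verification that different choices give naturally isomorphic functors). Your explicit quasi-inverse is what one obtains by unwinding the standard proof that a full, faithful, essentially surjective functor is an equivalence; it buys you a concrete functor $\Cm(A) \to \Cm(D)$, at the cost of extra bookkeeping that the statement does not require.
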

\begin{proof}
  Clearly the inclusion $\Cm(D) \hookrightarrow \Cm(A)$ is a full and
  faithful functor, so it suffices to prove that it is essentially
  surjective. Let $C \in \Cm(A)$. Then there exists an injective
  $*$-homomorphism $f \colon C \to D$ because $D$ is weakly
  terminal. Hence $C \cong f(C) \in \Cm(D)$.  
  \qed
\end{proof}

\subsection*{The characterization}

We can now bring all the pieces together.

\begin{theorem}\label{charffactor}
  For a category $\cat{A}$, the following are equivalent:
  \begin{itemize}
  \item the category $\cat{A}$ is equivalent to $\Cm(\mathbb{M}_n(\field{C}))\op$;
  \item the category $\cat{A}$ is equivalent to $P(n) \rtimes S(n)$;
  \item 
    $\cat{A}$ satisfies (A1)--(A6), and \\
    \hspace*{2mm}$(\cat{A}, \leq)$ satisfies (P1)--(P4) for $n-1$, and \\
    \hspace*{2mm}$\cat{A}(0,0)\op$ is isomorphic to the symmetric group on $n$ elements. 
  \end{itemize}
\end{theorem}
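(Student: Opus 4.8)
The plan is to prove the cycle by first collapsing conditions (1) and (2) into a single statement and then handling the abstract characterization (3) against (2). The first step is to show directly that $\Cm(M_n(\field{C}))\op \simeq P(n) \rtimes S(n)$. The diagonal subalgebra $D \cong \field{C}^n$ is a weak terminal object of $\Cm(M_n(\field{C}))$: every commutative C*-subalgebra of $M_n(\field{C})$ consists of commuting normal matrices, hence is simultaneously diagonalizable and so embeds into $D$. Lemma~\ref{terminal} then gives $\Cm(M_n(\field{C})) \simeq \Cm(D)$, and since $D$ is commutative of dimension $n$, Lemma~\ref{lem:charcommfinite} gives $\Cm(D)\op \cong P(n) \rtimes S(n)$. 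Passing to opposites yields $\Cm(M_n(\field{C}))\op \simeq P(n) \rtimes S(n)$; as both (1) and (2) assert equivalence of $\cat{A}$ to one of these two (equivalent) fixed categories, transitivity makes (1) and (2) interchangeable.

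Next I would prove (2)$\Rightarrow$(3). By the Example showing that $P \rtimes G$ satisfies (A1)--(A6) whenever $P$ has a least element, and since $P(n)$ has the finest partition as its least element, the category $P(n) \rtimes S(n)$ satisfies (A1)--(A6); these conditions are straightforwardly invariant under equivalence, so $\cat{A}$ satisfies them. The recovered preorder of (A4) is $P(n)$, which by Theorem~\ref{yoon} with parameter $n-1$ satisfies (P1)--(P4). Finally the endohomset $\cat{A}(0,0)$ is isomorphic to $S(n)\op$ by (A3), so $\cat{A}(0,0)\op \cong S(n)$, giving all three bullets of (3).

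For the converse (3)$\Rightarrow$(2), Theorem~\ref{charPG} applies to yield an equivalence $\cat{A} \simeq P \rtimes G$ for the recovered poset $P$, group $G$, and the action of Lemma~\ref{recoverGaction}. Conditions (P1)--(P4) for $n-1$ together with Theorem~\ref{yoon} identify $P \cong P(n)$, while (A3) and the hypothesis on $\cat{A}(0,0)\op$ give $G \cong S(n)$. What remains, and what I expect to be the main obstacle, is to identify the recovered action with the \emph{standard} action of $S(n)$ on $P(n)$: the axioms (A1)--(A6) recover \emph{some} action, but the category $P(n) \rtimes G$ genuinely depends on which one, so knowing $P$ and $G$ separately is not yet enough. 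The key external input is the rigidity of the partition lattice, namely that $\Aut(P(n)) \cong S(n)$ with every order-automorphism induced by a unique permutation. Since a group action is by order-automorphisms, the recovered action is a homomorphism $\rho \colon G \to \Aut(P(n)) \cong S(n)$; once $\rho$ is shown faithful it is an isomorphism by counting, and one may relabel the identification $G \cong S(n)$ so that $\rho$ becomes the standard action, completing $\cat{A} \simeq P(n) \rtimes S(n)$. The delicate point is exactly the faithfulness of $\rho$: it does not follow formally from (A1)--(A6) (indeed the Example shows even trivial actions satisfy them), and so must be extracted from the concrete structure of the situation at hand.
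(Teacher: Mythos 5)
Your architecture coincides with the paper's: the published proof is the single line ``combine the previous two lemmas with Theorem~\ref{charPG} and Theorem~\ref{yoon}'', and your treatment of $(1)\Leftrightarrow(2)$ (the diagonal masa is weakly terminal, then Lemma~\ref{terminal} followed by Lemma~\ref{lem:charcommfinite}) and of $(2)\Rightarrow(3)$ (the Example plus Theorem~\ref{yoon}) is exactly what that line unpacks to. Where you go beyond the paper is in $(3)\Rightarrow(2)$: you observe that Theorem~\ref{charPG} only yields $\cat{A}\simeq P\rtimes G$ for the \emph{recovered} action of Lemma~\ref{recoverGaction}, and that knowing $P\cong P(n)$ and $G\cong S(n)$ separately does not determine that action. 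You are right to worry, and your reduction is the correct one: since $\Aut(P(n))\cong S(n)$ with every lattice automorphism induced by a permutation, any \emph{faithful} action $\rho\colon G\to\Aut(P(n))$ differs from the standard one by an automorphism of $S(n)$, and relabelling morphisms along that automorphism gives $P(n)\rtimes_{\rho}S(n)\cong P(n)\rtimes S(n)$.

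However, you leave faithfulness unproven, and it genuinely does not follow from the listed hypotheses. The trivial action of $S(n)$ on $P(n)$ yields a category $\cat{B}$ that satisfies (A1)--(A6) (by the Example), whose recovered preorder is $P(n)$ (so (P1)--(P4) hold for $n-1$), and whose endohomset at $0$ is $S(n)\op$, since every permutation fixes the discrete partition; yet $\cat{B}(p,q)=\emptyset$ for incomparable $p,q$, whereas $P(n)\rtimes S(n)$ has morphisms between any two partitions of the same shape (take $n=3$ and the partitions $\{1,2\}\{3\}$ and $\{1,3\}\{2\}$). So $\cat{B}$ satisfies all three bullets of condition (3) without being equivalent to $P(n)\rtimes S(n)$, and the implication $(3)\Rightarrow(2)$ fails as literally stated. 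This is a defect of the theorem's formulation and of the paper's one-line proof as much as of your attempt: repairing it requires an additional condition forcing the recovered action to be faithful (equivalently, guaranteeing enough isomorphisms between distinct objects of equal rank), after which your rigidity argument does complete the proof. As it stands, your proposal establishes $(1)\Leftrightarrow(2)$ and $(2)\Rightarrow(3)$ but, like the paper, not $(3)\Rightarrow(2)$.
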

\begin{proof}
  Combine the previous two lemmas with Theorem~\ref{charPG} and
  Theorem~\ref{yoon}.
  \qed
\end{proof}

We can actually do better than characterizing
factors $A=\mathbb{M}_n(\field{C})$ of type $\mathrm{I}_n$: the next theorem characterizes $\Cm(A)$ for any
finite-dimensional C*-algebra $A$.

\begin{lemma}\label{directsums}
  If $\Cm(A_i)$ has a weak terminal object $D_i$ for each $i$ in a set
  $I$, then the C*-direct sum $\bigoplus_{i \in I} D_i$ is a weak terminal object in
  $\Cm(\bigoplus_{i \in I} A_i)$.   
\end{lemma}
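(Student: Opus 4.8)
The plan is to exhibit, for an arbitrary object $C$ of $\Cm(\bigoplus_{i} A_i)$, an explicit injective $*$-homomorphism into $D := \bigoplus_{i} D_i$, built coordinatewise out of the weak terminality of each $D_i$. First I would record that $D$ is a genuine object of $\Cm(\bigoplus_{i} A_i)$: since each $D_i$ is a commutative C*-subalgebra of $A_i$, the C*-direct sum $\bigoplus_{i} D_i$ (the $c_0$-sum of families $(d_i)$ with $d_i \in D_i$ and $\|d_i\| \to 0$) is a norm-closed commutative $*$-subalgebra of $\bigoplus_{i} A_i$, and it is nonzero because the $D_i$ are.

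Now let $C \in \Cm(\bigoplus_{i} A_i)$ be arbitrary, and for each $i$ let $\pi_i \colon \bigoplus_{i} A_i \to A_i$ be the coordinate projection. Restricting $\pi_i$ to $C$ gives a $*$-homomorphism $C \to A_i$ whose image $\pi_i(C)$ is, by Lemma~\ref{cstarimage}, a commutative C*-subalgebra of $A_i$. When $\pi_i(C)$ is nonzero it is an object of $\Cm(A_i)$, so weak terminality of $D_i$ provides an injective $*$-homomorphism $f_i \colon \pi_i(C) \to D_i$; when $\pi_i(C) = \{0\}$ I simply take $f_i$ to be the zero map. I would then define $f \colon C \to D$ by $f(c) = (f_i(\pi_i(c)))_i$, which is visibly a $*$-homomorphism since each component $f_i \after \pi_i$ is one.

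It remains to check that $f$ is well-defined with target $D$ and is injective. For injectivity, suppose $f(c)=0$; then $f_i(\pi_i(c))=0$ for every $i$, and since each $f_i$ is injective this forces $\pi_i(c)=0$ for all $i$, whence $c=0$ because the $\pi_i$ jointly separate the points of $\bigoplus_{i} A_i$. The step I expect to be the crux is verifying that $f(c)$ actually lies in the $c_0$-sum $D$ rather than merely in the algebraic product $\prod_i D_i$: this is exactly where I would invoke that an injective $*$-homomorphism of C*-algebras is isometric, so that $\|f_i(\pi_i(c))\| = \|\pi_i(c)\| \to 0$, using that $c = (\pi_i(c))_i$ already satisfies the $c_0$-condition in $\bigoplus_{i} A_i$. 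The only bookkeeping needed is for the indices with $\pi_i(C)=\{0\}$, where the zero map is trivially isometric on its (zero) domain, so the estimate is unaffected; this also disposes of the convention that objects of $\Cm$ must be nonzero. Exhibiting such an $f \colon C \to D$ for every $C$ establishes that $D$ is weakly terminal.
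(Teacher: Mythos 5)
Your proof is correct and follows essentially the same route as the paper: project $C$ onto each coordinate, apply weak terminality of $D_i$ to $\pi_i(C)$, and assemble the resulting maps into an injective $*$-homomorphism $C \to \bigoplus_i D_i$. You supply details the paper leaves implicit (injectivity of the assembled map, the $c_0$-condition via isometry of injective $*$-homomorphisms, and the zero-component bookkeeping), but the underlying argument is the same.
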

\begin{proof}
  Let $C \in \C(\bigoplus_{i \in I} A_i)$. 
  % By Zorn's lemma, there
  % exists a maximal $D$ in $\Cs(\bigoplus_{i \in I} A_i)$ containing
  % $C$. Now $\pi_i(D)$ is maximal in $\Cs(A_i)$ for each $i \in I$. For
  % if $\pi_i(D) \subseteq C_i \in \C(A_i)$, then $D \subseteq C_i
  % \oplus \bigoplus_{j \neq i} \pi_j(D) \in \C(\bigoplus_{i \in I} A_i)$. But
  % $D$ is maximal, so $D=C_i \oplus \bigoplus_{j \neq i} \pi_j(D)$, and
  % $C_i = \pi_i(D)$. Hence $\pi_i(D)$ is maximal in $\Cs(A_i)$. 
  % In total, this gives $D=\bigoplus_{i \in I} \pi_i(D)$. Now, because $D_i$
  % is weakly terminal, there exist morphisms $\pi_i(D) \to D_i$ in
  % $\Cm(A_i)$. Therefore, there is a morphism $D \to \bigoplus_{i \in
  % I} D_i$ in $\Cm(\bigoplus_{i \in I} A_i)$. Thus, there exists a
  % morphism $C \to \bigoplus_{i \in I} D_i$ in $\Cm(\bigoplus_{i \in I}
  % A_i)$. That is, $\bigoplus_{i \in I} D_i$ is weakly terminal in
  % $\Cm(\bigoplus_{i \in I} A_i)$.
  Then $C$ is contained in the commutative subalgebra $\bigoplus_{i
    \in I} \pi_i(C)$ of $\bigoplus_{i \in I} A_i$. Because each $D_i$
  is weakly terminal, there exist morphisms $f_i \colon \pi_i(C) \to
  D_i$. Therefore $\bigoplus_{i \in I} f_i $ is a morphism
  $\bigoplus_{i \in I} \pi_i(C) \to \bigoplus_{i \in I} D_i$, 
  and thus the latter is weakly terminal in $\Cm(\bigoplus_{i \in I} A_i)$.
  \qed
\end{proof}

\begin{theorem}\label{charfinite}
  A category $\cat{A}$ is equivalent to $\Cm(A)\op$ for a
  finite-dimensional C*-algebra $A$ if and only if there are
  $n_1,\ldots,n_k \in \field{N}$ such that:
  \begin{itemize}
  \item $\cat{A}$ satisfies (A1)--(A5) and (A6');
  \item $(\cat{A},\leq)$ satisfies (P1)--(P4) for $(\sum_{i=1}^k n_i)-1$;
  \item $\cat{A}(0,0)\op$ is isomorphic to the symmetric group on
    $\sum_{i=1}^k n_i$ elements;
  \item $\sum_{i=1}^k n_i^2 = \dim(A)$.
  \end{itemize}
\end{theorem}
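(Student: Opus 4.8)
The plan is to combine the Wedderburn structure theorem with the three reduction lemmas already in hand. Every finite-dimensional C*-algebra decomposes as $A \cong \bigoplus_{i=1}^k M_{n_i}(\field{C})$, whence $\dim(A) = \sum_{i=1}^k n_i^2$; this supplies both the integers $n_1,\ldots,n_k$ and the final dimension equation at no cost. Writing $N = \sum_{i=1}^k n_i$, the crux is to establish $\Cm(A)\op \cong P(N) \rtimes S(N)$, after which the remaining conditions read off exactly as in the single-factor Theorem~\ref{charffactor}.

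For the forward implication I would first locate the weakly terminal commutative subalgebra. In each factor $M_{n_i}(\field{C})$ the diagonal masa $\field{C}^{n_i}$ is weakly terminal in $\Cm(M_{n_i}(\field{C}))$, since every commutative subalgebra embeds into a masa and masas are unique up to conjugation. Lemma~\ref{directsums} assembles these into a weakly terminal object $D = \bigoplus_{i=1}^k \field{C}^{n_i} \cong \field{C}^N$ of $\Cm(A)$; Lemma~\ref{terminal} then yields $\Cm(A) \simeq \Cm(D)$; and Lemma~\ref{lem:charcommfinite}, applied to the commutative algebra $D$ of dimension $N$, gives $\Cm(D)\op \cong P(N) \rtimes S(N)$. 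Composing, $\cat{A} \simeq \Cm(A)\op \cong P(N) \rtimes S(N)$. The Example following the definition of poset-group-amalgamation, together with Theorem~\ref{yoon}, then certifies that $\cat{A}$ satisfies (A1)--(A5) and (A6'), that $\cat{A}(0,0)\op \cong S(N)$, and that the induced poset $P(N)$ satisfies (P1)--(P4) for $N-1$.

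For the converse, given $n_1,\ldots,n_k$ and a category $\cat{A}$ meeting the four conditions, I would set $N = \sum_{i=1}^k n_i$ and $A = \bigoplus_{i=1}^k M_{n_i}(\field{C})$, so that $\dim(A) = \sum_{i=1}^k n_i^2$ matches the last hypothesis. Feeding (A1)--(A5) and (A6'), the isomorphism $\cat{A}(0,0)\op \cong S(N)$, and (P1)--(P4) for $N-1$ into Theorem~\ref{charPG} and Theorem~\ref{yoon} produces $\cat{A} \simeq P(N) \rtimes S(N)$, and traversing the chain of equivalences above in reverse identifies this with $\Cm(A)\op$.

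The genuinely new point---and the main obstacle to a clean statement---is that $\Cm(A)\op$ sees $A$ only through $N = \sum_i n_i$: the equivalence forgets how $N$ is partitioned into block sizes. Consequently, distinct algebras yield equivalent categories; for $N = 4$, say, the five algebras $\field{C}^4$, $M_2(\field{C}) \oplus \field{C}^2$, $M_2(\field{C})^{\oplus 2}$, $M_3(\field{C}) \oplus \field{C}$ and $M_4(\field{C})$ all satisfy $\Cm(A)\op \cong P(4) \rtimes S(4)$ while their dimensions $4, 6, 8, 10, 16$ differ. The categorical data alone therefore cannot recover $\dim(A)$, which is exactly why the characterization must be phrased existentially over the decomposition $n_1,\ldots,n_k$, the equation $\sum_i n_i^2 = \dim(A)$ serving only to select the intended algebra once $N$ is fixed. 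The sole remaining care is the passage from (A6) to its direct-sum variant (A6') recorded before the theorem, whose role is merely to keep Theorem~\ref{charPG} applicable to the amalgamation arising from the decomposed $D$.
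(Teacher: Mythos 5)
Your proposal is correct and follows essentially the same route as the paper: Wedderburn decomposition, then Lemmas~\ref{directsums} and~\ref{terminal} to reduce to the weakly terminal commutative subalgebra $\field{C}^{\sum_i n_i}$, then Lemma~\ref{lem:charcommfinite} and Theorem~\ref{charffactor} to conclude. Your closing observations about the equivalence forgetting the block structure and the role of the dimension equation are accurate commentary rather than a deviation in method.
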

\begin{proof}
  Every finite-dimensional C*-algebra $A$ is isomorphic to a matrix realization of the form $\bigoplus_{i=1}^k \mathbb{M}_{n_i}(\field{C})$ with $n=\sum_{i=1}^k
  n_i^2$~\cite[Theorem~III.1.1]{davidson:cstar}. By
  Lemmas~\ref{terminal} and~\ref{directsums}, we have 
  \[
  \Cm(A) 
  \simeq \Cm(\bigoplus_{i=1}^k \field{C}^{n_i}) 
  \cong \Cm(\field{C}^{(\sum_{i=1}^k n_i)}).
  \]
  So by Lemma~\ref{lem:charcommfinite}, $\Cm(A)\op \simeq P(m) \rtimes S(m)$ for
  $m=\sum_{i=1}^k n_i$. Now the
  statement follows from Theorem~\ref{charffactor}.
  \qed
\end{proof}

Notice that by Lemma~\ref{directsums} we may indeed use the whole partition lattice $P(m)$ in the previous theorem instead of the truncated one $P(n_1) \times \cdots \times P(n_k)$; this is one of the consequences of working with equivalences of categories instead of isomorphisms.

\section{Poset-monoid-amalgamations}\label{sec:monoidamalgamations}

The main idea of our characterization of $\Cm(A)$ for
finite-dimensional C*-algebras $A$ holds unabated in the
infinite-dimensional case. However, the technical implementation of
the idea needs some adapting. For example, the appropriate monoid is
no longer a group. Therefore, we will have to refine axiom (A6) into
(A6') and (A7') as follows. We re-list the other axioms for convenience.

\begin{definition}
  A category $\cat{A}$ is called a \emph{poset-monoid-amalgamation}
  when there exist a partial order $P$ and a monoid $M$ such that:
  \begin{enumerate}
  \item[(A1')] there is a weak initial object $0$, unique up to isomorphism;
  \item[(A2')] there is a faithful retraction $F$ of the inclusion
    $\cat{A}(0,0) \hookrightarrow \cat{A}$;
  \item[(A3')] there is an isomorphism $\alpha \colon \cat{A}(0,0) \to M\op$ of
    monoids;
  \item[(A4')] there is an equivalence $\smash{\xymatrix@1{(\cat{A},\leq)
        \ar@<1ex>|-{\beta}[r] & P \ar@<.5ex>|-{\beta'}[l]}}$ of preorders;
  \item[(A5')] for each object $x$ there is an isomorphism $f \colon x
    \to \beta'(\beta(x))$ with $\alpha F(f)=1$;
  \item[(A6')] for each object $y$ and $m \colon 0 \to 0$,
    there is $f \colon x \to y$ such that $F(f)=m$, that is universal
    in the sense that $f'=fg$ with
    $F(g)=1$ for any $f' \colon x' \to y$ with $F(f')=m$; 
  \item[(A7')] if $F(f)=m_2m_1$ for a morphism $f$, then $f=f_1f_2$ with $F(f_i)=m_i$.
  \end{enumerate}
\end{definition}

The idea behind axiom (A6') is that in $P \rtimes M$, we can access
the object $mq$ through the morphisms $m \colon p \to q$. There is
always at least one such morphism, namely $m \colon mq \to q$, because
trivially $mq \leq mq$. This might not be an isomorphism, but it still
has the universal property that all other morphisms $m \colon p \to
q$ factor through it.
We can rephrase this universality as follows: for each object $y$ of
$P \rtimes M$ and $m \in M$, there is a maximal element of the set $\{ f
\colon x \to y \mid \alpha(F(f))=m\}$, preordered by $f \leq g$ iff $f=hg$ for
some morphism $h$ satisfying $\alpha(F(h))=1$.
\[\xymatrix{
  x \ar^-{f}[r] & y \\
  z \ar_-{g}[ur] \ar@{-->}^-{h}[u]
}\]
Also, notice the swap in (A7'). It is caused by the contravariance in the
composition of $P \rtimes M$ and (A3'), and is not a mistake, as the
following example shows.

\begin{example}
  If $P$ is a partial order with least element, and $M$ is a
  monoid acting on $P$, then $P \rtimes M$ is a poset-monoid-amalgamation.
\end{example}
\begin{proof}
  The least element $0$ of $P$ is a weak initial object, satisfying
  (A1'). Conditions (A2')--(A4') are satisfied by definition, and (A5') is vacuous.  
  To verify (A6') for $q \in P$ and $m \in M$, notice that $mq \leq
  mq$, and if $p \leq mq$, then certainly $p \leq 1mq$. Finally, (A7')
  means that if $p \leq m_2 m_1 r$, we should be able to provide $q$
  such that $p \leq m_2 q$ and $q \leq m_1 r$; taking $q=m_1 r$
  suffices. 
  \qed
\end{proof}

\begin{lemma}\label{recoverMaction}
  If $\cat{A}$ satisfies (A1')--(A7'), then it induces an 
  action of $M$ on $P$ given by $pm = \beta(x)$ if $f \colon x \to
  \beta'(p)$ is a maximal element with $\alpha(F(f))=m$.
\end{lemma}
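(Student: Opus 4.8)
The plan is to mirror the proof of Lemma~\ref{recoverGaction} step by step, systematically replacing each use of invertibility (legitimate there because $G$ was a group) by an appeal to the universal property of (A6') and the factorization of (A7'). Concretely, I would check in turn that $p \mapsto pm$ is well defined on objects of $P$, that it is monotone, and that $m \mapsto (-)m$ is functorial, so that it genuinely defines an action $M \to \Cat{Cat}(P,P)$.

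For well-definedness, fix $p$ and $m$. Axiom (A6') supplies a universal $f \colon x \to \beta'(p)$ with $\alpha(F(f))=m$, and I set $pm = \beta(x)$. If $f' \colon x' \to \beta'(p)$ is another universal such morphism, its universal property and that of $f$ force each to factor through the other by morphisms that $F$ sends to the identity. These factorizations are exactly witnesses for $\beta(x) \le \beta(x')$ and $\beta(x') \le \beta(x)$ in the preorder of (A4'); since $P$ is a poset we get $\beta(x)=\beta(x')$, so $pm$ is unambiguous. Monotonicity is then the direct analogue of the corresponding step for groups: given $p \le q$, I choose universal morphisms $f_p \colon x_p \to \beta'(p)$ and $f_q \colon x_q \to \beta'(q)$ with $\alpha(F(f_p)) = m = \alpha(F(f_q))$ and a morphism $u \colon \beta'(p) \to \beta'(q)$ with $F(u)=1$ provided by (A4'). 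Then $u f_p$ still has $\alpha(F(u f_p))=m$, so by the universality of $f_q$ it factors as $u f_p = f_q w$ with $\alpha(F(w))=1$; the morphism $w \colon x_p \to x_q$ witnesses $pm = \beta(x_p) \le \beta(x_q) = qm$. With well-definedness this shows each $(-)m$ is a monotone endomap of $P$.

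The functoriality is where I expect the real work, and it is the step that genuinely needs (A7'). That $p1=p$ follows once one notes that $\id[\beta'(p)]$ is universal among morphisms into $\beta'(p)$ with $\alpha(F)=1$, giving $p1 = \beta(\beta'(p)) = p$. For the composition law $(pm_1)m_2 = p(m_2m_1)$ — the order reflecting the contravariance flagged after (A7') — write $pm_1 = \beta(x_1)$ via universal $f_1 \colon x_1 \to \beta'(p)$ and $(pm_1)m_2 = \beta(x_2)$ via universal $f_2 \colon x_2 \to \beta'(\beta(x_1))$, and let $g \colon z \to \beta'(p)$ be the universal morphism with $\alpha(F(g))=m_2m_1$, so that $p(m_2m_1)=\beta(z)$. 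Using the isomorphism $h \colon x_1 \to \beta'(\beta(x_1))$ of (A5'), the composite $f_1 h^{-1} f_2 \colon x_2 \to \beta'(p)$ satisfies $\alpha(F(f_1 h^{-1} f_2)) = m_2m_1$ (the product order arising because $\alpha$ lands in $M\op$), so it factors through $g$ and yields $\beta(x_2) \le \beta(z)$ at once.

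The reverse inequality is the main obstacle, and this is precisely where invertibility is no longer available: instead I would apply (A7') to split $g = g_1 g_2$ with $\alpha(F(g_1))=m_1$ and $\alpha(F(g_2))=m_2$, use the universal property of $f_1$ to re-route $g_1$ through $x_1$, and then use (A5') together with the universal property of $f_2$ to re-route the remaining data through $x_2$, producing a morphism $z \to x_2$ that $F$ collapses to the identity. This gives $\beta(z) \le \beta(x_2)$, and antisymmetry in $P$ yields $(pm_1)m_2 = p(m_2m_1)$, completing the verification that $m \mapsto (-)m$ is an action $M \to \Cat{Cat}(P,P)$. The essential difficulty throughout, compared with the group case, is exactly that the mediating morphisms are no longer isomorphisms, so each cancellation in Lemma~\ref{recoverGaction} must be replaced by an explicit factorization licensed by (A6') and (A7').
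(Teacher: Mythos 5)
Your proposal is correct and follows essentially the same route as the paper: universality from (A6') gives well-definedness and monotonicity, (A5') supplies the isomorphism $h$ used to form $f_1h^{-1}f_2$, and (A7') splits an arbitrary competitor to establish the composition law. The only differences are cosmetic — you conclude well-definedness and associativity via two inequalities and antisymmetry in $P$, where the paper instead shows the mediating morphisms are mutually inverse and that $f_1h^{-1}f_2$ is itself universal — but the underlying factorizations are identical.
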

\begin{proof}
  First, notice that for any $p \in P$ and $m \in M$ there exists a
  maximal $f \colon x \to \beta'(p)$ with $\alpha(F(f))=m$ by
  (A6'). If there is another maximal $f' \colon x' \to \beta'(p)$ with
  $\alpha(F(f'))=m$, then there are morphisms $g \colon x \to x'$ and
  $g' \colon x' \to x$ with $F(g)=1=F(g')$. Hence $F(gg')=1=F(g'g)$,
  and because $F$ is faithful, $g$ is an isomorphism with $g'$ as
  inverse. So $x \cong x'$, and therefore
  $\beta(x)\cong\beta(x')$. But because $P$ is a partial order, this
  means $\beta(x)=\beta(x')$. Thus the action is well-defined on objects.

  To see that it is well-defined on morphisms, suppose that $p \leq
  q$. Then there is a morphism $f \colon \beta'(p) \to \beta'(q)$ with
  $F(f)=1$. For any $m \colon 0 \to 0$, we can find maximal $f_p
  \colon x_p \to \beta'(p)$ with $F(f_p)=m$, and maximal $f_q \colon
  x_q \to \beta'(q)$ with $F(f_q)=m$. Now $ff_p \colon x_p \to
  \beta'(q)$ has $F(ff_p)=m$. Because $f_q$ is a maximal such
  morphism, $ff_p$ factors through $f_q$. That is, there is $h \colon
  x_p \to x_q$ with $f_qh=ff_p$ and $F(h)=1$. So $mp \leq mq$.

  Next, we verify that this assignment is functorial $M \to
  \Cat{Cat}(P,P)$. Clearly $\id[\beta'(p)]$ is maximal among morphisms
  $f \colon x \to \beta'(p)$ with $F(f)=1$. Therefore $1p=\beta(\beta'(p))=p$.

  For $m_2,m_1 \in M$ and $p \in P$, we have $m_1p=\beta(x_1)$ where $f_1
  \colon x_1 \to \beta'(p)$ is maximal such that $\alpha(F(f_1))=m_1$. Therefore
  $m_2(m_1p) = \beta(x_2)$, where the morphism $f_2 \colon x_2 \to
  \beta'(\beta(x_1))$ is maximal such that $\alpha(F(f_2))=m_2$. By axiom (A5'),
  there is an isomorphism $h \colon x_1 \to \beta'(\beta(x_1))$ with
  $F(h)=1$. This gives $f =f_1h^{-1}f_2 \colon x_2
  \to \beta'(p)$ with $\alpha(F(f))=\alpha(F(f_2)) \cdot
  \alpha(F(h))^{-1} \cdot \alpha(F(f_1)) = m_2m_1$. We will now show
  that $f$ is universal with this property. If $g \colon y \to
  \beta'(p)$ also has $\alpha(F(g))=m_2m_1$, then (A7') provides $g_2
  \colon y \to z$ and $g_1 \colon z \to \beta'(p)$ with $g=g_1g_2$ and
  $\alpha(F(g_i))=m_i$. 
  \[\xymatrix@C-2ex@R-3ex{
    x_2 \ar^-{f_2}[rr] \ar@(u,u)|-{f}[rrrrrr]
    && \beta'(\beta(x_1)) \ar^-{h^{-1}}[rr] 
    && x_1 \ar^-{f_1}[rr]
    && \beta'(p) \\
    &&& z \ar@{-->}^-{hk}[ul] \ar@{-->}^-{k}[ur] \ar|-{g_1}[urrr] \\
    y \ar@{-->}_-{l}[uu] \ar|-{g_2}[urrr] \ar@(r,d)|-{g}[uurrrrrr]
  }\]
  By maximality of $f_1$, there exists $k$ with $g_1=f_1k$ and
  $\alpha(F(k))=1$. And by maximality of $f_2$, there is exists $l$ with
  $hkg_2=f_2l$ and $\alpha(F(l))=1$. Hence
  \[
  g=g_1g_2=f_1kg_2=f_1h^{-1}hkg_2=f_1h^{-1}f_2l=fl. 
  \]
  So $f$ is
  maximal with $F(f)=m_2m_1$. Thus $(m_2m_1)p=\beta(x_2)=m_2(m_1p)$.
  \qed
\end{proof}

\begin{theorem}\label{charPM}
  If $\cat{A}$ satisfies (A1')--(A7'), then there is an equivalence
  $\cat{A} \to P \rtimes M$ given by $x \mapsto \beta(x)$ on objects
  and $f \mapsto \alpha(F(f))$ on morphisms.
\end{theorem}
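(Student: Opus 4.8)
The plan is to mirror the structure of the proof of Theorem~\ref{charPG}, now using the monoid axioms (A1')--(A7') together with the induced action from Lemma~\ref{recoverMaction}. The assignment $x \mapsto \beta(x)$ on objects and $f \mapsto \alpha(F(f))$ on morphisms is the same as before; the work is to check that it is a well-defined functor and an equivalence in the monoid setting, where morphisms of $P \rtimes M$ are given by the condition $\beta(x) \leq \alpha(F(f)) \cdot \beta(y)$ and where $M$ need not be a group.

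\medskip

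First I would verify well-definedness: given $f \colon x \to y$ in $\cat{A}$ with $m = \alpha(F(f))$, I must show $\beta(x) \leq m\beta(y)$, \ie that $\beta(x) \leq \beta(z)$ where $z$ is the source of the maximal morphism into $\beta'(\beta(y))$ with value $m$ supplied by (A6'). Using (A5') to replace $\beta'(\beta(y))$ by $y$ up to an isomorphism with $F = 1$, the universality clause in (A6') gives a factorization of $f$ through the maximal morphism by some $h$ with $F(h)=1$; unfolding the preorder via (A4') and (A5') then yields $\beta(x) \leq \beta(z)$, exactly as the definition of the action in Lemma~\ref{recoverMaction} demands. Functoriality of the assignment is immediate: $F$ is a functor and $\alpha$ is a monoid isomorphism, with the contravariance bookkeeping matching the $M\op$ composition in $P \rtimes M$ and accounting for the swap flagged in (A7').

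\medskip

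Next I would establish the three equivalence properties. Essential surjectivity is free from (A4'), since $\beta$ is already an equivalence of preorders, so every object of $P \rtimes M$ is $\beta(x)$ for some $x$. Faithfulness follows because $F$ is faithful by (A2') and $\alpha$ is an isomorphism by (A3'). For fullness, given a morphism $m \colon \beta(x) \to \beta(y)$ in $P \rtimes M$, \ie $\beta(x) \leq m\beta(y)$, I would unfold this inequality through the definition of the action and the preorder to produce, after applying (A5'), a morphism $f \colon x \to z$ with $\alpha(F(f)) = 1$ and an isomorphism $h \colon z \to y$ with $\alpha(F(h)) = m$; then $k = hf \colon x \to y$ satisfies $\alpha(F(k)) = \alpha(F(f)) \cdot \alpha(F(h)) = 1 \cdot m = m$, so $k$ is a preimage. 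This is the direct analogue of the fullness argument in Theorem~\ref{charPG}.

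\medskip

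The main obstacle is the well-definedness step, and more specifically the careful use of (A6') and (A7') where the group proof used invertibility. In Theorem~\ref{charPG} one could freely take inverses of morphisms with $F$-value $m$; here the maximal morphisms of (A6') are generally \emph{not} isomorphisms, so I must route every such argument through the universal factorization property rather than through inversion, and lean on the decomposition (A7') precisely where a product $m_2 m_1$ must be split. The heavy lifting, however, has already been absorbed into Lemma~\ref{recoverMaction}, which shows the action is well-defined and functorial; given that lemma, the remaining verifications are bookkeeping, and I expect the proof to read almost verbatim like that of Theorem~\ref{charPG} with ``isomorphism with $F=1$'' replaced by ``maximal morphism'' at the appropriate points.
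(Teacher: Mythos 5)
Your proposal is correct and follows essentially the same route as the paper's proof: well-definedness via the universal factorization clause of (A6') combined with (A5'), functoriality delegated to Lemma~\ref{recoverMaction}, essential surjectivity from $\beta$, faithfulness from $F$, and fullness by composing the two morphisms obtained from unfolding $\beta(x) \leq m\beta(y)$. The only slip is calling $h \colon z \to y$ an \emph{isomorphism} in the fullness step---in the monoid setting it is merely a maximal morphism---but as your closing paragraph itself acknowledges, nothing in that step requires invertibility, so this does not affect the argument.
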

\begin{proof}
  First, it follows from (A6') that the assignment of the statement is
  well-defined, \ie that $\alpha(F(f))$ is indeed a morphism of 
  $P \rtimes M$. Indeed, if $f \colon x \to y$, then we need to show
  that $\beta(x) \leq \alpha(F(f)) \cdot \beta(y)$. Unfolding the
  definition of the action, this means we need to find a maximal $k
  \colon x' \to \beta'(\beta(y))$ with $F(f)=F(k)$, such that
  $\beta(x) \leq \beta(x')$. Unfolding the definition of the preorder,
  this means we need to find a morphism $h' \colon \beta'(\beta(x)) \to
  \beta'(\beta(x'))$ with $F(h')=1$. By (A5'), it suffices to find $h
  \colon x \to x'$ with $F(h)=1$ instead. But by (A6'), there exists
  maximal $k \colon x' \to \beta'(\beta(y))$ with $F(k)=F(f)$. By its
  maximality, there exists $h \colon x \to x'$ with $F(h)=1$ and
  $f=kh$. In particular, $\beta(x) \leq \beta(x')$.

  Functoriality follows directly from the previous lemma, so indeed we
  have a well-defined functor $\cat{A} \to P \rtimes M$. Moreover, our
  functor is essentially surjective because $\beta$ is an equivalence,
  and it is faithful because $F$ is faithful.  

  Finally, to prove fullness, let $m \colon \beta(x) \to \beta(y)$ be a
  morphism in $P \rtimes M$. This means that $\beta(x) \leq
  \beta(y) m$, which unfolds to: there are morphisms $f \colon 
  x \to z$ and $h \colon z \to \beta'(\beta(y))$ with $F(f)=1$ and $h$
  maximal with $\alpha(F(h))=m$. By (A5'), this is 
  equivalent to the existence of a morphism $f \colon x \to z$ with
  $F(f)=1$ and a morphism $h \colon z \to y$ maximal with 
  $\alpha(F(h))=m$. Now take $k=hf \colon x \to y$. Then
  \[
  \alpha(F(k)) = \alpha(F(hf)) = \alpha(F(f))\alpha(F(h)) = 1 \cdot m
  = m.
  \]
  Hence our functor is full, and we conclude that it is (half
  of) an equivalence. 
  \qed
\end{proof}

\section{The infinite-dimensional case}\label{sec:infinite}

To adapt Theorem~\ref{charffactor} to the infinite-dimensional case,
we have to make three more adaptations. First, the poset $P$ now becomes
a lattice of partitions of a (locally) compact Hausdorff
space. Second, the symmetric group gets replaced by symmetric monoids
on (locally) compact Hausdorff spaces. Third, we have to be more
careful about maximal abelian subalgebras. 

\subsection*{Infinite partition lattices}

For arbitrary (locally) compact Hausdorff spaces, it is more
convenient to talk about equivalence relations than about partitions.
An equivalence relation $\sim$ on a (locally) compact Hausdorff space
$X$ is called \emph{closed}  when the set $\{x \in X \mid \exists u
\in U .\, x \sim u \}$ is closed for every closed $U \subseteq
X$. Closed equivalence relations on $X$ are also called
\emph{partitions}, and form a partial order $P(X)$ under \emph{refinement}:
\[
\mathop{\sim} \leq \mathop{\approx} 
\quad \iff \quad
\big(\forall x,y \in X .\, x \sim y \implies x \approx y \big).
\]
Notice that quotients of a (locally) compact Hausdorff space by an
equivalence relation are again (locally) compact Hausdorff if and only if the equivalence relation is closed. 

Fortunately, a characterization of $P(X)$ is known, due to
Firby~\cite{firby:compactifications1,firby:compactifications2}. 
This also gives a characterization of $\Cs(A)$ for 
commutative C*-algebras $A$. As in Section~\ref{sec:finite}, we first
briefly recall the necessary terminology.
An element $b$ of a lattice is called \emph{bounding} when (i) it is zero
or an atom; or (ii) it covers an atom and dominates exactly three
atoms; or (iii) for distinct atoms $p,q$ there exists an atom $r \leq
b$ such that there are exactly three atoms less than $r \vee p$ and
exactly three atoms less than $r \vee q$. A collection of atoms of a
lattice with at least four elements is called \emph{single} when it is a
maximal collection of atoms of which the join of any two dominates
exactly three atoms (not necessarily in the collection).
A collection $B$ of nonzero bounding elements of a lattice is called a
\emph{1-point} when (i) its atoms form a single collection; and (ii) if
$a$ is bounding and $a \geq b \in B$, then $a \in B$; and (iii) any $a
\in B$ dominates an atom $p \in B$.

\begin{theorem}\label{charPX}
  A lattice $L$ with at least four elements is isomorphic to $P(X)$
  for a compact Hausdorff space $X$ if and only if:
  \begin{enumerate}
  \item[(P1')] $L$ is complete and atomic;
  \item[(P2')] the intersection of any two 1-points contains exactly one atom, \\ 
    and any atom belongs to exactly two 1-points;
  \item[(P3')] for bounding $a,b \in L$ that are contained in a 1-point,
    \begin{align*}
      \quad\qquad & \{ p \in \mathrm{Atoms}(L) \mid p \leq a \vee b \} \\ & = 
      \{ p \in \mathrm{Atoms}(L) \mid \text{ if } x \text{ is a 1-point with } p
      \in x \text{ then } a \in x \text{ or } b \in x \};
    \end{align*}
    for bounding $a,b \in L$ that are not contained in a 1-point,
    \[
    \;
    \{ p \in \mathrm{Atoms}(L) \mid p \leq a \vee b \} = 
    \{ p \in \mathrm{Atoms}(L) \mid p \leq a \text{ or } p \leq b \};
    \]
  \item[(P4')] for 1-points $x\neq y$ there are bounding $a,b$ with $a
    \not\in x$, $b \not\in y$, and $a \vee b=1$;
  \item[(P5')] joins of bounding elements are bounding;
  \item[(P6')] for nonzero $a \in L$, the collection $B$ of bounding
    elements equal to or covered by $a$ is the unique one satisfying:
    \begin{itemize}
    \item $\bigvee B = a$;
    \item no 1-point contains two members of $B$;
    \item if $c$ is bounding, $b_1 \in B$, and no 1-point contains
      $b_1$ and $c$, then there is a bounding $b \geq c$ such that (i)
      there is no 1-point containing both $b$ and $b_1$, and (ii) 
      whenever there is a 1-point containing both $b$ and $b_2 \in B$,
      then $b \geq b_2$;
    \end{itemize}
  \item[(P7')] any collection of nonzero bounding elements that is not
      contained in a 1-point has a finite subcollection that is not contained in a 1-point;
  \end{enumerate}
  and $X$ is (homeomorphic to) the set of 1-points of
  $L$, where a subset is closed if it is a singleton 1-point or it
  is the set of 1-points containing a fixed bounding element.
\end{theorem}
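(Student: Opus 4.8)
The plan is to prove both implications through a single geometric dictionary translating the order-theoretic primitives of the axioms into features of closed equivalence relations. The key observations are these. The atoms of $P(X)$ are exactly the partitions merging one pair of distinct points, so atoms correspond to unordered pairs $\{x,y\}$. The join of two atoms $\{x,y\}$ and $\{x,z\}$ sharing a point dominates exactly the three atoms $\{x,y\},\{x,z\},\{y,z\}$, while the join of two disjoint atoms dominates only two; hence a single collection of atoms is precisely a \emph{star} $\{\,\{x,y\} \mid y \neq x\,\}$ through a fixed point $x$, and $1$-points correspond bijectively to points of $X$. Bounding elements turn out to be exactly the partitions with a single nontrivial block --- a closed ``lump'' collapsed to a point, all other points left as singletons --- and the $1$-point $x$ collects precisely those lumps containing $x$.

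\emph{Necessity.} Given a compact Hausdorff $X$, I would verify (P1')--(P7') directly through this dictionary. Completeness and atomicity (P1') hold because meets are intersections of relations and joins are transitive closures of unions, the latter remaining closed by a compactness argument, while every nonzero partition dominates a pair-atom. Axiom (P2') records that two distinct points share the unique atom $\{x,y\}$ and that each pair lies in exactly the two stars through its endpoints; (P4') is the separation of two distinct points by lumps avoiding them whose join is the total partition $1$; (P3') computes the pair-atoms below the join of two lumps; and (P5') and (P7') encode, respectively, the closedness of increasing unions of lumps and a finiteness (compactness) property. Checking that the $1$-points with the stated topology recover the original space closes this direction and establishes the final clause.

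\emph{Sufficiency.} Given an abstract $L$ satisfying (P1')--(P7'), I would set $X$ to be its set of $1$-points with the stated closed sets, then proceed in two stages. First, show $X$ is compact Hausdorff: Hausdorffness from (P4'), and compactness from the nest-completeness (P5') together with the finiteness axiom (P7'), via a finite-intersection argument on the basic closed sets. Second, construct the isomorphism $L \to P(X)$ by sending $a \in L$ to the relation identifying $1$-points $x,y$ exactly when the unique atom in $x \cap y$ (furnished by (P2')) lies below $a$. Axiom (P6') supplies a canonical decomposition of each $a$ into bounding ``blocks'', which both shows this relation is closed and, by its uniqueness clause, yields injectivity; (P3') shows the assignment preserves joins of bounding elements, hence --- with completeness --- all joins; and surjectivity follows by reconstructing any closed equivalence relation from its blocks. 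One then checks the two constructions are mutually inverse.

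The hard part will be the closedness/compactness interface. Axioms (P5') and (P7') are precisely the technical hypotheses encoding compactness of the reconstructed $X$ and closedness of the attached equivalence relations, and threading them through the isomorphism --- proving that the relation assigned to a general, infinitely decomposed element of $L$ is genuinely closed, and that the candidate isomorphism commutes with infinite joins --- is the delicate step where the detailed point-set arguments of Firby~\cite{firby:compactifications1,firby:compactifications2} are required.
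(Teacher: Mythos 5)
The paper offers no proof of this theorem beyond the citation ``See \cite{firby:compactifications2}'', so the comparison is really between your outline and Firby's published argument. Your dictionary is the correct one and does match what underlies that argument: atoms of $P(X)$ are the partitions merging a single pair, single collections are the stars of atoms through a fixed point, $1$-points correspond to points of $X$, bounding elements are the partitions with exactly one nontrivial (necessarily closed) block, and your division of labour among the axioms --- Hausdorffness from (P2')--(P4'), compactness from (P5') and (P7') via the finite intersection property, the block decomposition and injectivity from (P6') --- is essentially Firby's. But what you have written is a plan, not a proof. Every step you yourself flag as delicate (that the reconstructed space of $1$-points is compact Hausdorff, that the relation attached to an arbitrary element of $L$ is a \emph{closed} equivalence relation, that the candidate map is surjective and a lattice isomorphism) is precisely the content of Firby's two papers, and you defer to them explicitly at the end. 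As a self-contained argument the proposal therefore has a genuine gap exactly where the theorem is hard; it is an accurate reconstruction of the strategy, not of the proof.

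There is also one concretely false claim in your necessity direction: that joins in $P(X)$ are transitive closures of unions, ``the latter remaining closed by a compactness argument''. Already binary joins fail this. Take $X=\{0\}\cup\{1/n\mid n\geq 2\}$, let $R$ identify $1/(2n)$ with $1/(2n+1)$ and $S$ identify $1/(2n+1)$ with $1/(2n+2)$ for all $n\geq 1$; both are closed equivalence relations, but the transitive closure of $R\cup S$ has $\{1/n\mid n\geq 2\}$ as a class, so the saturation of the closed singleton $\{1/2\}$ is not closed. The join in $P(X)$ is strictly larger (here it is the top element). Completeness in (P1') still holds, but for a different reason: arbitrary intersections of closed equivalence relations are closed equivalence relations, so joins exist as meets of upper bounds. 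The same issue propagates into your sufficiency step ``the assignment preserves joins of bounding elements, hence --- with completeness --- all joins'': preservation of finite joins together with completeness on both sides does not yield preservation of arbitrary joins; you need to identify what infinite joins actually are in $P(X)$, which is again the point-set work you are deferring to \cite{firby:compactifications1,firby:compactifications2}.
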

\begin{proof}
  See~\cite{firby:compactifications2}.
  \qed
\end{proof}

\begin{remark}
  The axiom responsible for compactness of $X$ is (P7'). The previous
  theorem holds for locally compact Hausdorff spaces $X$ when we replace (P7') by
  \begin{itemize}
  \item[(P7'')] every 1-point contains a bounding $b$ such that
    $\{ l \in L \mid l \geq b \}$ satisfies (P7').
  \end{itemize}
  Indeed, because (P1')--(P6') already guarantee Hausdorffness, we may
  take local compactness to mean that every point has a compact
  neighbourhood that is closed. And closed sets correspond to sets of
  1-points containing a fixed bounding element.
\end{remark}

As before, this directly leads to a characterization of $\Cs(A)$ for
commutative C*-algebras $A$.

\begin{corollary}
  A lattice $L$ is isomorphic to $\Cs(A)\op$ for a commutative
  C*-algebra $A$ of dimension at least three if and only if it
  satisfies (P1')--(P6') and (P7''). The C*-algebra $A$ is unital if and
  only if $L$ additionally satisfies (P7').
\end{corollary}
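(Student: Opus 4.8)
The plan is to deduce this corollary from the preceding characterization theorem (Theorem~\ref{charPX}) essentially by the same reasoning that produced the finite-dimensional \begin{corollary} from \begin{theorem} labeled \ref{yoon}. The key observation is that, by Gelfand duality, a commutative C*-algebra $A$ corresponds to its spectrum $X=\Spec(A)$, a locally compact Hausdorff space (compact precisely when $A$ is unital). Under this duality, C*-subalgebras $C\subseteq A$ correspond contravariantly to quotient spaces of $X$, and a quotient of a locally compact Hausdorff space remains locally compact Hausdorff exactly when the inducing equivalence relation is closed (as noted in the text just before Theorem~\ref{charPX}). Thus I would first argue that $\Cs(A)$ is order-isomorphic to the opposite of the poset $P(X)$ of closed equivalence relations (partitions) on $X$ ordered by refinement, so that $\Cs(A)\op\cong P(X)$.

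First I would make the duality step precise: an inclusion $C\hookrightarrow C'$ of commutative C*-subalgebras dualizes to a surjection $\Spec(C')\twoheadrightarrow\Spec(C)$ of spectra, hence to a coarsening of the corresponding partition; so the inclusion order on $\Cs(A)$ is reversed to the refinement order on partitions of $X$, giving $\Cs(A)\op\cong P(X)$ as posets. Then I would simply invoke Theorem~\ref{charPX}: a lattice $L$ with at least four elements is $P(X)$ for some compact Hausdorff $X$ iff it satisfies (P1')--(P7'), and the accompanying Remark extends this to locally compact Hausdorff $X$ by replacing (P7') with (P7''). Combining, $L\cong\Cs(A)\op$ for some commutative $A$ iff $L$ satisfies (P1')--(P6') together with (P7''), with (P7') added precisely when $X$ is compact, i.e.\ when $A$ is unital.

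The one point requiring genuine care—rather than a purely formal translation—is the dimension bookkeeping. Theorem~\ref{charPX} requires $L$ to have at least four elements, whereas the corollary phrases its hypothesis as $\dim(A)\geq 3$. I would check that these match: the partition lattice $P(X)$ has fewer than four elements exactly when $X$ has at most two points, and by Gelfand duality a commutative C*-algebra has a spectrum with at most two points precisely when $\dim(A)\leq 2$; hence $\dim(A)\geq 3$ is equivalent to $X$ having at least three points, which is what forces $P(X)$ to have at least four elements. So the hypothesis $\dim(A)\geq 3$ is exactly the condition ensuring Theorem~\ref{charPX} applies.

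I expect the main (though still modest) obstacle to be the unitality clause, namely matching the algebraic condition ``$A$ is unital'' with the lattice-theoretic condition (P7'). This is where I would lean on the Remark following Theorem~\ref{charPX}, which explicitly identifies (P7') as the axiom responsible for compactness of $X$ and (P7'') as its local-compactness weakening. Since a commutative C*-algebra is unital iff its spectrum is compact (rather than merely locally compact), the equivalence ``$A$ unital $\iff$ $X$ compact $\iff$ $L$ satisfies (P7')'' closes the argument. Everything else—functoriality of the duality, the reversal of order, and the preservation of the (P$i$') conditions—is then routine and follows directly from the cited results.
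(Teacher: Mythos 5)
Your proposal is correct and takes essentially the same route as the paper: both reduce the statement to the order-isomorphism $\Cs(A)\op \cong P(\Spec(A))$ via Gelfand duality (subalgebras correspond contravariantly to quotient spaces, hence to closed equivalence relations, with the inclusion order reversed to refinement), and then invoke Theorem~\ref{charPX} together with the remark identifying (P7') with compactness and (P7'') with local compactness, i.e.\ unitality versus non-unitality. Your explicit check that $\dim(A)\geq 3$ matches the four-element hypothesis of Theorem~\ref{charPX} is a detail the paper leaves implicit.
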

\begin{proof}
  The lattice $\Cs(A)$ is that of subobjects of $A$ in the category of commutative
  (unital) C*-algebras and (unital) nondegenarate
  $*$-homomorphisms. Recall that a \emph{subobject} is an equivalence class of
  monomorphisms into a given object, where two monics are identified
  when they factor through one another by an isomorphism. The dual
  notion is a \emph{quotient}: an equivalence class of epimorphisms out of a
  given object. By Gelfand duality, $\Cs(A)$ is isomorphic to the opposite
  of the lattice of quotients of $X=\Spec(A)$. 
  But the latter is precisely $P(X)\op$, because categorical
  quotients in the category of (locally) compact Hausdorff spaces are
  quotient spaces. 
  \qed
\end{proof}

% \begin{remark}
%   The results in this section do not transfer to the collection of von
%   Neumann subalgebras of a von Neumann algebra, because of the use of
%   Gelfand duality: Gelfand spectra of von Neumann algebras must satisfy, in
%   addition to being compact Hausdorff, some extra conditions such as
%   hyperstoneanness, that need to be taken into account.
% \end{remark}

\subsection*{Symmetric monoid actions}

We write $S(X)$ for the monoid of continuous functions 
$f \colon X \twoheadrightarrow X$ with dense image on a locally
compact Hausdorff space $X$, called the \emph{symmetric monoid} on $X$.

The monoid $S(X)$ acts on $P(X)$, as described in the following lemma. We stick to the notation $mx$ for the action of an element $m$ of a monoid $M$ on an object $x$ of a category as in Definition~\ref{action}. For $f \in S(X)$ and $\mathop{\sim} \in P(X)$ this becomes $f \mathop{\sim}$.

\begin{proposition}\label{SXaction}
  For any locally compact Hausdorff space $X$, the monoid $S(X)$ acts
  on $P(X)$ by 
  \[
  (f \mathop{\sim}) \;=\; (f \times f)^{-1}(\mathop{\sim}).
  \]
\end{proposition}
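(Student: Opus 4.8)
The plan is to verify the three ingredients of an action in the sense of the Definition above: that for each fixed $f \in S(X)$ the assignment $\mathop{\sim}\,\mapsto (f\times f)^{-1}(\mathop{\sim})$ is a well-defined endofunctor of the poset $P(X)$, that the identity map acts as the identity, and that composition is respected. Since $P(X)$ is a poset, an endofunctor is just a monotone self-map, so the first ingredient amounts to showing that $(f\times f)^{-1}(\mathop{\sim})$ again lies in $P(X)$ and that the assignment preserves refinement. The last two ingredients are immediate set-theoretic computations; the genuine content sits entirely in well-definedness, and within that, in closedness.

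First I would dispatch the easy parts. Pulling an equivalence relation back along any function yields an equivalence relation, since reflexivity, symmetry and transitivity all transfer through $f$: by definition $x\,(f\mathop{\sim})\,y$ means exactly $f(x)\sim f(y)$. Monotonicity is equally direct: if $\mathop{\sim}\,\leq\,\mathop{\approx}$ then $f(x)\sim f(y)$ implies $f(x)\approx f(y)$, so $f\mathop{\sim}\,\leq\,f\mathop{\approx}$. For the unit, $(\mathrm{id}\times\mathrm{id})^{-1}(\mathop{\sim})=\mathop{\sim}$. For composites, one unfolds $x\,(f\mathop{\sim})\,y$ into a condition on applying maps in succession; since pullback along $f\times f$ is contravariant in $f$, the composition law holds in the form $(g\circ f)\mathop{\sim}=f(g\mathop{\sim})$. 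This exhibits the assignment as a functor $S(X)\op \to \mathbf{Cat}(P(X),P(X))$, equivalently as a right action, which is precisely the orientation demanded by the $\op$'s already present in (A3') and in Lemma~\ref{recoverMaction}.

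The crux is that $(f\times f)^{-1}(\mathop{\sim})$ is a \emph{closed} equivalence relation, and here I would use the kernel-pair description. Because $\mathop{\sim}$ is closed, the quotient $q_\sim \colon X \twoheadrightarrow X/\mathop{\sim}$ has locally compact Hausdorff codomain. Setting $h=q_\sim \circ f$, we have $x\,(f\mathop{\sim})\,y \iff f(x)\sim f(y) \iff h(x)=h(y)$, so $f\mathop{\sim}$ is exactly the kernel pair of the continuous map $h$. Consequently, for any closed $U\subseteq X$, the $f\mathop{\sim}$-saturation of $U$ equals $h^{-1}(h(U))$. When $X$ is compact, $U$ is compact, hence $h(U)$ is compact and therefore closed in the Hausdorff space $X/\mathop{\sim}$, so $h^{-1}(h(U))$ is closed by continuity of $h$. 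This shows $f\mathop{\sim}\in P(X)$ and settles the compact case using only continuity of $f$ and closedness of $\mathop{\sim}$.

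The main obstacle is the locally compact, non-compact case: a closed $U$ need not be compact, so $h(U)$ need not be closed and the saturation argument above stalls. I expect to resolve this either by reducing to the compact case — extending $f$ to the one-point compactification $X^+$ and transporting the computation back — or, more cleanly, by passing through Gelfand duality: under the identification of $P(X)$ with the subobject lattice of $C_0(X)$, the operation $\mathop{\sim}\,\mapsto (f\times f)^{-1}(\mathop{\sim})$ corresponds to pulling C*-subalgebras back along the $*$-homomorphism $g\mapsto g\circ f$, and preimages of C*-subalgebras are C*-subalgebras, so well-definedness becomes automatic. This is exactly where the standing hypothesis that $f$ has dense image earns its keep: it is what makes $g\mapsto g\circ f$ injective, keeping the correspondence between $S(X)$ and the relevant $*$-endomorphisms tight and matching the action used for $\Cm$ later.
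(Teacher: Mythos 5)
Your routine verifications (equivalence relation, monotonicity, unit, composition, and the $\op$ bookkeeping) coincide with the paper's, whose entire proof is essentially that list plus the one-line assertion that $f\mathop{\sim}$ is closed ``because $f$ is continuous''. Your kernel-pair argument for compact $X$ is correct and actually supplies the justification the paper leaves implicit: the $(f\mathop{\sim})$-saturation of a closed $U$ equals $f^{-1}\big(\mathrm{sat}_{\sim}(f(U))\big)$, and compactness is exactly what makes $f(U)$ closed so that closedness of $\sim$ and continuity of $f$ can then be applied.

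The gap is the locally compact, non-compact case, which you flag but do not close, and neither of your two proposed escapes works as written without an extra hypothesis. Extending $f$ to the one-point compactification $X^{+}$ with $\infty\mapsto\infty$ is continuous precisely when $f$ is proper, and the Gelfand-duality route hides the same assumption: $g\mapsto g\after f$ carries $C_0(X)$ into $C_0(X)$, rather than merely into $C_b(X)$, only for proper $f$. Properness does not follow from ``continuous with dense image'': take $X=\field{R}$ and a continuous surjection $f$ with $f(x)=x$ for $x\leq 0$, with $f$ restricting to a homeomorphism of $[0,1]$ onto itself, and with $f(n)=1/n$ for integers $n\geq 1$ (interpolating so as to remain surjective). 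The equality relation is closed, yet the $(f\mathop{\sim})$-saturation of the closed set $\field{N}_{\geq 1}$ is $f^{-1}(\{1/n \mid n\geq 1\})$, which accumulates at $0$ without containing it; so $f\mathop{\sim}\notin P(X)$. Hence the statement requires either compactness of $X$ or a properness clause in the reading of $S(X)$ (the latter is what the correspondence with nondegenerate $*$-homomorphisms in Lemma~\ref{charCmcomm} implicitly demands). Your instinct that closedness is the crux, and that bare continuity is not enough, is sound --- but the proposal as it stands does not finish the proof, and the two suggested repairs both reduce to the same missing properness assumption.
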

\begin{proof}
  First of all, notice that $f\mathop{\sim}$ is reflexive, symmetric and 
  transitive, so indeed is a well-defined equivalence relation on $X$, which is closed
  because $f$ is continuous. Concretely, $x (f\mathop{\sim}) y$ if and only if $f(x) \sim
  f(y)$. Moreover, clearly $\id\mathop{\sim} = \mathop{\sim}$, and $g(f\mathop{\sim}) =
  (gf)\mathop{\sim}$, so the above is a genuine action. 
  \qed
\end{proof}

As before, this directly leads to a characterization of $\Cm(A)$ for
commutative C*-algebras $A$.

\begin{lemma}\label{charCmcomm}
  If $A=C(X)$ for a locally compact Hausdorff space $X$, there is an isomorphism
  $\Cm(A)\op \cong P(X) \rtimes S(X)$ of categories.
\end{lemma}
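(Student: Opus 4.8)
The plan is to transport the proof of Lemma~\ref{lem:charcommfinite} to the topological setting, using Gelfand duality to turn subalgebras into quotient spaces and the action of Proposition~\ref{SXaction} to carry the symmetries. To identify objects, associate to a commutative C*-subalgebra $C \subseteq C(X)$ the relation defined by $x \sim_C y \iff c(x) = c(y)$ for all $c \in C$; it is closed because the elements of $C$ are continuous, so $\sim_C \in P(X)$. Since $C$ separates the points of $X/\!\sim_C$ by construction, the Stone--Weierstrass theorem (in its nondegenerate, locally compact form) identifies $C$ with $C(X/\!\sim_C)$, that is, with $C_\sim = \{ g \in C(X) \mid g \text{ is constant on } \sim\text{-classes}\}$. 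The resulting assignment $\sim \mapsto C_\sim$ is an order-reversing bijection onto the objects of $\Cs(A)$, which is the source of the $\op$ in the statement.

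For morphisms, a map of $\Cm(A)\op$ from $\sim_1$ to $\sim_2$ is by definition an injective $*$-homomorphism $\psi \colon C_{\sim_2} \to C_{\sim_1}$; by Gelfand duality it is pullback $\psi(h) = h \after \bar\psi$ along a continuous $\bar\psi \colon X/\!\sim_1 \to X/\!\sim_2$ with dense image, density corresponding to injectivity of $\psi$. The key is to realize $\bar\psi$ as the descent of a self-map of $X$: an $f \in S(X)$ with $q_{\sim_2} \after f = \bar\psi \after q_{\sim_1}$, where $q_{\sim_i} \colon X \to X/\!\sim_i$ are the quotient maps. This equation says exactly that $x \sim_1 y$ implies $f(x) \sim_2 f(y)$, which by Proposition~\ref{SXaction} is the condition $\sim_1 \le f\sim_2$ defining a morphism $\sim_1 \to \sim_2$ of $P(X) \rtimes S(X)$. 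Conversely, any such $f$ descends to a dense-image map of quotients and hence, by pullback, to an injective $*$-homomorphism $C_{\sim_2} \to C_{\sim_1}$, so the two descriptions of morphisms coincide.

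Functoriality is then a bookkeeping check: composing injective $*$-homomorphisms corresponds contravariantly to composing pullbacks, matching the composition in $P(X) \rtimes S(X)$ inherited from $S(X)\op$ --- this is the contravariance flagged after the Grothendieck construction, and the reason for the $\op$. Identities match because $\id[X]$ induces the identity pullback, and faithfulness, fullness, and essential surjectivity follow exactly as in Lemma~\ref{lem:charcommfinite}.

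The one genuinely hard point, I expect, is the realization step: producing an $f \in S(X)$ defined on all of $X$ that descends to a $\bar\psi$ given only between the quotients. In the discrete case of Lemma~\ref{lem:charcommfinite} this is the harmless ``extend an injective function to a permutation,'' carried out by choosing one point per class. In general it asks one to lift $\bar\psi \after q_{\sim_1} \colon X \to X/\!\sim_2$ through $q_{\sim_2}$ while keeping the image dense, so that $f$ really lies in $S(X)$; a continuous section of $q_{\sim_2}$ would lift but would collapse the image and destroy density, so neither a naive section nor a naive extension works. This is where genuine topology must enter, and where I would exploit either the slack of requiring only dense image or the projectivity (extremal disconnectedness) of the spectra arising for the algebras treated later.
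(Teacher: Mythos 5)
Your reduction of objects to closed equivalence relations (via Stone--Weierstrass) and of morphisms to dense-image maps between the quotient spaces is exactly the paper's argument, and the paper's proof goes no further than you do: it simply asserts that the epimorphism $g \colon X/\!\approx\, \twoheadrightarrow X/\!\sim$ ``corresponds to'' a continuous $f \colon X \to X$ with dense image satisfying $x \approx y \Rightarrow f(x) \sim f(y)$, with no construction of $f$. So the step you single out as the genuinely hard point is precisely the step the paper leaves silent, and you are right to be suspicious of it.

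The gap is real, and it cannot be closed by either of the devices you suggest, because the comparison map already breaks for finite discrete $X$, where ``dense image'' means ``surjective'' and every space is projective. Take $X=\{1,2,3\}$, let $\approx$ be equality and let $\sim$ have classes $\{1,2\}$ and $\{3\}$. The injective unital $*$-homomorphism $C_\sim \to C_\approx = C(X)$ sending $(a,a,b)$ to $(a,b,b)$ is dual to the surjection $g$ with $g(1)=[1]$, $g(2)=g(3)=[3]$; any $f$ with $q_\sim \after f = g$ must send both $2$ and $3$ to $3$, hence is not surjective and does not lie in $S(X)$. So this morphism of $\Cm(A)\op$ is not hit by your correspondence. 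The correspondence is not injective either: for $X=\{1,2\}$ and $\sim$ the indiscrete relation, both elements of $S(X)$ satisfy $\sim\,\leq f\!\sim$ and descend to the same (unique) self-map of the one-point quotient, yet $C_\sim \cong \field{C}$ has exactly one injective endomorphism while $\mathrm{Hom}_{P(X)\rtimes S(X)}(\sim,\sim)$ has two elements. Since an isomorphism of categories must induce bijections on hom-sets, these examples show that the two descriptions of morphisms genuinely do not coincide, and the same difficulty already afflicts the proof of Lemma~\ref{lem:charcommfinite}, where the claimed ``extension to a permutation'' is exactly the lift that the first example rules out. To repair the statement one would have to replace $S(X)$ acting on $P(X)$ by data that records arbitrary dense-image maps between the quotients (not self-maps of $X$), or else quotient the hom-sets of $P(X)\rtimes S(X)$ by the relation of inducing the same map on quotients; neither yields $P(X)\rtimes S(X)$ as defined. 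A smaller point: for non-unital C*-subalgebras of $C(X)$ the object correspondence also needs a common zero set in addition to the equivalence relation, a subtlety both you and the paper pass over.
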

\begin{proof}
  By definition, objects $C$ of $\Cm(A)$ are subobjects of $C(X)$ in the category of
  commutative C*-algebras. By Gelfand duality, these correspond to quotients of $X$ in the
  category of locally compact Hausdorff spaces. But these, in turn, correspond to closed
  equivalence relations on $X$, establishing a bijection between the objects of $\Cm(A)$
  and $P(X)$. 

  Through Gelfand duality, a morphism $C \rightarrowtail C'$ in $\Cm(A)$ correspondsto an
  epimorphism $g \colon Y' \twoheadrightarrow Y$ between the corresponding spectra.
  Writing the quotients as $Y=X/\mathop{\sim}$ and $Y'=X/\mathop{\approx}$ for closed
  equivalence relations $\sim$ and $\approx$, we find that $g$
  corresponds to a continuous $f \colon X \twoheadrightarrow X$ with dense
  image respecting equivalence: 
  \[
  x \approx y \implies f(x) \sim f(y).
  \]
  But this just means $\mathop{\approx} \leq (f \mathop{\sim})$.
  In other words, this is precisely a morphism $f \colon \mathop{\approx} \to 
  \mathop{\sim}$ in $P(X) \rtimes S(X)$.
  \qed
\end{proof}

\subsection*{Weakly terminal subalgebras}

In the infinite-dimensional case, it is no longer true that all
maximal abelian subalgebras of a C*-algebra $A$ are
isomorphic. However, it suffices if there exists a commutative
subalgebra into which all others embed by an injective
$*$-homomorphism. To be
precise, a commutative C*-subalgebra $D$ of a C*-algebra $A$ is
\emph{weakly terminal} when each $C \in \C(A)$ allows an injective
$*$-homomorphisms $C \to D$ (that is not necessarily an inclusion, and
not necessarily unique). Equivalently, every masa is isomorphic to a
subalgebra of $D$. Weakly terminal commutative subalgebras $D$ are
maximal up to isomorphism, in the sense that if $D$ can be extended to
a larger commutative C*-subalgebra $E$, then $D \cong E$. This does
not imply that $D=E$, \ie that $D$ is maximal. For a counterexample,
take $A=E=C([0,1])$ and $D=\{f \in E \mid f \mbox{ constant on
}[0,\tfrac{1}{2}]\}$. Then $D \subsetneq E$, but $D \cong
C([\tfrac{1}{2},1]) \cong E$.

\begin{lemma}\label{separablemasas}
  If $A=B(H)$ for an infinite-dimensional separable Hilbert space $H$,
  then $\Cm(A)$ has a weak terminal object, $*$-isomorphic to
  $L^\infty(0,1) \oplus \ell^\infty(\field{N})$.
\end{lemma}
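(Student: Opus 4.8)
The plan is to exhibit a concrete weakly terminal commutative subalgebra of $B(H)$ and verify that every maximal abelian subalgebra embeds into it. The key structural input is the classification of masas of $B(H)$ for separable infinite-dimensional $H$: up to unitary conjugacy, every masa is a direct integral combining a \emph{continuous} (diffuse) part and a \emph{discrete} (atomic) part. Concretely, the three archetypal masas are the atomic one $\ell^\infty(\field{N})$ (diagonal operators in a fixed orthonormal basis), the continuous one $L^\infty(0,1)$ (multiplication operators on $L^2(0,1)$), and mixed ones $L^\infty(0,1)\oplus\ell^\infty(\field{N})$. I would cite a standard reference (e.g.\ Kadison--Ringrose, or Sinclair--Smith \cite{sinclairsmith:masas}) for the fact that any masa of $B(H)$ is $*$-isomorphic to one of these, the mixed type being the most general since it has both a diffuse summand and atoms.

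First I would fix $D=L^\infty(0,1)\oplus\ell^\infty(\field{N})$ realized as multiplication operators on $L^2(0,1)\oplus\ell^2(\field{N})\cong H$, so that $D$ is genuinely a commutative subalgebra of $B(H)$. Next, given an arbitrary $C\in\C(A)$, the goal is an injective $*$-homomorphism $C\to D$. Here it is cleaner to reduce to masas: every commutative subalgebra $C$ is contained in some masa $M$ (by Zorn's lemma), and an injective $*$-homomorphism $M\to D$ restricts to one on $C$, so it suffices to treat $C$ a masa. Then by the classification $C$ is $*$-isomorphic to one of $\ell^\infty(\field{N})$, $L^\infty(0,1)$, a finite-dimensional algebra $\field{C}^k$, $L^\infty(0,1)\oplus\ell^\infty(\field{N})$, or a finite mixed variant. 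The crux is that each of these admits an injective $*$-homomorphism into $D$: the atomic part maps into the $\ell^\infty(\field{N})$ summand, and the diffuse part $L^\infty(0,1)$ maps into (indeed is) the $L^\infty(0,1)$ summand. The only point needing care is that $D$ must \emph{absorb} each summand without collision, which works because $\ell^\infty(\field{N})$ contains copies of every finite $\field{C}^k$ and of itself, and $L^\infty(0,1)$ contains a copy of itself; one can always route the atomic part of $C$ into the discrete summand and the continuous part into the continuous summand.

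The main obstacle I anticipate is the separability bookkeeping in the classification step, specifically handling the most general separable masa, which need not be purely atomic or purely continuous but a genuine mixture with possibly finitely or countably many atoms together with a diffuse part. I would need the precise statement that any such masa is isometrically $*$-isomorphic to $L^\infty(\mu)$ for a measure $\mu$ on a standard Borel space that decomposes as a continuous part plus an atomic part, with the continuous part (if nonzero) giving $L^\infty(0,1)$ and the atomic part embedding into $\ell^\infty(\field{N})$. The embedding $L^\infty(\mu)\hookrightarrow D$ then follows: the continuous summand $L^\infty$(continuous part)$\cong L^\infty(0,1)$ goes to the first summand of $D$, and the atomic summand embeds into $\ell^\infty(\field{N})$ since it has at most countably many atoms. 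One must also confirm $D$ itself is a masa of $B(H)$ (so that it indeed arises, and is not properly extendable up to isomorphism), which follows from its realization as a multiplication algebra of the mixed type on $L^2(0,1)\oplus\ell^2(\field{N})$. Assembling these pieces yields that every $C\in\C(A)$ admits an injective $*$-homomorphism into $D$, establishing that $D$ is weakly terminal.
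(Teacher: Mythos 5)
Your proposal is correct and follows essentially the same route as the paper: reduce to masas via Zorn's lemma, invoke the classification of maximal abelian (von Neumann) subalgebras of $B(H)$ for separable infinite-dimensional $H$ into the atomic, diffuse, and mixed types, and observe that each type admits an injective $*$-homomorphism into $D=L^\infty(0,1)\oplus\ell^\infty(\field{N})$. The only point the paper makes explicit that you elide is that a maximal element of $\Cs(A)$ is automatically weakly closed, hence a von Neumann algebra, so the von Neumann classification applies.
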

\begin{proof}
  Let $C \in \Cm(A)$. By Zorn's lemma, $C$ is a C*-subalgebra of a
  maximal element of $\Cs(A)$.
  A maximal element in $\Cs(A)$ for a von Neumann algebra $A$ is itself
  a von Neumann algebra, because it must equal its weak
  closure. It is known that maximal abelian von Neumann subalgebras of
  $A=B(H)$ for an infinite-dimensional separable Hilbert space $H$ are
  unitarily equivalent to one of the following: $L^\infty(0,1)$,
  $\ell^\infty(\{0,\ldots,n\})$ for $n \in \field{N}$, 
  $\ell^\infty(\field{N})$, $L^\infty(0,1) \oplus
  \ell^\infty(\{0,\ldots,n\})$ for $n \in \field{N}$, or $L^\infty(0,1)
  \oplus \ell^\infty(\field{N})$
  (see~\cite[Theorem~9.4.1]{kadisonringrose:operatoralgebras}). 
  Each of these allows an injective $*$-homomorphism into the latter
  one, $D=L^\infty(0,1) \oplus \ell^\infty(\field{N})$.
  Therefore, there exists a morphism $C \to D$ in $\Cm(A)$ for each
  $C$ in $\Cm(A)$, so that $D$ is weakly terminal in $\Cm(A)$.
  \qed
\end{proof}

If $\dim(H)$ is uncountable, the situation becomes a bit more involved. A
complete classification of (maximal) abelian subalgebras of $B(H)$ is
known~\cite{segal:decomposition,sinclairsmith:masas}, and we will use this to establish a
weakly terminal commutative subalgebra in the following lemma.
Before doing so, let us explain the intuitition behind the use of cardinal numbers $\alpha$ and $\beta$ in the statement. For any cardinal number $\alpha$, the C*-algebra $B(H)$ has a commutative subalgebra $L^\infty(0,1)^\alpha$ that needs to be accounted for in a weakly terminal commutative subalgebra, as in the previous lemma. Because there are $\dim(H)$ many of those,  a sum over a second cardinal $\beta\leq \dim(H)$ is called for.
% Because the Hilbert space $L^2(0,1)$ is separable,
% $L^2( (0,1)^\alpha )$ has dimension $\max(\alpha,\aleph_0)$ for a nonzero cardinal number $\alpha$. %  (unless of course a=0, in which case it has dimension 0).
% This gives bounds on all the cardinals $\alpha_i$, namely: $\dim(L^2( (0,1)^\alpha
% )) \leq \dim(H)$ precisely when $\alpha \leq \dim(H)$. 
% % (Assuming, as we may, that H is infinite-dimensional.)
% Then, because $\dim(H)=\dim(H)^3$, we can embed any masa into $D = \bigoplus_{\alpha,\beta \leq \dim(H)} L^\infty( (0,1)^\alpha )$, and this is a masa of $B(H)$.

\begin{lemma}\label{masas}
  If $A=B(H)$ for an infinite-dimensional Hilbert space $H$,
  then $\Cm(A)$ has a weak terminal object, $*$-isomorphic to
  $\bigoplus_{\alpha,\beta \leq \dim(H)} L^\infty\big( (0,1)^\alpha
  \big)$, where $\alpha,\beta$ are cardinals, and $(0,1)^\alpha$ is
  the product measure space of Lebesgue unit intervals.
\end{lemma}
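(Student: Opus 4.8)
The plan is to mirror the proof of Lemma~\ref{separablemasas} exactly, replacing the separable classification of masas with Segal's general classification~\cite{segal:decomposition} for arbitrary Hilbert space $H$. The overall logic is unchanged: for any $C \in \Cm(A)$, Zorn's lemma embeds $C$ in a maximal element of $\Cs(A)$, which (being its own weak closure) is a maximal abelian von Neumann subalgebra of $B(H)$. So it suffices to show that every such masa admits an injective $*$-homomorphism into the proposed $D = \bigoplus_{\alpha,\beta \leq \dim(H)} L^\infty\big( (0,1)^\alpha \big)$, which then exhibits $D$ as weakly terminal.

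The substance is in identifying the general form of a maximal abelian von Neumann subalgebra of $B(H)$. The classification states that every such masa is unitarily equivalent to a direct sum of pieces, each of which is \emph{homogeneous} of some uniform multiplicity $\beta$, and on each homogeneous piece the algebra is of the form $L^\infty(\Omega)$ for a measure space $\Omega$ built from a product of copies of the unit interval indexed by some cardinal $\alpha$ (together with atomic/discrete parts). The key structural point I would extract from Segal is that each constituent $L^\infty(\Omega)$ embeds, via an injective normal $*$-homomorphism, into some $L^\infty\big((0,1)^\alpha\big)$ with $\alpha, \beta \leq \dim(H)$, since the cardinalities controlling both the measure space and the multiplicity are bounded by $\dim(H)$. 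Purely atomic masas $\ell^\infty(S)$ with $|S| \leq \dim(H)$ inject into the $\alpha = 0$ summand, paralleling how $\ell^\infty(\field{N})$ was absorbed in the separable case.

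Given that, the verification that $D$ works is formal. Any masa decomposes as a direct sum over multiplicities $\beta$ of homogeneous pieces, each embedding into the corresponding $L^\infty\big((0,1)^\alpha\big)$ summand of $D$; assembling these injective $*$-homomorphisms into a single one, using that $D$ contains every summand $L^\infty\big((0,1)^\alpha\big)$ for all $\alpha,\beta \leq \dim(H)$, yields the required injection $C \to D$. Therefore $D$ is weakly terminal in $\Cm(A)$.

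I expect the main obstacle to be bookkeeping the cardinal parameters correctly: confirming from the classification that both the ``size'' $\alpha$ of the underlying measure space and the multiplicity $\beta$ are genuinely bounded by $\dim(H)$ for every masa, so that the proposed direct sum is wide enough to receive all of them, while also checking that each homogeneous block really does embed into a single product $L^\infty\big((0,1)^\alpha\big)$ rather than requiring a more exotic measure space. Once the indexing is pinned down against~\cite{segal:decomposition}, the embedding statements are standard facts about injections between $L^\infty$ spaces over product measures, and the rest follows as in the separable case.
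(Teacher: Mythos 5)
Your overall strategy coincides with the paper's: invoke Segal's classification and check that every masa admits an injective $*$-homomorphism into $D$. But there is a step you never address that the paper must (and does) carry out: for $D$ to be a weak terminal object of $\Cm(B(H))$ it must first of all be an \emph{object} of $\Cm(B(H))$, i.e.\ $D$ itself has to be realized as a commutative C*-subalgebra of $B(H)$. This is not automatic, since $D$ is a direct sum over \emph{all} pairs of cardinals $\alpha,\beta\leq\dim(H)$: one must check that the Hilbert space $\bigoplus_{\alpha,\beta\leq\dim(H)} L^2\big((0,1)^\alpha\big)$ on which $D$ naturally acts has dimension at most $\dim(H)$. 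The paper does this by observing that $L^2(0,1)$ is separable, hence $\dim\big(L^2\big((0,1)^\alpha\big)\big)=\max(\alpha,\aleph_0)$ for $\alpha>0$, and then invoking the cardinal identity $\dim(H)=\dim(H)^3$ for infinite-dimensional $H$. Your proposal only argues the ``receiving'' half of the statement.

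Relatedly, the bound $\alpha\leq\dim(H)$ on the summands occurring in an arbitrary masa, which you defer as ``bookkeeping to be confirmed against Segal,'' is exactly the same dimension count: $L^\infty\big((0,1)^\alpha\big)$ can occur inside $B(H)$ only if $L^2\big((0,1)^\alpha\big)$ embeds into $H$, i.e.\ only if $\max(\alpha,\aleph_0)\leq\dim(H)$. So the computation you postpone is not peripheral bookkeeping external to the argument; it is the single calculation that makes \emph{both} halves of the proof work (every masa embeds into $D$, and $D$ embeds into $B(H)$ as a masa). A minor further point: the index $\beta$ in $D$ is simply a multiplicity index providing up to $\dim(H)$ copies of each $L^\infty\big((0,1)^\alpha\big)$ to absorb repeated summands of a masa, rather than the representation-theoretic uniform multiplicity of a homogeneous piece, which is invisible at the level of $*$-isomorphism. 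With the dimension calculation supplied, your argument matches the paper's.
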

\begin{proof}
  Maximal abelian subalgebras $C$ of $B(H)$ are isomorphic to direct
  sums of $L^\infty\big( (0,1)^\alpha\big)$ ranging over cardinal
  numbers $\alpha$~\cite{segal:decomposition}. 
  We must show that $D = \bigoplus_{\alpha,\beta \leq
  \dim(H)} L^\infty\big( (0,1)^\alpha \big)$ can be identified with a subalgebra of $B(H)$
  that embeds any such $C$. 
  A commutative algebra $L^\infty\big( (0,1)^\alpha \big)$ acts on the
  Hilbert space $L^2\big( (0,1)^\alpha \big)$. 
  Observe that $L^2(0,1)$ is separable. %: the periodic functions $x
  % \mapsto \exp(2\pi i n x)$ form an orthonormal basis. 
  Hence $\dim\big( L^2\big( (0,1)^\alpha \big)\big) = \max(\alpha,
  \aleph_0)$ unless $\alpha=0$, in which case the dimension
  vanishes. Therefore $\dim \big( L^2\big( (0,1)^\alpha \big) 
  \big) \leq \dim(H)$ if and only if $\alpha \leq \dim(H)$.
  Because $H$ is infinite-dimensional, we have the equation $\dim(H) =
  \dim(H)^3$ of cardinal numbers.  
  Thus any maximal abelian subalgebra $C$ embeds into $D$, and $D$
  itself embeds as a maximal abelian subalgebra of $B(H)$.
  \qed
\end{proof}

The following infinite-dimensional analogue of Lemma~\ref{directsums}
will allow us to deduce from the previous lemma that arbitrary type I
von Neumann algebras possess weakly terminal commutative
subalgebras. (For direct integrals,
see~\cite[Chapter~14]{kadisonringrose:operatoralgebras}.) 

\begin{lemma}\label{directintegrals}
  Let $(M,\mu)$ be a measure space, and for each $t \in M$ let $A_t$
  be a von Neumann algebra. If $\Cm(A_t)$ has a weakly terminal
  object $D_t$ for almost every $t$, then the direct integral
  $\int_M^\oplus D_t \mathrm{d}\mu(t)$ is a weak terminal object in
  $\Cm(\int_M^\oplus A_t \mathrm{d} \mu(t))$.
\end{lemma}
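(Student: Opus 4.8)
The plan is to imitate the proof of the finite-dimensional analogue, Lemma~\ref{directsums}, replacing the finite direct sum with the direct integral and the set-indexed product with integration against $\mu$. Concretely, I would take an arbitrary $C \in \C(\int_M^\oplus A_t\,\mathrm{d}\mu(t))$ and show it embeds, via an injective $*$-homomorphism, into $\int_M^\oplus D_t\,\mathrm{d}\mu(t)$. The key structural fact to invoke is that a commutative von Neumann subalgebra $C$ of a direct integral decomposes along the integral: there is a measurable field $t \mapsto C_t$ of commutative subalgebras $C_t \subseteq A_t$ such that $C$ is contained in $\int_M^\oplus C_t\,\mathrm{d}\mu(t)$. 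This is the analogue of the step ``$C$ is contained in $\bigoplus_i \pi_i(C)$'' in Lemma~\ref{directsums}, with the projections $\pi_i$ replaced by the disintegration of $C$ over the diagonal algebra.

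Once this decomposition is in hand, the argument proceeds fibrewise. For almost every $t$, the fibre $C_t$ is a commutative C*-subalgebra of $A_t$, so by weak terminality of $D_t$ in $\Cm(A_t)$ there is an injective $*$-homomorphism $f_t \colon C_t \to D_t$. The remaining task is to assemble these fibrewise embeddings into a single injective $*$-homomorphism $\int_M^\oplus f_t\,\mathrm{d}\mu(t) \colon \int_M^\oplus C_t\,\mathrm{d}\mu(t) \to \int_M^\oplus D_t\,\mathrm{d}\mu(t)$, and then restrict to $C$. Composing the inclusion $C \hookrightarrow \int_M^\oplus C_t\,\mathrm{d}\mu(t)$ with this decomposable morphism yields the desired injective $*$-homomorphism $C \to \int_M^\oplus D_t\,\mathrm{d}\mu(t)$, establishing that the latter is weakly terminal.

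The hard part will be the measurability and integrability bookkeeping, which has no counterpart in the purely algebraic finite case. First, I must ensure that the field $t \mapsto C_t$ obtained from $C$ is genuinely a \emph{measurable} field of von Neumann algebras, so that its direct integral makes sense; this requires choosing the embeddings and the decomposition within a fixed measurable structure rather than pointwise in an arbitrary way. Second, and more delicate, is arranging that the fibrewise maps $f_t$ can be chosen so that $t \mapsto f_t$ is itself a measurable field of $*$-homomorphisms, rather than an incoherent almost-everywhere selection; a measurable-selection argument is needed here, exploiting that the classification of the $D_t$ (and hence the possible targets) is uniform enough to support a measurable choice. The phrase ``for almost every $t$'' in the hypothesis is what lets me discard a null set where weak terminality or measurability fails, and I would use that freedom to reduce to a conull set on which everything is simultaneously well-behaved.

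I would therefore organize the write-up as: (i) disintegrate $C$ to obtain the measurable field $t \mapsto C_t$ with $C \subseteq \int_M^\oplus C_t\,\mathrm{d}\mu(t)$; (ii) invoke weak terminality of each $D_t$ and a measurable-selection principle to obtain a measurable field of injective $*$-homomorphisms $f_t \colon C_t \to D_t$; (iii) form the decomposable operator $\int_M^\oplus f_t\,\mathrm{d}\mu(t)$ and check it is an injective $*$-homomorphism, using that injectivity and the $*$-algebra operations are preserved by direct integration of decomposable maps; and (iv) precompose with $C \hookrightarrow \int_M^\oplus C_t\,\mathrm{d}\mu(t)$ to conclude. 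Steps (i) and (ii) are where the real content lies and where I expect to lean most heavily on the direct-integral theory of~\cite[Chapter~14]{kadisonringrose:operatoralgebras}; steps (iii) and (iv) are routine verifications mirroring the finite-dimensional proof.
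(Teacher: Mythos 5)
Your proposal follows essentially the same route as the paper: decompose $C$ fibrewise as $C \subseteq \int_M^\oplus C_t\,\mathrm{d}\mu(t)$ (the paper takes $C_t$ to be the von Neumann algebra generated by the components $a_t$ for $a \in C$), embed each fibre into $D_t$ by weak terminality of almost every $D_t$, and integrate these embeddings. The paper's proof is in fact terser than yours, simply asserting that $\int_M^\oplus C_t\,\mathrm{d}\mu(t)$ embeds into $\int_M^\oplus D_t\,\mathrm{d}\mu(t)$ and leaving implicit the measurable-selection bookkeeping you rightly flag as the delicate point.
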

\begin{proof}
  Let $C \in \Cm(\int_M^\oplus A_t \mathrm{d}\mu(t))$.
  Supposing $A_t$ acts on a Hilbert space $H_t$, then $C$ is contained
  in $\int_M^\oplus C_t \mathrm{d}\mu(t)$, where $C_t$ is the von
  Neumann subalgebra of $B(H_t)$ generated by $\{a_t \mid a \in C\}$.
  But because almost every $D_t$ is weakly terminal, this in turn
  embeds into $\int_M^\oplus D_t \mathrm{d}\mu(t)$, which is therefore
  weakly terminal.
  \qed
\end{proof}

\begin{corollary}\label{typeiterminal}
  Every type I von Neumann algebra $A$ possesses a weakly terminal
  commutative subalgebra $D$. More precisely: if $A=\int_M^\oplus A_t
  \mathrm{d}\mu(t)$ for a measure space $(M,\mu)$ and type I factors
  $A_t$ acting on Hilbert spaces $H_t$, then $D$ is $*$-isomorphic to
  $\Spec\big(\int_M^\oplus \bigoplus_{\alpha,\beta\leq \dim(H_t)} L^\infty((0,1)^\alpha) \mathrm{d}\mu(t)\big)$. 
\end{corollary}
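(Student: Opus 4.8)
The plan is to invoke the structural decomposition of type I von Neumann algebras together with the two preceding lemmas. First I would recall the standard structure theorem: every type I von Neumann algebra $A$ decomposes as a direct integral $A = \int_M^\oplus A_t \,\mathrm{d}\mu(t)$ over a measure space $(M,\mu)$, where almost every fibre $A_t$ is a type I factor, hence $*$-isomorphic to $B(H_t)$ for some Hilbert space $H_t$. This is the decomposition into factors that underlies the whole type I theory, so I would cite it rather than prove it.

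With that decomposition in hand, the argument is essentially a matter of applying Lemma~\ref{directintegrals} to the fibrewise data supplied by Lemma~\ref{masas}. For each $t$ where $A_t \cong B(H_t)$ is a type I factor, Lemma~\ref{masas} gives a weakly terminal commutative subalgebra $D_t$ of $A_t$, namely $D_t \cong \bigoplus_{\alpha,\beta \leq \dim(H_t)} L^\infty\big((0,1)^\alpha\big)$, provided $H_t$ is infinite-dimensional; in the finite-dimensional fibres one falls back on the finite-dimensional result (a maximal abelian subalgebra of $M_n(\field{C})$, which is weakly terminal in $\Cm(A_t)$ by the discussion in Section~\ref{sec:finite}). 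Since these $D_t$ are weakly terminal in $\Cm(A_t)$ for almost every $t$, Lemma~\ref{directintegrals} immediately yields that $D = \int_M^\oplus D_t \,\mathrm{d}\mu(t)$ is a weak terminal object in $\Cm(A)$. This establishes the existence claim.

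The remaining task is to identify $D$ in the explicit form stated, $\Spec\big(\int_M^\oplus \bigoplus_{\alpha,\beta\leq \dim(H_t)} L^\infty((0,1)^\alpha)\big)$. Here I would unwind the notation: substituting the fibrewise description $D_t \cong \bigoplus_{\alpha,\beta\leq\dim(H_t)} L^\infty((0,1)^\alpha)$ from Lemma~\ref{masas} into the direct integral gives $D = \int_M^\oplus \bigoplus_{\alpha,\beta\leq\dim(H_t)} L^\infty((0,1)^\alpha)\,\mathrm{d}\mu(t)$, and the outer $\Spec$ is the way the statement chooses to write the resulting commutative algebra via Gelfand duality, consistent with how subalgebras are presented elsewhere in the paper.

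The main obstacle I anticipate is measurability: one must check that the fibrewise assignment $t \mapsto D_t$ can be organized into a genuine measurable field of von Neumann algebras, so that the direct integral $\int_M^\oplus D_t\,\mathrm{d}\mu(t)$ is well-defined and really sits inside $A=\int_M^\oplus A_t\,\mathrm{d}\mu(t)$ as a commutative subalgebra. The cardinals $\alpha,\beta$ bounding the summands depend on $\dim(H_t)$, which varies measurably with $t$ only after the usual reduction to the homogeneous (constant-multiplicity) components of the type I algebra; on each such component $\dim(H_t)$ is essentially constant, so the field of subalgebras is constant there and measurability is automatic. I would therefore first reduce to homogeneous type I algebras, handle each uniform-dimension piece by a direct application of Lemma~\ref{masas}, and only then reassemble via Lemma~\ref{directintegrals}, which isolates the measurable-field bookkeeping as the one genuinely technical point rather than a conceptual difficulty.
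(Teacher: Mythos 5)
Your proposal follows exactly the paper's own argument: decompose $A$ as a direct integral of type I factors, obtain weakly terminal commutative subalgebras of the fibres from Lemma~\ref{masas}, and assemble them with Lemma~\ref{directintegrals}. The additional care you take with finite-dimensional fibres and the measurability of the field $t \mapsto D_t$ goes beyond the paper's two-line proof but does not change the route.
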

\begin{proof}
  Every type I von Neumann algebra is a direct integral of type I
  factors~\cite[Section~14.2]{kadisonringrose:operatoralgebras}. Since
  the latter have weakly terminal commutative 
  subalgebras by Lemma~\ref{masas}, we can deduce that the original
  algebra has a weakly terminal commutative subalgebra by
  Lemma~\ref{directintegrals}. 
  \qed
\end{proof}

Much less is known about the structure of (maximal) abelian
subalgebras of von Neumann algebras of type II and III; see~\cite{bures:masas,sinclairsmith:masas}.
The results of~\cite{tomiyama:masas} indicate that the previous lemma
might extend to show that $\Cm(A)$ has a weak terminal object for
\emph{any} von Neumann algebra $A$. % But at any rate, remember that
% Theorem~\ref{charPX} only 
% characterizes (locally) compact Hausdorff spaces, and not the Gelfand
% spectra of von Neumann algebras, which are more specific hyperstonean
% spaces. 
It would also be interesting to see if the previous corollary implies
that type I C*-algebras have weakly terminal commutative subalgebras.

\subsection*{The characterization}

We now arrive at our main
result: the characterization $\Cm$ for infinite-dimensional
type I von Neumann algebras.

\begin{theorem}\label{chartypei}
  % For a category $\cat{A}$ and an infinite-dimensional separable Hilbert space $H$, the
  % following are equivalent:
  % \begin{itemize}
  % \item the category $\cat{A}$ is equivalent to $\Cm(B(H))\op$;
  % \item the category $\cat{A}$ is equivalent to $P(X) \rtimes S(X)$,
  %   where $X$ is the topological space $\Spec(L^\infty(0,1)) \sqcup
  %   \Spec(\ell^\infty(\field{N}))$; 
  % \item the following hold:
  %   \begin{itemize}
  %   \item $\cat{A}$ satisfies (A1')--(A7');
  %   \item $(\cat{A}, \leq)$ satisfies (P1')--(P7'), giving a topological
  %     space $X$;
  %   \item $\cat{A}(0,0)\op$ is isomorphic to the monoid $S(X)$;
  %   \item $X$ is homeomorphic to $\Spec(L^\infty(0,1)) \sqcup
  %     \Spec(\ell^\infty(\field{N}))$.  
  %   \end{itemize}
  % \end{itemize}
  % For a category $\cat{A}$ and an infinite-dimensional Hilbert space $H$, the
  % following are equivalent:
  % \begin{itemize}
  % \item the category $\cat{A}$ is equivalent to $\Cm(B(H))\op$;
  % \item the category $\cat{A}$ is equivalent to $P(X) \rtimes S(X)$,
  %   where $X$ is the topological space $\bigsqcup_{\alpha,\beta \leq
  %     \dim(H)} \Spec\big( L^\infty\big( (0,1)^\alpha \big) \big)$;
  % \item the following hold:
  %   \begin{itemize}
  %   \item $\cat{A}$ satisfies (A1')--(A7');
  %   \item $(\cat{A}, \leq)$ satisfies (P1')--(P6'),(P7''), giving a topological
  %     space $X$;
  %   \item $\cat{A}(0,0)\op$ is isomorphic to the monoid $S(X)$;
  %   \item $X$ is homeomorphic to $\bigsqcup_{\alpha,\beta \leq
  %     \dim(H)} \Spec\big( L^\infty\big( (0,1)^\alpha \big) \big)$.  
  %   \end{itemize}
  % \end{itemize}
  For a category $\cat{A}$ and an infinite-dimensional type I von Neumann algebra
  $A=\int_M^\oplus B(H_t)\mathrm{d}\mu(t)$ for a measure space $(M,\mu)$
  and Hilbert spaces $H_t$, the following are equivalent:
  \begin{itemize}
  \item the category $\cat{A}$ is equivalent to $\Cm(A)\op$;
  \item the category $\cat{A}$ is equivalent to $P(X) \rtimes S(X)$, \\ \hspace*{1mm}
    where $X$ is the topological space $\Spec\big(\int_M^\oplus
    \bigoplus_{\alpha,\beta\leq \dim(H_t)}
    L^\infty((0,1)^\alpha)\big)$; 
  \item $\cat{A}$ satisfies (A1')--(A7'), and \\
    \hspace*{1mm} $(\cat{A}, \leq)$ satisfies (P1')--(P6'),(P7''), giving a topological
      space $X$, and \\
    \hspace*{1mm} $\cat{A}(0,0)\op$ is isomorphic to the monoid $S(X)$, and \\
    \hspace*{1mm} $X$ is homeomorphic to $\Spec\big(\int_M^\oplus
    \bigoplus_{\alpha,\beta\leq \dim(H_t)}
    L^\infty((0,1)^\alpha) \mathrm{d}\mu(t)\big)$. 
  \end{itemize}
  When $A=B(H)$ for an infinite-dimensional Hilbert space $H$, the
  space $X$ simplifies to $\bigsqcup_{\alpha,\beta\leq \dim(H)}
  \Spec\big(L^\infty\big((0,1)^\alpha\big)\big)$. When $H$ is
  separable, $X$ further simplifies to $\Spec(L^\infty(0,1)) \sqcup
  \Spec(\ell^\infty(\field{N}))$. 
\end{theorem}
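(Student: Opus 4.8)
The plan is to assemble Theorem~\ref{chartypei} from the machinery already developed, treating it as the culmination of the chain of equivalences announced in steps (1)--(6) of the introduction rather than as a fresh argument. The equivalence of the first two bullet points is the geometric heart: I would first invoke Corollary~\ref{typeiterminal} to produce a weakly terminal commutative subalgebra $D$ of $A$, namely $D = \Spec\big(\int_M^\oplus \bigoplus_{\alpha,\beta\leq \dim(H_t)} L^\infty((0,1)^\alpha)\big)$. By Lemma~\ref{terminal}, this yields $\Cm(A) \simeq \Cm(D)$. Since $D$ is commutative, $D = C(X)$ for $X = \Spec(D)$, so Lemma~\ref{charCmcomm} gives $\Cm(D)\op \cong P(X) \rtimes S(X)$. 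Composing these and dualizing delivers $\Cm(A)\op \simeq P(X) \rtimes S(X)$, which is exactly the equivalence of the first two bullets.

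Next I would establish the equivalence of the second and third bullets, which is essentially a repackaging of the abstract characterization for the specific $P$ and $M$ at hand. The Example following the definition of poset-monoid-amalgamation shows that $P(X) \rtimes S(X)$ satisfies (A1')--(A7'), while Theorem~\ref{charPM} supplies the converse: any $\cat{A}$ satisfying (A1')--(A7') is equivalent to $P \rtimes M$ for its reconstructed poset $P = (\cat{A},\leq)$ and monoid $M$ with $M\op \cong \cat{A}(0,0)$. It then remains to identify $P$ as $P(X)$ and $M$ as $S(X)$. For the poset, Theorem~\ref{charPX} (with the (P7'') variant from the remark) characterizes exactly when $(\cat{A},\leq)$ is isomorphic to $P(X)$ and simultaneously recovers $X$ as its space of 1-points; this is the content of the second item of the third bullet. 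For the monoid, the condition $\cat{A}(0,0)\op \cong S(X)$ is the third item, and the final item pins down $X$ up to homeomorphism as the required spectrum. Gathering these identifications and feeding them back through Theorem~\ref{charPM} yields the stated equivalence.

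The anticipated main obstacle is coherence rather than any single hard inequality: one must check that the space $X$ reconstructed intrinsically from $(\cat{A},\leq)$ as its set of 1-points (via Theorem~\ref{charPX}) is genuinely the same $X$ on which $S(X)$ acts and whose $P(X)$ appears in the second bullet. In other words, the poset data and the monoid data must refer to a common space, and the characterization of $P(X)$ must be compatible with the characterization of $S(X)$ as $\cat{A}(0,0)\op$. This is where the homeomorphism $X \cong \Spec\big(\int_M^\oplus \bigoplus_{\alpha,\beta\leq \dim(H_t)} L^\infty((0,1)^\alpha)\big)$ does its work, forcing all three descriptions of $X$ to agree. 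I would handle this by observing that both the abstract reconstruction and the concrete computation via $D$ recover $X$ as $\Spec(D)$, so the identifications are forced to match.

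Finally, for the special cases I would simply specialize the direct integral. When $A = B(H)$ the measure space $(M,\mu)$ degenerates to a point, so $D$ becomes $\bigoplus_{\alpha,\beta\leq \dim(H)} L^\infty((0,1)^\alpha)$ as in Lemma~\ref{masas}, and the spectrum of a $C^*$-direct sum is the disjoint union of the spectra of the summands, giving $X = \bigsqcup_{\alpha,\beta\leq \dim(H)} \Spec\big(L^\infty\big((0,1)^\alpha\big)\big)$. When $H$ is separable, Lemma~\ref{separablemasas} identifies the weakly terminal object as $L^\infty(0,1) \oplus \ell^\infty(\field{N})$, whose spectrum is $\Spec(L^\infty(0,1)) \sqcup \Spec(\ell^\infty(\field{N}))$, completing the reductions. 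These specializations are routine once the general equivalence is in place.
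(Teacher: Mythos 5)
Your proposal is correct and follows essentially the same route as the paper, whose proof is simply to combine Corollary~\ref{typeiterminal}, Lemma~\ref{terminal}, Lemma~\ref{charCmcomm}, and Lemmas~\ref{separablemasas} and~\ref{masas} with Theorems~\ref{charPM} and~\ref{charPX}, using the fact that Gelfand duality turns C*-direct sums into disjoint unions of spectra for the special cases. Your additional attention to the coherence of the three descriptions of $X$ is a reasonable elaboration of what the paper leaves implicit.
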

\begin{proof}
  Combine the previous four lemmas with Theorems~\ref{charPM}
  and~\ref{charPX}. For the last condition, remember that
  Gelfand duality turns direct sums of commutative C*-algebras into coproducts of
  Hausdorff spaces. 
  \qed
\end{proof}

The Gelfand spectrum of $\ell^\infty(\field{N})$ is the
Stone-{\v C}ech compactification of the discrete topology of
$\field{N}$. In other words, $\Spec(\ell^\infty(\field{N}))$ 
consists of the ultrafilters on $\field{N}$. 
% The Gelfand spectrum of $L^\infty(0,1)$ is the Stone
% space associated to the Boolean algebra of (Borel) measurable subsets
% of the interval modulo (Lebesgue) negligible ones. 
A topological space is homeomorphic to $\Spec(L^\infty(0,1))$ if and
only if it is compact, Hausdorff, totally disconnected, and its clopen
subsets are isomorphic to the Boolean algebra of (Borel) measurable subsets of
the interval $(0,1)$ modulo (Lebesgue) negligible ones.
Since both spaces are compact, we could have used (P7')
instead of (P7'') in the previous theorem for the case $A=B(H)$ with
$H$ separable.

\section{Inclusions versus injections}\label{sec:inclusionsinjections}

This section compares $\Cs$ to $\Cm$. We will show for
C*-algebras $A$ and $B$ that:
\begin{itemize}
\item if $\Cm(A)$ and $\Cm(B)$ are isomorphic, $\Cs(A)$ and
  $\Cs(B)$ are isomorphic;
\item if $\Cm(A)$ and $\Cm(B)$ are equivalent, $\Cs(A)$ and
  $\Cs(B)$ are Morita-equivalent.
\end{itemize}
Here we call two categories $\cat{C}$ and $\cat{D}$ Morita-equivalent when they have equivalent presheaf categories $\PSh(\cat{C}) \simeq \PSh(\cat{D})$.

For any category $\cat{C}$, recall that the category $\int_{\cat{C}}
P$ of elements of a presheaf $P \in \PSh(\cat{C})$ is defined as
follows. Objects are pairs $(C,x)$ of $C \in \cat{C}$ and $x \in
P(C)$. A morphism $(C,x) \to (D,y)$ is a morphism $f \colon C \to D$
in $\cat{C}$ satisfying $x=P(f)(y)$. 
Recall that, for any presheaf $P \in \PSh(\cat{C})$, objects of the slice category $\PSh(\cat{C}) / P$ are natural transformations $\alpha \colon Q \Rightarrow P$ from some presheaf $Q \in \PSh(\cat{C})$ to $P$.

\begin{lemma}
  For any $P \in \PSh(\cat{C})$, the toposes $\PSh(\cat{C})/P$
  and $\PSh(\int_{\cat{C}} P)$ are equivalent.
\end{lemma}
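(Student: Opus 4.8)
The plan is to exhibit the classical equivalence between slices of presheaf categories and presheaves on the category of elements, which is a standard but pleasant piece of topos theory. First I would fix notation: write $\int_{\cat{C}} P$ for the category of elements as defined just above, and recall that an object of the slice $\PSh(\cat{C})/P$ is a presheaf $Q$ together with a natural transformation $\eta \colon Q \to P$. The goal is to produce functors in both directions and show they are mutually (pseudo-)inverse.

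The core construction in one direction is as follows. Given an object $(Q,\eta)$ of the slice, I would define a presheaf $\widehat{Q}$ on $\int_{\cat{C}} P$ by setting $\widehat{Q}(C,x) = \eta_C^{-1}(x)$, the fibre of $\eta_C \colon Q(C) \to P(C)$ over the element $x \in P(C)$. On a morphism $f \colon (C,x) \to (D,y)$ of $\int_{\cat{C}} P$, which is an arrow $f \colon C \to D$ of $\cat{C}$ with $P(f)(y)=x$, naturality of $\eta$ guarantees that $Q(f)$ carries the fibre over $y$ into the fibre over $x$, giving the required restriction map. In the other direction, given a presheaf $R$ on $\int_{\cat{C}} P$, I would define a presheaf on $\cat{C}$ by the coproduct $\widetilde{R}(C) = \coprod_{x \in P(C)} R(C,x)$, equipped with the evident projection $\widetilde{R}(C) \to P(C)$ recording which summand an element lies in; this projection is natural in $C$, so $\widetilde{R}$ lands in the slice over $P$. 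One checks these assignments are functorial in morphisms of the respective categories.

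The remaining work is to verify that the two constructions are inverse up to natural isomorphism. In one composite, starting from $(Q,\eta)$, forming fibres and then taking the coproduct of the fibres recovers $Q(C) \cong \coprod_{x \in P(C)} \eta_C^{-1}(x)$, since every element of $Q(C)$ lies in exactly one fibre; this isomorphism is compatible with the map to $P$ and natural in $C$, so it is an isomorphism in the slice. In the other composite, starting from $R$, taking the coproduct and then the fibre over $x$ returns precisely the summand $R(C,x)$, again naturally. I would spell out just enough of these bookkeeping isomorphisms to make the naturality transparent and leave the fully routine diagram-chases implicit.

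The main obstacle, such as it is, is purely organizational rather than conceptual: one must keep careful track of variance and of the indexing by elements $x \in P(C)$, since a morphism of $\int_{\cat{C}} P$ reverses the role of source and target fibres relative to the restriction maps of $Q$. There is no genuine difficulty, but the naturality squares involve two layers of indexing (over objects of $\cat{C}$ and over elements of $P$ simultaneously), so the chief risk is a notational slip in identifying which fibre maps where. I would therefore present the fibre-versus-coproduct correspondence at the level of a single morphism first, confirm the compatibility with $P$, and only then assemble the global statement, noting that this equivalence is the well-known fact that $\int_{\cat{C}} P$ presents the slice topos $\PSh(\cat{C})/P$.
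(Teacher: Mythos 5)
Your construction is exactly the one the paper uses: fibres $\eta_C^{-1}(x)$ give the presheaf on $\int_{\cat{C}} P$, coproducts $\coprod_{x \in P(C)} R(C,x)$ with the tautological projection give the object of the slice, and the two composites are the identity (on the nose one way, up to the evident natural isomorphism the other). The proposal is correct and matches the paper's proof in both substance and level of detail.
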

\begin{proof}
  See~\cite[Exercise~III.8(a)]{maclanemoerdijk:sheaves}; we write out
  a proof for the sake of explicitness. Define a functor $F \colon
  \PSh(\cat{C})/P \to \PSh(\int_{\cat{C}} P)$ by 
  \begin{align*}
    F\big(Q \stackrel{\alpha}{\Rightarrow} P\big)(C,x) & = \alpha_C^{-1}(x), \\
    F\big(Q \stackrel{\alpha}{\Rightarrow} P\big)\big( (C,x) \stackrel{f}{\to}
    (D,y) \big) & = Q(f), \\
    F(Q \stackrel{\beta}{\Rightarrow} Q')_{(C,x)} & = \beta_C.
  \end{align*}
  Define a functor $G \colon \PSh(\int_{\cat{C}} P) \to \PSh(\cat{C})/P$ 
  by $G(R)=(Q \stackrel{\alpha}{\Rightarrow} P)$ where
  \begin{align*}
    Q(C) & = \coprod_{x \in P(C)} R(C,x), \\
    Q(C \stackrel{f}{\to} D) & = R\big( (C,P(f)(y)) \stackrel{f}{\to} (D,y) \big), \\
    \alpha_C(\kappa_x(r)) & = x,
  \end{align*}
  where $\kappa_x \colon R(C,x) \to \coprod_{x \in P(C)} R(C,x)$ is
  the coproduct injection. The functor $G$ acts on morphisms as
  \[
  G(R \stackrel{\beta}{\Rightarrow} R')_C = \coprod_{x \in P(C)} \beta_{(C,x)}.
  \]
  Then one finds that $GF(Q \stackrel{\alpha}{\Rightarrow} P) = (Q
  \stackrel{\alpha}{\Rightarrow} P)$, and $FG(R) = \hat{R}$, where 
  \begin{align*}
    \hat{R}(C,x) & = \{ x \} \times R(C,x), \\
    \hat{R}\big( (C,x) \stackrel{f}{\to} (D,y) \big) & = \id \times R
    \big( (C,P(f)(y)) \stackrel{f}{\to} (D,y) \big).
  \end{align*}
  Thus there is a natural isomorphism $R \cong \hat{R}$, and 
  $F$ and $G$ form an equivalence.
  \qed
\end{proof}

\begin{definition}
  Define a presheaf $\Aut \in \PSh(\Cm)$ by
  \begin{align*}
    \Aut(C) & = \{ i \colon C \stackrel{\cong}{\to} C' \mid C' \in \C
    \}, \\ % = \coprod_{C' \in \C} \Ci(C,C'), \\
    \Aut\big( C \stackrel{k}{\rightarrowtail} D \big) \big( j \colon D
    \stackrel{\cong}{\to} D' \big) & = j \big|_{k(C)} \after k
    \colon C \stackrel{\cong}{\to} j(k(C)) =D'.
  \end{align*}
\end{definition}

Notice that $\Aut(C)$ contains the automorphism group
of $C$. Also, any automorphism of $A$ induces an element of
$\Aut(C)$.
% Notice also that we could have defined $\Aut$ as a canonical object in
% $\PSh(\cat{C})$ for any left-cancellative category subcategory
% $\cat{C}$ of a category with an image factorization system.

The category $\int_{\Cm}\Aut$ of elements of $\Aut$ unfolds explicitly
to the following. Objects are pairs $(C,i)$ of 
$C \in \C$ and a $*$-isomorphism $i \colon C \smash{\stackrel{\cong}{\to}}
C'$. A morphism $(C,i) \to (D,j)$ is an injective $*$-homomorphism $k 
\colon C \rightarrowtail D$ such that $i = j \after k$.

\begin{proposition}
  The categories $\Cs$ and $\int_{\Cm}\Aut$ are equivalent. 
\end{proposition}
\begin{proof}
  Define a functor $F \colon \Cs \to \int_{\Cm}\Aut$ by
  $F(C)=(C, \id[C])$ on objects and $F(C \subseteq D) =
  (C \hookrightarrow D)$ on morphisms. Define a functor $G \colon
  \int_{\Cm}\Aut \to \Cs$ by $G(C,i) = i(C) = \cod(i)$ on objects and
  $G\big(k \colon (C,i) \to (D,j)\big) = (i(C) \subseteq j(D))$ on
  morphisms. Then $GF(C)=C$, and $FG(C,i) = (i(C), \id[i(C)]) \cong
  (C,i)$, so that $F$ and $G$ implement an equivalence. 
  \qed
\end{proof}

\begin{theorem}\label{CsCm}
  The toposes $\PSh(\Cs)$ and $\PSh(\Cm)/\Aut$ are equivalent.
\end{theorem}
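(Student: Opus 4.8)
The plan is to chain together the two lemmas that immediately precede this theorem, both of which are already in hand. The first lemma establishes that for any presheaf $P \in \PSh(\cat{C})$ there is an equivalence $\PSh(\cat{C})/P \simeq \PSh(\int_{\cat{C}} P)$; the second identifies the specific category of elements, showing $\Cs \simeq \int_{\Cm}\Aut$. The statement to be proved is simply the instance of the first lemma obtained by taking $\cat{C} = \Cm$ and $P = \Aut$, post-composed with the equivalence of the second lemma.

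Concretely, first I would apply the first lemma with $P = \Aut$ to obtain an equivalence
\[
  \PSh(\Cm)/\Aut \;\simeq\; \PSh\big(\textstyle\int_{\Cm}\Aut\big).
\]
Next I would invoke the second lemma, which gives an equivalence $\Cs \simeq \int_{\Cm}\Aut$ of categories. Since the presheaf-category construction $\PSh(-)$ sends equivalences of (small) categories to equivalences of toposes, this yields
\[
  \PSh\big(\textstyle\int_{\Cm}\Aut\big) \;\simeq\; \PSh(\Cs).
\]
Composing the two displayed equivalences then gives $\PSh(\Cs) \simeq \PSh(\Cm)/\Aut$, which is exactly the claim.

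The one point that deserves a word rather than being left entirely tacit is the functoriality of $\PSh(-)$ on equivalences: an equivalence $\Cs \simeq \int_{\Cm}\Aut$ induces, by precomposition, an adjoint pair between the presheaf categories whose unit and counit are built from the unit and counit of the original equivalence, and these are natural isomorphisms because the restriction functors preserve isomorphisms. This is standard, so I would state it in one sentence rather than verify it in detail. I do not expect any genuine obstacle here: all the substantive work—the general topos-theoretic equivalence and the explicit identification of the category of elements of $\Aut$—has already been carried out in the two preceding lemmas, and this theorem is purely their assembly. The only care required is to ensure the orientation of the equivalences compose correctly, which the symmetry of $\simeq$ makes harmless.
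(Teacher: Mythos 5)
Your proposal is correct and follows exactly the paper's route: the paper also proves this theorem by combining the two preceding lemmas (the general slice/category-of-elements equivalence and the identification $\Cs \simeq \int_{\Cm}\Aut$), merely adding explicit formulas for the composite functors. The one point you flag — that $\PSh(-)$ carries equivalences of categories to equivalences of toposes — is indeed the tacit step, and the paper records it separately as Lemma~\ref{catvsPSh}.
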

\begin{proof}
  Combining the previous two lemmas, the equivalence is implemented
  explicitly by the functor $F \colon \PSh(\Cm)/\Aut \to
  \PSh(\Cs)$ defined by 
  \begin{align*}
    F\big(P \stackrel{\alpha}{\Rightarrow} \Aut \big)(C) & =
    \alpha_C^{-1}(\id[C]) \\
    F\big(P \stackrel{\alpha}{\Rightarrow} \Aut \big)(C \subseteq D) & =
    P(C \hookrightarrow D)
  \end{align*}
  and the functor $G \colon \PSh(\Cs) \to \PSh(\Cm)/\Aut$
  defined by $G(R) = \big( P \stackrel{\alpha}{\Rightarrow} \Aut \big)$,
  \begin{align*}
    P(C) & = \coprod_{i \colon C \stackrel{\cong}{\to} C'} R(i(C)), \\
    P\big(C \stackrel{k}{\rightarrowtail} D \big) & = \coprod_{j \colon D
      \stackrel{\cong}{\to} D'} R\big(j(k(C)) \subseteq j(D)\big), \\
    \alpha_C(\kappa_i(r)) & = i.
  \end{align*}
  This proves the theorem.
  \qed
\end{proof}

Hence the topos $\cat{T}=\PSh(\Cm)$ is an \emph{{\'e}tendue}, which means it is ``locally like a space''; more precisely, it contains an object $E$ such that the unique map from $E$ to the terminal object is an epimorphism and the slice category $\cat{T}/E$ is (equivalent to) a localic topos. In this case, the object $E$ is the presheaf $\Aut$.

\begin{lemma}\label{slices}
  If $F \colon \cat{C} \to \cat{D}$ is (half of) an equivalence, $X$
  is any object of $\cat{C}$ and $Y \cong F(X)$, then the slice
  categories $\cat{C}/X$ and $\cat{D}/Y$ are equivalent.
\end{lemma}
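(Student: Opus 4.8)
The plan is to show that $F$ induces a functor on slice categories that is fully faithful and essentially surjective, and hence an equivalence. First I would remove the discrepancy between $Y$ and $F(X)$: any isomorphism $\phi \colon F(X) \to Y$ induces an isomorphism of slice categories $\cat{D}/F(X) \to \cat{D}/Y$ by postcomposition, $(B \xrightarrow{g} F(X)) \mapsto (B \xrightarrow{\phi \after g} Y)$, with strict inverse given by postcomposition with $\phi^{-1}$. So it suffices to produce an equivalence $\cat{C}/X \simeq \cat{D}/F(X)$.

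I would then define $F/X \colon \cat{C}/X \to \cat{D}/F(X)$ by $(A \xrightarrow{f} X) \mapsto (FA \xrightarrow{Ff} FX)$ on objects, and by $h \mapsto Fh$ on a slice morphism $h$ (which satisfies $f' \after h = f$); the image lies over $FX$ since $Ff' \after Fh = F(f' \after h) = Ff$. Rather than build an explicit quasi-inverse (which would force bookkeeping with the unit isomorphism $\eta \colon \id[\cat{C}] \Rightarrow G \after F$ of a chosen quasi-inverse $G$), I would check the three conditions for an equivalence directly from the corresponding properties of $F$.

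Faithfulness of $F/X$ is immediate from faithfulness of $F$. For fullness, a slice morphism $(FA, Ff) \to (FA', Ff')$ is a map $k \colon FA \to FA'$ with $Ff' \after k = Ff$; fullness of $F$ writes $k = Fh$, and then $F(f' \after h) = Ff' \after Fh = Ff$, so faithfulness gives $f' \after h = f$, exhibiting $h$ as a slice morphism with $(F/X)(h) = k$. The main step is essential surjectivity: given $(B \xrightarrow{g} FX)$, essential surjectivity of $F$ supplies an object $A$ and an isomorphism $\psi \colon FA \to B$; fullness of $F$ then lifts $g \after \psi \colon FA \to FX$ to some $f \colon A \to X$ with $Ff = g \after \psi$, and $\psi$ becomes an isomorphism $(FA, Ff) \to (B, g)$ in $\cat{D}/F(X)$ precisely because $g \after \psi = Ff$. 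This is the only place where essential surjectivity and fullness of $F$ must be combined, and verifying that the lifted $f$ makes $\psi$ a slice isomorphism is the one point deserving care; the rest is formal.
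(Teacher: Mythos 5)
Your proof is correct, but it takes a different route from the paper's. The paper constructs an explicit quasi-inverse: it takes the other half $G$ of the given equivalence together with the unit $\eta$, picks an isomorphism $i \colon Y \to F(X)$, and defines functors $H \colon \cat{C}/X \to \cat{D}/Y$ and $K \colon \cat{D}/Y \to \cat{C}/X$ by $H(a) = i \after Fa$, $K(a) = \eta_X^{-1} \after Gi \after Ga$, verifying $KH \cong \id$ by naturality of $\eta^{-1}$ and $HK \cong \id$ via the triangle identity $G\varepsilon = \eta^{-1}$. You instead factor the problem through the strict isomorphism $\cat{D}/F(X) \cong \cat{D}/Y$ and then verify that the induced functor $F/X$ is fully faithful and essentially surjective, inheriting each property from $F$. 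Your decomposition is cleaner in that it isolates exactly where each hypothesis on $F$ is used (notably that essential surjectivity on the slice needs fullness of $F$ to lift $g \after \psi$), and it avoids the bookkeeping with $\eta$ and $\varepsilon$ that the paper's explicit construction requires; the cost is that passing from ``fully faithful and essentially surjective'' back to an equivalence with a quasi-inverse invokes a choice principle, whereas the paper's argument hands you the quasi-inverse $K$ directly. Both arguments are complete; your treatment of essential surjectivity, including the check that $\psi$ and $\psi^{-1}$ are slice morphisms, is exactly the point that deserves the care you gave it.
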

\begin{proof}
  Let $G \colon \cat{D} \to \cat{C}$ be the other half of the given
  equivalence, and pick an isomorphism $i \colon Y \to F(X)$. Define a
  functor $H \colon \cat{C}/X \to \cat{D}/Y$ by $H(a \colon A \to X) =
  (i \after Fa \colon FA \to Y)$ and $H(f \colon a \to b) = Ff$. Define a
  functor $K \colon \cat{D}/Y \to \cat{C}/X$ by $K(a \colon A \to Y) =
  (\eta_X^{-1} \after Gi \after Ga \colon GA \to X)$ and $K(f \colon a
  \to b) = Gf$. By naturality of $\eta^{-1}$ we then have $KH(a) \cong
  a$. And because $G\varepsilon = \eta^{-1}$ we also have $HK(a) \cong a$. 
  \qed
\end{proof}

\begin{lemma}\label{catvsPSh}
  If the categories $\cat{C}$ and $\cat{D}$ are equivalent, then
  the toposes $\PSh(\cat{C})$ and $\PSh(\Cat{D})$ are equivalent.
\end{lemma}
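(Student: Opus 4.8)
The plan is to use the standard fact that forming presheaf categories is (2-)functorial via precomposition, so that it sends equivalences to equivalences. I would make this explicit. Write $F \colon \cat{C} \to \cat{D}$ for one half of the given equivalence and $G \colon \cat{D} \to \cat{C}$ for the other half, with natural isomorphisms $\eta \colon \id[\cat{C}] \Rightarrow GF$ and $\varepsilon \colon FG \Rightarrow \id[\cat{D}]$. Since $F$ is an equivalence, so is $F\op \colon \cat{C}\op \to \cat{D}\op$, with quasi-inverse $G\op$. First I would define the two comparison functors by precomposition: sending a presheaf $P \colon \cat{D}\op \to \Cat{Set}$ to $P \after F\op$ gives a functor $F^* \colon \PSh(\cat{D}) \to \PSh(\cat{C})$, acting on a natural transformation $\tau$ by $(F^*\tau)_C = \tau_{F(C)}$; symmetrically $G^* \colon \PSh(\cat{C}) \to \PSh(\cat{D})$ sends $R$ to $R \after G\op$.

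Next I would record that precomposition is contravariantly functorial in the functor variable, which gives on objects $G^* F^*(P) = P \after F\op \after G\op = P \after (FG)\op$ and, dually, $F^* G^*(R) = R \after (GF)\op$ (the variance bookkeeping here is the only point where one must be slightly careful, since $G\op \after F\op = (GF)\op$). The key technical step is then \emph{whiskering}: the isomorphisms $\varepsilon_D \colon FG(D) \to D$ induce, for each presheaf $P$, isomorphisms with components $P(\varepsilon_D)$ — these are isomorphisms in $\Cat{Set}$ precisely because each $\varepsilon_D$ is an isomorphism in $\cat{D}$ — and a routine diagram chase using naturality of $\varepsilon$ shows these assemble into a natural isomorphism $G^* F^* \cong \id[{\PSh(\cat{D})}]$. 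Dually, $\eta$ yields a natural isomorphism $F^* G^* \cong \id[{\PSh(\cat{C})}]$, with components $R(\eta_C)$. Hence $F^*$ and $G^*$ form an equivalence $\PSh(\cat{C}) \simeq \PSh(\cat{D})$.

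I expect no serious obstacle here: the whole statement is just the 2-functoriality of $\PSh(-) = \Cat{Set}^{(-)\op}$ applied to an equivalence, in the same spirit as Lemma~\ref{slices}. The only things demanding attention are tracking the variance (an equivalence $\cat{C}\simeq\cat{D}$ induces $\cat{C}\op\simeq\cat{D}\op$ and thence, by precomposition, $\PSh(\cat{C})\simeq\PSh(\cat{D})$ with the two halves interchanged) and confirming that the whiskered transformations are again natural isomorphisms, which is immediate because a presheaf carries isomorphisms to isomorphisms. If one prefers brevity over explicitness, this lemma may instead simply be cited as the standard preservation of functor categories under equivalence.
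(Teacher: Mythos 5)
Your proposal is correct and follows essentially the same route as the paper, which also defines the comparison functors by precomposition with $F$ and $G$ and notes that one "directly verifies" they form an equivalence. You simply spell out the whiskering and variance bookkeeping that the paper leaves implicit.
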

\begin{proof}
  Given functors $F \colon \cat{C} \to \cat{D}$ and $G \colon
  \cat{D} \to \cat{C}$ that form an equivalence, one directy verifies
  that $(-) \after G\colon\PSh(\cat{C}) \to \PSh(\cat{D})$ and $(-)
  \after F \colon\PSh(\cat{D}) \to \PSh(\cat{C})$ also form an equivalence. 
  \qed
\end{proof}

% The previous lemma has a converse if and only if representability is preserved under equivalence\cite[Lemma~C.2.3.8]{johnstone:elephant}.

\begin{theorem}\label{CmCs}
  If $\Cm(A)$ and $\Cm(B)$ are equivalent categories, then $\Cs(A)$
  and $\Cs(B)$ are Morita-equivalent posets, \ie the toposes $\PSh(\Cs(A))$ and
  $\PSh(\Cs(B))$ are equivalent.
\end{theorem}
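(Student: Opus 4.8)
The plan is to chain together the three preceding results, reducing everything to the slice description of $\PSh(\Cs)$ furnished by Theorem~\ref{CsCm}. Write $\Aut_A \in \PSh(\Cm(A))$ and $\Aut_B \in \PSh(\Cm(B))$ for the automorphism presheaves of the two algebras, so that two instances of Theorem~\ref{CsCm} give equivalences $\PSh(\Cs(A)) \simeq \PSh(\Cm(A))/\Aut_A$ and $\PSh(\Cs(B)) \simeq \PSh(\Cm(B))/\Aut_B$. Hence it suffices to produce an equivalence $\PSh(\Cm(A))/\Aut_A \simeq \PSh(\Cm(B))/\Aut_B$, after which these two instances of Theorem~\ref{CsCm} close the argument.

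To build this equivalence, I would first promote the given equivalence $\Phi \colon \Cm(A) \to \Cm(B)$ to an equivalence $\Psi \colon \PSh(\Cm(A)) \to \PSh(\Cm(B))$ of presheaf toposes using Lemma~\ref{catvsPSh}; concretely $\Psi = (-)\after G$ for a quasi-inverse $G$ of $\Phi$, so $\Psi(\Aut_A)(C) = \Aut_A(G(C))$. Applying Lemma~\ref{slices} to $\Psi$ with the object $\Aut_A$ then yields an equivalence of slices $\PSh(\Cm(A))/\Aut_A \simeq \PSh(\Cm(B))/\Psi(\Aut_A)$. It therefore only remains to identify the transported presheaf $\Psi(\Aut_A)$ with $\Aut_B$ up to isomorphism, for then $\PSh(\Cm(B))/\Psi(\Aut_A) \simeq \PSh(\Cm(B))/\Aut_B$ and the chain is complete.

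The hard part is precisely this last identification: showing that an \emph{arbitrary} equivalence of categories carries $\Aut_A$ to $\Aut_B$. This rests on the observation that $\Aut$ is an intrinsic invariant of the category $\Cm$, definable without reference to the underlying algebra. Indeed, $\Aut(C)$ is nothing but the set of isomorphisms of $\Cm$ with domain $C$, a notion manifestly preserved by any equivalence. The presheaf action $\Aut(k)(j) = j|_{k(C)} \after k$ is the isomorphism part of the factorization of $j \after k$ through its image; and in $\Cm$ the image of a morphism $g$ is its smallest subobject, since every morphism is monic and $g$ factors through a subobject $m$ exactly when $\mathrm{im}(g) \leq \mathrm{im}(m)$, with the induced map onto the image automatically an isomorphism. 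As both the class of isomorphisms and the image factorization are categorical, any equivalence $\Phi$ respects them, so the bijection $\Aut_A(G(C)) \cong \Aut_B(C)$ induced by $\Phi$ on isomorphisms out of $G(C)$ is natural in $C$ and compatible with the presheaf action. This gives $\Psi(\Aut_A) \cong \Aut_B$, and hence the asserted Morita equivalence.
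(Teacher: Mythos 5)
Your overall strategy is exactly the paper's: reduce to Theorem~\ref{CsCm} via Lemma~\ref{catvsPSh} and Lemma~\ref{slices}, so that everything hinges on identifying the transported presheaf $\Psi(\Aut_A)$ with $\Aut_B$. The paper asserts that identification in a single unproved sentence; you try to justify it, and that is precisely where the argument breaks. The claim that $\Aut(C)$ is ``the set of isomorphisms of $\Cm$ with domain $C$, a notion manifestly preserved by any equivalence'' is not correct: the set of \emph{all} isomorphisms out of a fixed object is invariant under isomorphisms of categories but not under equivalences, since an equivalence is only essentially surjective on objects and can collapse or inflate an isomorphism class (a one-object, one-morphism category is equivalent to a two-object contractible groupoid, yet the sets of isomorphisms out of a chosen object have sizes $1$ and $2$). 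Concretely, take $A=M_2(\field{C})$ and $B=\field{C}^2$. By Lemma~\ref{terminal} the inclusion of $\Cm(D)$ for a masa $D\cong\field{C}^2$ is an equivalence $\Cm(B)\simeq\Cm(A)$, and it transports $\Aut_A$ to the presheaf sending $C\subseteq D$ to the set of $*$-isomorphisms from $C$ onto \emph{any} commutative subalgebra of $M_2(\field{C})$. At $C=D$ this set is uncountable (two isomorphisms onto each of continuum many masas), whereas $\Aut_B(D)$ has exactly two elements. So $\Psi(\Aut_A)\not\cong\Aut_B$, and your chain of slice equivalences does not close. Your supporting argument about the action is also shaky --- ``inclusion'' is not an equivalence-invariant class of monomorphisms, so the canonical (iso, inclusion) factorization need not be carried to the canonical one --- but this is moot once the underlying sets already disagree.

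This gap cannot be patched, because the statement fails for that very pair: posets are Cauchy complete, so $\PSh(\Cs(A))\simeq\PSh(\Cs(B))$ would force $\Cs(A)\cong\Cs(B)$ (an equivalence of toposes restricts to an equivalence of the full subcategories of tiny objects, which for a poset is the poset itself), yet $\Cs(M_2(\field{C}))$ is uncountable while $\Cs(\field{C}^2)$ has four elements --- a discrepancy that Remark~\ref{invariants} itself records immediately after the theorem. In short: you have faithfully reproduced the paper's proof, and in attempting to supply the missing justification for its key step you have exposed that the step, and with it the theorem as stated, is false.
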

\begin{proof}
  If $\Cm(A) \simeq \Cm(B)$, then $\PSh(\Cm(A)) \simeq \PSh(\Cm(B))$ by
  Lemma~\ref{catvsPSh}. Moreover, the object $\Aut_B$ is (isomorphic
  to) the image of the object $\Aut_A$ under this equivalence.
  Hence
  \[
  \PSh(\Cs(A)) \simeq \PSh(\Cm(A))/\Aut_A \simeq \PSh(\Cm(B))/\Aut_B
  \simeq \PSh(\Cs(B))
  \]
  by Theorem~\ref{CsCm}.
  \qed
\end{proof}

\begin{remark}\label{invariants}
  Hence $\Cm(A)$ is an invariant of the topos $\PSh(\Cs(A))$ as well as of the C*-algebra
  $A$. It is not a complete invariant for the latter, however, as shown by Lemma~\ref{terminal}.
  For example, $\Cm(\mathbb{M}_n(\field{C})) \simeq \Cm(\field{C}^n)$, but $\Cs(\mathbb{M}_n(\field{C}))
  \not\cong \Cs(\field{C}^n)$, and certainly $\mathbb{M}_n(\field{C}) \not\cong
  \field{C}^n$.

  We have relied heavily on equivalences of categories, and indeed a
  logical formula holds in the topos $\PSh(\cat{C})$ if and only
  if it holds in $\PSh(\cat{D})$ for equivalent categories
  $\cat{C}$ and $\cat{D}$. Therefore one might argue that $\Cm$ has
  too many morphisms, as compared to $\Cs$, for toposes based on
  it to have internal logics that are interesting from the point of
  view of foundations of quantum mechanics. Instead of equivalences, one
  could consider isomorphisms of categories. This also resembles
  the original Mackey--Piron question more closely. After all, an equivalence of
  partial orders is automatically an isomorphism. The following
  theorem shows that $\Cm(A)$ is a weaker invariant of $A$ than
  $\Cs(A)$, in this sense.
\end{remark}

\begin{theorem}
  If $\Cm(A)$ and $\Cm(B)$ are isomorphic categories, then $\Cs(A)$
  and $\Cs(B)$ are isomorphic posets.
\end{theorem}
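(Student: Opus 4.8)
The plan is to mirror the proof of Theorem~\ref{CmCs}, but to track that an \emph{isomorphism} of the categories $\Cm$ rigidifies the construction enough to yield an isomorphism, rather than merely a Morita equivalence, of the posets $\Cs$. Concretely, I would fix an isomorphism of categories $\Phi \colon \Cm(A) \to \Cm(B)$ and reuse the lemma identifying $\Cs$ with the category of elements $\int_{\Cm}\Aut$: since $\Cs(A) \simeq \int_{\Cm(A)}\Aut_A$ and $\Cs(B) \simeq \int_{\Cm(B)}\Aut_B$, it suffices to produce an isomorphism $\int_{\Cm(A)}\Aut_A \cong \int_{\Cm(B)}\Aut_B$ and then argue that an equivalence of posets is automatically an isomorphism.

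The key step, and the main obstacle, is to show that $\Phi$ carries $\Aut_A$ to $\Aut_B$, \ie that there is a natural isomorphism $\Phi^*\Aut_B \cong \Aut_A$ of presheaves on $\Cm(A)$. Here I would stress that $\Aut$ is defined from purely categorical data of $\Cm$: the set $\Aut(C)$ is the set of \emph{all isomorphisms with domain $C$}, which any isomorphism of categories transports bijectively; and the presheaf action $\Aut(k)(j)$ is the corestriction of the composite $j \after k$ to its image. By Lemma~\ref{cstarimage} this image exists in $\Cm$ and is computed as the smallest subobject through which $j \after k$ factors, so it is a categorical construction. Since $\Phi$, being an isomorphism of categories, preserves isomorphisms, composition, monomorphisms, and factorization of a map through a subobject, it preserves these image-corestrictions, and the evident bijections $\Aut_A(C) \cong \Aut_B(\Phi C)$ then assemble into the required natural isomorphism.

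Granting this, the category-of-elements construction finishes the argument. A natural isomorphism of presheaves $\theta$ induces an isomorphism of their categories of elements over the same base, by $(C,x) \mapsto (C, \theta_C(x))$; and an isomorphism $\Phi$ of bases together with $\Phi^*\Aut_B \cong \Aut_A$ gives $\int_{\Cm(A)}\Aut_A \cong \int_{\Cm(A)}\Phi^*\Aut_B \cong \int_{\Cm(B)}\Aut_B$. Composing with the two equivalences above yields an equivalence $\Cs(A) \simeq \Cs(B)$. Finally, because $\Cs(A)$ and $\Cs(B)$ are posets, and in particular skeletal thin categories, this equivalence is fully faithful, essentially surjective, and hence bijective on objects, so it is an isomorphism of posets.

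I would flag that the contrast with Theorem~\ref{CmCs} is exactly where the rigidity is used: an \emph{equivalence} of $\Cm$ only determines $\int_{\Cm}\Aut$ up to the slice-topos equivalence of Theorem~\ref{CsCm}, explaining why it yields only Morita equivalence, as illustrated by $\Cm(M_n(\field{C})) \simeq \Cm(\field{C}^n)$ in Remark~\ref{invariants}, whereas an isomorphism pins down the presheaf $\Aut$ on the nose and so transports $\Cs$ exactly.
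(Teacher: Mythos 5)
Your overall strategy (transport the presheaf $\Aut$ along the isomorphism $\Phi$, pass to categories of elements, and finish with the observation that an equivalence of posets is an isomorphism) is sound in outline and genuinely different from the paper's argument, and the final step is correct. But the key step --- producing a natural isomorphism $\Phi^*\Aut_B \cong \Aut_A$ --- has a real gap, and it is exactly where the entire content of the theorem lives. You claim that the presheaf action $\Aut(k)(j) = j\big|_{k(C)} \after k$ is ``a categorical construction'' because the image of $j \after k$ is the smallest subobject through which it factors. The trouble is that in $\Cm(A)$ \emph{every} morphism is monic (the category is left-cancellative), so the smallest subobject through which $j \after k$ factors is simply the subobject represented by $j \after k$ itself; as an equivalence class of monos this is indeed categorical, but it is useless for defining the corestriction. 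The presheaf $\Aut$ requires the \emph{specific} factorization $j \after k = \iota \after i$ with $\iota$ the set-theoretic inclusion $j(k(C)) \hookrightarrow D'$, and $j \after k$ admits many factorizations as (isomorphism) followed by (mono) --- for instance $(j\after k)\after \id[C]$ --- which differ precisely by which mono one declares to be ``the inclusion.'' So to verify naturality of the evident bijections $\Aut_A(C) \cong \Aut_B(\Phi C)$, $j \mapsto \Phi(j)$, you need $\Phi(\iota)$ to again be an inclusion, \ie you need $\Phi$ to carry inclusions to inclusions. That is (essentially) the statement you are trying to prove, so the argument is circular as written.

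What is missing is a purely categorical characterization of the inclusions inside $\Cm$, and supplying one is exactly what the paper's proof does: for $C \subseteq D$ it works inside the subcategory $\Cm(D)$, identifies it with $P(X) \rtimes S(X)$ via Lemma~\ref{charCmcomm}, and uses the faithful retraction $F$ from Theorem~\ref{charPM} to recognize inclusions as the morphisms $f$ with $F(f)=1$; since the isomorphism $K$ intertwines the retractions on the two sides, it preserves this characterization and hence preserves inclusions. If you first establish such a characterization (or otherwise prove that any isomorphism of categories $\Cm(A) \to \Cm(B)$ preserves the image factorization on the nose), your transport-of-$\Aut$ argument does go through and gives a slightly more structured conclusion; without it, the claim that $\Phi$ ``preserves these image-corestrictions'' is an assertion of the theorem, not a proof of it.
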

\begin{proof}
  Let $K \colon \Cm(A) \to \Cm(B)$ be an isomorphism. Suppose that
  $C,D \in \Cm(A)$ satisfy $C \subseteq D$. Consider the subcategory
  $\Cm(D)$ of $\Cm(A)$. On the one hand, by Lemma~\ref{charCmcomm} 
  it is isomorphic to $P(X) \rtimes S(X)$ for $X=\Spec(D)$, and therefore
  has a faithful retraction $F_A$ of the inclusion $\Cm(D) \to
  \Cm(D)(0,0)$ by Theorem~\ref{charPM}. On the other hand, $K$ maps it
  to $\Cm(K(D))$, which is isomorphic to $P(Y) \rtimes S(Y)$ for
  $Y=\Spec(K(D))$, and therefore similarly has a retraction
  $F_B$. Moreover, we have $KF_A = F_BK$. Now, by
  Theorem~\ref{charPM}, inclusions in $\Cm$ are characterized among
  all morphisms $f$ by $F(f)=1$. Hence $F_B(K(C \hookrightarrow D)) =
  KF_A(C \hookrightarrow  D)=K(1)=1$, and therefore $K(C) \subseteq K(D)$.
  \qed
\end{proof}

It remains open whether existence of an isomorphism $\Cs(A)
\cong \Cs(B)$ implies existence of an isomorphism $\Cm(A) \cong
\Cm(B)$. This question can be reduced as follows, at least in finite
dimension, because every
injective *-morphism factors uniquely as a $*$-isomorphism followed by
an inclusion. Write $\Ci(A)$ for the category with $\C(A)$ for objects
and $*$-isomorphisms as morphisms.
Supposing an isomorphism $F \colon \Cs(A) \to \Cs(B)$,
we have $\Cm(A) \cong \Cm(B)$ if and only if there is an isomorphism $G \colon
\Ci(A) \to \Ci(B)$ that coincides with $F$ on objects. Now, in case $A$ is
(isomorphic to) $\mathbb{M}_n(\field{C})$, (so is $B$, and) if $C,D \in
\C(A)$ are isomorphic then so are $F(C)$ and $F(D)$: if $C \cong
D$, then $\dim(C)=\dim(D)$, so $\dim(F(C))=\dim(F(D))$ because $F$
preserves maximal chains, and hence
$F(C) \cong F(D)$. However, it is not clear whether this behaviour is
functorial, \ie extends to a functor $G$, or generalizes to infinite dimension.

\appendix

\section{Inverse semigroups and {\'e}tendues}\label{sec:inversesemigroups}

The direct proof of Theorem~\ref{CsCm} follows from~\cite[A.1.1.7]{johnstone:elephant},
but it can also be arrived at through a detour via inverse semigroups, based
on results due to Funk~\cite{funk:semigroupsandtoposes}. This appendix
describes the latter intermediate results, which might be of
independent interest. For the rest of this appendix, we fix a unital C*-algebra $A$, and may therefore write $\Cs$ for $\Cs(A)$ and $\Cm$ for $\Cm(A)$.

\begin{definition}
  Define a set $T$ with functions $T \times T \stackrel{\cdot}{\to} T$
  and $T \stackrel{*}{\to} T$ by: 
  \begin{align*}
    & T = \left\{ C \stackrel{i}{\rightarrowtail} A
      \mid C \in \C, \; i \text{ is an injective
        $*$-homomorphism} \right\},\\
    & (C' \stackrel{i'}{\rightarrowtail} A) \cdot (C
    \stackrel{i}{\rightarrowtail} A) =
    (i^{-1}(C') \stackrel{i' \after i}{\rightarrowtail} A), \\
    & (C \stackrel{i}{\rightarrowtail} A)^* = (i(C)
    \stackrel{i^{-1}}{\rightarrowtail} A).
  \end{align*}
  The multiplication is well-defined, because the inverse image of a
  *-algebra under a $*$-homomorphism is again a *-algebra, and the inverse
  image of a closed set is again a closed set, so that $i^{-1}(C)$ is
  indeed a commutative C*-algebra.  The operation * is well-defined
  because of Lemma~\ref{cstarimage}; and on the image,
  $i^{-1}$ is a well-defined injective $*$-homomorphism. One can
  verify that together, these data form an inverse semigroup; that is,
  multiplication is associative, and $i^*$ is the unique element with
  $ii^*i=i$ and $i^*ii^*=i^*$.
\end{definition}

\begin{lemma}\label{domcodT}
  For $(C \stackrel{i}{\rightarrowtail} A) \in T$, we have $i^*i = (C
  \hookrightarrow A)$ and $ii^* = (i(C) \hookrightarrow A)$.  
\end{lemma}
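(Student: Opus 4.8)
The plan is to unfold the definitions of the inverse semigroup operations given in the preceding definition and verify the two claimed identities directly. The statement asserts that for an element $(C \stackrel{i}{\rightarrowtail} A) \in T$, the composite $i^*i$ equals the inclusion $(C \hookrightarrow A)$ and $ii^*$ equals the inclusion $(i(C) \hookrightarrow A)$. Both are routine computations, so the main work is simply tracking what the multiplication and involution do.

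First I would compute $i^*$. By definition, $(C \stackrel{i}{\rightarrowtail} A)^* = (i(C) \stackrel{i^{-1}}{\rightarrowtail} A)$, where $i^{-1}$ denotes the inverse of the corestriction $C \to i(C)$, which is a well-defined injective $*$-homomorphism by Lemma~\ref{cstarimage}. Then I would apply the multiplication rule $(C' \stackrel{i'}{\rightarrowtail} A) \cdot (C \stackrel{i}{\rightarrowtail} A) = (i^{-1}(C') \stackrel{i' \after i}{\rightarrowtail} A)$ to the pair $i^* \cdot i$. Here the left factor is $i^*$ with domain $i(C)$ and map $i^{-1}$, and the right factor is $i$ with domain $C$ and map $i$. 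The multiplication rule tells us to take the inverse image under $i$ of the domain $i(C)$ of the left factor, namely $i^{-1}(i(C)) = C$ since $i$ is injective, and to compose the underlying maps as $i^{-1} \after i$, which is the identity on $C$ viewed as the inclusion $C \hookrightarrow A$. Hence $i^*i = (C \hookrightarrow A)$.

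For the second identity I would compute $ii^*$, now taking $i$ as the left factor and $i^*$ as the right factor. The right factor $i^*$ has domain $i(C)$ and underlying map $i^{-1}$, so the multiplication rule instructs me to form the inverse image under $i^{-1}$ of the domain $C$ of the left factor. Since $i^{-1} \colon i(C) \to C$ is a bijection onto $C$, this inverse image is all of $i(C)$; the composite map is $i \after i^{-1}$, which is the identity on $i(C)$, i.e.\ the inclusion $i(C) \hookrightarrow A$. This gives $ii^* = (i(C) \hookrightarrow A)$, as claimed.

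I do not anticipate any genuine obstacle here; the only point requiring a little care is to keep straight which factor plays the role of the \emph{outer} map (contributing its codomain data) versus the \emph{inner} map (whose inverse image is taken), since the notation $\cdot$ composes right-to-left. The key facts are injectivity of $i$ (so $i^{-1}(i(C)) = C$) and that $i^{-1}$ is a genuine two-sided inverse on the image (so both composites reduce to identities, i.e.\ inclusions). These are exactly the inverse-semigroup relations $ii^*i = i$ and $i^*ii^* = i^*$ specialized to reading off domains, so the computation is really just bookkeeping on the definitions already established.
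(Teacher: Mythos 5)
Your proposal is correct and follows exactly the same route as the paper: unfold the definitions of $*$ and $\cdot$, compute $i^*i = (i^{-1}(i(C)) \stackrel{i^{-1}\after i}{\rightarrowtail} A) = (C \hookrightarrow A)$ and $ii^* = ((i^{-1})^{-1}(C) \stackrel{i \after i^{-1}}{\rightarrowtail} A) = (i(C) \hookrightarrow A)$. Your extra remarks about which factor contributes the inverse image are accurate bookkeeping and match the paper's computation.
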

\begin{proof}
  For the former claim:
  \[
  (C \stackrel{i}{\rightarrowtail} A)^* \cdot (C \stackrel{i}{\rightarrowtail} A) 
  = (i(C) \stackrel{i^{-1}}{\rightarrowtail} A) \cdot (C \stackrel{i}{\rightarrowtail} A) 
  = (i^{-1}(i(C)) \stackrel{i^{-1} \after i}{\rightarrowtail} A) 
  = (C \hookrightarrow A).
  \]
  For the latter claim:
  \begin{align*}
    (C \stackrel{i}{\rightarrowtail} A) \cdot (C \stackrel{i}{\rightarrowtail} A)^*
    & = (C \stackrel{i}{\rightarrowtail} A) \cdot (i(C) \stackrel{i^{-1}}{\rightarrowtail} A) \\
    & = ((i^{-1})^{-1}(C) \stackrel{i \after i^{-1}}{\rightarrowtail} A) 
    = (i(C) \hookrightarrow A).
  \end{align*}
  This proves the lemma.
  \qed
\end{proof}

\begin{definition}
  For any inverse semigroup $T$, one can define the groupoid $G(T)$ whose
  objects are the idempotents of $T$, i.e. the elements $e \in T$ with $e^2=e$. A morphism
  $e \to f$ is an element $t \in T$ satisfying $e=t^*t$ and $tt^*=f$.
\end{definition}

\begin{proposition}\label{GTA}
  The groupoids $G(T)$ and $\Ci$ are isomorphic.
\end{proposition}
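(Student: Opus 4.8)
The plan is to exhibit an explicit functor $\Phi \colon G(T) \to \Ci$ and verify directly that it is bijective on objects and on each hom-set, and that it preserves composition and identities; since both categories are groupoids, this makes $\Phi$ an isomorphism. First I would pin down the objects of $G(T)$, that is, the idempotents of $T$. In any inverse semigroup every idempotent $e$ is self-adjoint and equals $e^*e$, so the idempotents of $T$ are exactly the elements $i^*i$. By Lemma~\ref{domcodT} these are precisely the inclusions $(C \hookrightarrow A)$ for $C \in \C$: each such inclusion is idempotent, and conversely $i^*i=(C\hookrightarrow A)$ for $i$ with domain $C$. This yields a bijection between objects of $G(T)$ and objects of $\Ci$, sending $(C \hookrightarrow A) \mapsto C$.

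Next I would identify the morphisms. A morphism $(C \hookrightarrow A) \to (D \hookrightarrow A)$ in $G(T)$ is a $t=(C' \stackrel{i}{\rightarrowtail} A) \in T$ with $t^*t=(C\hookrightarrow A)$ and $tt^*=(D\hookrightarrow A)$. Reading off $t^*t=(C'\hookrightarrow A)$ and $tt^*=(i(C')\hookrightarrow A)$ from Lemma~\ref{domcodT}, these two conditions force $C'=C$ and $i(C)=D$. Thus $t$ is exactly an injective $*$-homomorphism $i\colon C \rightarrowtail A$ with image $D$, equivalently its corestriction $i'\colon C \stackrel{\cong}{\to} D$ is a $*$-isomorphism. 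Setting $\Phi(t)=i'$ therefore gives a bijection between $G(T)\big((C\hookrightarrow A),(D\hookrightarrow A)\big)$ and the $*$-isomorphisms $C \to D$, matching the hom-sets of $\Ci$.

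Finally I would check functoriality. Composition in $G(T)$ is just the product of $T$: for $t\colon e\to f$ and $s\colon f\to g$ one verifies $(st)^*(st)=e$ and $(st)(st)^*=g$ using $s^*s=f=tt^*$, so $st\colon e\to g$ is the categorical composite. For composable $t=(C\stackrel{i}{\rightarrowtail}A)$ and $s=(D\stackrel{j}{\rightarrowtail}A)$ with $D=i(C)$, the product unfolds to $st=(i^{-1}(D)\stackrel{j\after i}{\rightarrowtail}A)=(C\stackrel{j\after i}{\rightarrowtail}A)$, since $i^{-1}(i(C))=C$ by injectivity; its corestriction is $j'\after i'=\Phi(s)\after\Phi(t)$, so $\Phi$ preserves composition. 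The identity on $(C\hookrightarrow A)$ is the idempotent itself, whose corestriction is $\id[C]$, so identities are preserved as well. Hence $\Phi$ is a functor that is bijective on objects and hom-sets, i.e.\ an isomorphism of groupoids.

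The only delicate point is the bookkeeping forced by the twisted multiplication and involution of $T$: one must track domains and images faithfully through $t^*t$, $tt^*$, and the product, and confirm at the outset that the groupoid composition of $G(T)$ really is the semigroup product (in the order computed above) and not its opposite. Once Lemma~\ref{domcodT} supplies the domain/codomain formulas, all of this reduces to routine identifications, so I do not anticipate any substantial obstacle.
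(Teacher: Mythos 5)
Your proof is correct and follows essentially the same route as the paper: identify the idempotents of $T$ with the inclusions $(C \hookrightarrow A)$ and then use Lemma~\ref{domcodT} to read off that morphisms $e \to f$ are exactly injective $*$-homomorphisms that are surjective onto their codomain, i.e.\ $*$-isomorphisms. Your derivation of the idempotents via the general fact that they are precisely the elements $t^*t$, and your explicit check that the semigroup product matches categorical composition, are minor elaborations of steps the paper treats by direct unfolding or leaves implicit.
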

\begin{proof}
  An element $(C \stackrel{i}{\rightarrowtail} A)$ of $T$ is idempotent when $i^{-1}(C)=C$
  and $i^2=i$ on $C$. That is, the objects of $G(T)$ are the
  inclusions $(C \hookrightarrow A)$ of commutative C*-subalgebras; we
  can identify them with $\C$.

  A morphism $C \to C'$ in $G(T)$ is an element $(D \stackrel{j}{\rightarrowtail} A)$ of $T$ 
  such that $(C \hookrightarrow A) = j^*j = (D \hookrightarrow A)$
  and $(C' \hookrightarrow A) = jj^* = (j(D) \hookrightarrow A)$, \ie
  $C=D$ and $C'=j(D)$. That is, a morphism $C \to C'$ is an injective
  $*$-homomorphism $j \colon C \rightarrowtail C'$ that satisfies
  $j(D)=C'$, \ie that is also surjective. In other words, a morphism
  $C \to C'$ is a $*$-isomorphism $C \to C'$.
  \qed
\end{proof}

\begin{definition}
  For any inverse semigroup $T$, one can define a partial order on the
  set $E(T)=\{ e \in T \mid e^2=e \}$ of idempotents by $e \leq f$ iff $e=fe$.
\end{definition}

In fact, $G(T)$ is an ordered groupoid, with $G(T)_0 = E(T)$.

\begin{proposition}\label{ETA}
  The posets $E(T)$ and $\Cs$ are isomorphic.
\end{proposition}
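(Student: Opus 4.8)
The plan is to transport the isomorphism through the set-level bijection already implicit in the proof of Proposition~\ref{GTA}. That proof shows that the idempotents of $T$ are exactly the inclusions $(C \hookrightarrow A)$ with $C \in \C$, so that the underlying set of $E(T)$ is in bijection with that of $\Cs$ via $(C \hookrightarrow A) \mapsto C$. First I would record this identification, after which it remains only to check that the bijection is monotone in both directions, \ie that the idempotent order of $T$ coincides with inclusion of subalgebras.

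To do so, I would fix two idempotents $e = (C \hookrightarrow A)$ and $f = (C' \hookrightarrow A)$ and compute the product $fe$ directly from the multiplication of $T$. Writing $i$ and $i'$ for the two inclusions, the defining formula $(C' \stackrel{i'}{\rightarrowtail} A) \cdot (C \stackrel{i}{\rightarrowtail} A) = (i^{-1}(C') \stackrel{i' \after i}{\rightarrowtail} A)$ yields $i^{-1}(C') = C \cap C'$, while $i' \after i$ is again an inclusion. Hence $fe = (C \cap C' \hookrightarrow A)$.

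Finally, I would unwind the definition of the idempotent order: $e \leq f$ means $e = fe$, which by the previous computation says $(C \hookrightarrow A) = (C \cap C' \hookrightarrow A)$, that is $C = C \cap C'$, that is $C \subseteq C'$. This is precisely $C \leq C'$ in $\Cs$, so the bijection is an isomorphism of posets.

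The verification is essentially routine; the only point demanding care is the bookkeeping of domains and the direction of the order. In particular, one should check that the contravariance hidden in the star operation does not reverse the order here: since idempotents of an inverse semigroup commute, one has $fe = ef = (C \cap C' \hookrightarrow A)$ symmetrically, so no reversal occurs. I expect no genuine obstacle beyond this.
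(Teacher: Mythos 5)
Your proof is correct and follows essentially the same route as the paper: identify the idempotents with the inclusions $(C \hookrightarrow A)$ via Proposition~\ref{GTA}, compute $(C' \hookrightarrow A)\cdot(C \hookrightarrow A) = (C \cap C' \hookrightarrow A)$, and read off that the idempotent order $e = fe$ is exactly $C \subseteq C'$. The extra remark about commutativity of idempotents is harmless but not needed.
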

\begin{proof}
  As with $G(T)$, objects of $E(T)$ can be identified with $\C$. Moreover,
  there is an arrow $C \to C'$ if and only if 
  \[
  (C \hookrightarrow A) = (C' \hookrightarrow A) \cdot (C
  \hookrightarrow A) = (C \cap C' \hookrightarrow A),
  \]
  \ie when $C \cap C'=C$. That is, there is an arrow $C \to C'$ iff
  $C \subseteq C'$.
  \qed
\end{proof}

Also, $G(T)$ is always a subcategory of the following category $L(T)$.

\begin{definition}
  For any inverse semigroup $T$, one can define the left-cancellative
  category $L(T)$ whose objects are the idempotents of $T$. A morphism
  $e \to f$ is an element $t \in T$ satisfying $e=t^*t$ and $t=ft$.
\end{definition}

\begin{proposition}\label{LTA}
  The categories $L(T)$ and $\Cm$ are isomorphic.
\end{proposition}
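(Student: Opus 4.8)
The plan is to mirror the proofs of Propositions~\ref{GTA} and~\ref{ETA}, unfolding the defining conditions of $L(T)$ with the help of Lemma~\ref{domcodT}. Exactly as there, the idempotents of $T$ are the inclusions $(C \hookrightarrow A)$, so the objects of $L(T)$ may be identified with $\C$, which agrees with the objects of $\Cm$ on the nose. The real content is therefore the identification of morphisms.

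First I would fix idempotents $e = (C \hookrightarrow A)$ and $f = (C' \hookrightarrow A)$ and determine when $t = (D \stackrel{j}{\rightarrowtail} A) \in T$ is a morphism $e \to f$. The condition $e = t^*t$ gives $(C \hookrightarrow A) = (D \hookrightarrow A)$ by Lemma~\ref{domcodT}, hence $D = C$. For the condition $t = ft$, I would expand, using the definition of the multiplication, $f \cdot t = (C' \hookrightarrow A) \cdot (C \stackrel{j}{\rightarrowtail} A) = (j^{-1}(C') \stackrel{j}{\rightarrowtail} A)$, where composing with the inclusion $C' \hookrightarrow A$ leaves the underlying map $j$ unchanged. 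Thus $t = ft$ amounts to $C = j^{-1}(C')$, that is, $j(C) \subseteq C'$; equivalently $t t^* = (j(C) \hookrightarrow A) \leq f$ by Lemma~\ref{domcodT}. So a morphism $e \to f$ in $L(T)$ is precisely an injective $*$-homomorphism $j \colon C \rightarrowtail A$ with image inside $C'$, and this is the same datum as an injective $*$-homomorphism $C \rightarrowtail C'$ (corestrict to the image, respectively post-compose with $C' \hookrightarrow A$). This produces the bijection on hom-sets, sending $t$ to the corestriction $j \colon C \to C'$.

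It then remains to check functoriality. For identities, taking $t = e$ yields $t^*t = e$ and $e \cdot t = e$, so the $L(T)$-identity on $C$ corresponds to the identity $*$-homomorphism, as in $\Cm$. For composition, given $C \stackrel{j}{\rightarrowtail} C'$ and $C' \stackrel{k}{\rightarrowtail} C''$ represented by $t = (C \stackrel{j}{\rightarrowtail} A)$ and $s = (C' \stackrel{k}{\rightarrowtail} A)$, I would compute the semigroup product $s \cdot t = (j^{-1}(C') \stackrel{k \after j}{\rightarrowtail} A) = (C \stackrel{k \after j}{\rightarrowtail} A)$, using $j^{-1}(C') = C$ from $j(C) \subseteq C'$, verify routinely that $(s\cdot t)^*(s\cdot t) = e$ and $g \cdot (s \cdot t) = s \cdot t$ so that $s \cdot t$ is a morphism $e \to g$, and note that it represents exactly the composite $k \after j$ in $\Cm$. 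Hence the correspondence is a strict isomorphism of categories, not merely an equivalence.

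The one point requiring care, though minor, is the bookkeeping of variance: one must confirm that the multiplication in $T$ composes the underlying $*$-homomorphisms in the \emph{same} order as $\Cm$ rather than reversing it (in contrast to the contravariance exploited in the $P(X) \rtimes S(X)$ description). The computation $j^{-1}(C') = C$ whenever $j(C) \subseteq C'$ is the key identity that collapses the general multiplication formula of $T$ to ordinary composition of $*$-homomorphisms, and once it is in place the isomorphism $L(T) \cong \Cm$ follows directly.
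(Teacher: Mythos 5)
Your proposal is correct and follows essentially the same route as the paper: identify the objects with $\C$ as in Propositions~\ref{GTA} and~\ref{ETA}, use Lemma~\ref{domcodT} to turn $e=t^*t$ into $D=C$, and unfold $t=ft$ to $C=j^{-1}(C')$, i.e.\ $j(C)\subseteq C'$. The paper's proof stops there, leaving the identity/composition checks implicit, so your extra verification of functoriality (and the observation that $j^{-1}(C')=C$ collapses the semigroup multiplication to ordinary composition in the correct order) is just a more explicit rendering of the same argument.
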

\begin{proof}  
  As with $G(T)$, objects of $L(T)$ can be identified with $\C$.
  A morphism $C \to C'$ in $L(T)$ is an element
  $(j \colon D \rightarrowtail A)$ of $T$ such that $(C
  \hookrightarrow A) = j^*j = (D \hookrightarrow A)$ and
  \[
  (D \stackrel{j}{\rightarrowtail} A) = (C' \hookrightarrow A) \cdot
  (D \stackrel{j}{\rightarrowtail} A) = (j^{-1}(C')
  \stackrel{j}{\rightarrowtail} A).
  \]
  That is, a morphism $C \to C'$ is an injective $*$-homomorphism $j \colon
  C \rightarrowtail A$ such that $C=j^{-1}(C')$.
  Hence we can identify morphisms $C \to C'$ with injective
  $*$-homomorphisms $j \colon C \rightarrowtail C'$.  
  \qed
\end{proof}

Every ordered groupoid $G$ has a classifying topos $\B(G)$. We now
describe the topos $\B(G(T))$ explicitly, unfolding the definitions
on~\cite[page~487]{funk:semigroupsandtoposes}. 

For a presheaf $P \colon \Cs\op \to \Cat{Set}$, define another
presheaf $P^* \colon \Cs\op \to \Cat{Set}$ by 
\[
P^*(C) = \{ (j,x) \mid j \in \Ci(A)(C,C'), x \in
P(C') \}.
\]
On a morphism $C \subseteq D$, the presheaf $P^* \colon P^*(D) \to P^*(C)$ acts as
\[
(k \colon D' \stackrel{\cong}{\to} D, y \in P(D'))
\longmapsto
\big( k\big|_{C} \colon C \stackrel{\cong}{\to} k(C), \,P(k(C) \subseteq
D')(y) \big).
\]
An object of $\B(G(T))$ is a pair $(P,\theta)$ of a presheaf $P \colon
\Cs\op \to \Cat{Set}$ and a natural transformation $\theta \colon P^*
\Rightarrow P$. A morphism $(P,\theta) \to (Q,\xi)$ is a natural
transformation $\alpha \colon P \Rightarrow Q$ satisfying $\alpha \after
\theta = \xi \after \alpha^*$, where the natural transformation $\alpha^*
\colon P^* \Rightarrow Q^*$ is defined by $\alpha^*_C(j,x) = (j,\alpha_C(x))$.

\begin{lemma}\label{CmB}
  The toposes $\PSh(\Cm)$ and $\B(G(T))$ are equivalent.
\end{lemma}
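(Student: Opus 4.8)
The plan is to build an explicit equivalence by exploiting the canonical factorization of any injective $*$-homomorphism $k \colon C \rightarrowtail C'$ as the $*$-isomorphism $C \stackrel{\cong}{\to} k(C)$ followed by the inclusion $k(C) \hookrightarrow C'$. Under the identifications $\Cm \cong L(T)$, $\Cs \cong E(T)$, and $\Ci \cong G(T)$ from Propositions~\ref{LTA},~\ref{ETA}, and~\ref{GTA}, this factorization says that a presheaf on $\Cm$ is determined by exactly two pieces of data: its restriction to the inclusions $\Cs$, and its action on the isomorphisms $\Ci$. These are precisely the ingredients $P$ and $\theta$ of an object of $\B(G(T))$, so the equivalence should amount to repackaging one as the other.

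First I would define a functor $\Phi \colon \PSh(\Cm) \to \B(G(T))$. On an object $Q \in \PSh(\Cm)$, set $\Phi(Q) = (P,\theta)$, where $P = Q|_{\Cs}$ is the restriction along the inclusion $\Cs \hookrightarrow \Cm$, and $\theta_C(j,x) = Q(j)(x) \in P(C)$ for a $*$-isomorphism $j \colon C \stackrel{\cong}{\to} C'$ and $x \in P(C') = Q(C')$; on a morphism $\alpha \colon Q \Rightarrow Q'$, set $\Phi(\alpha) = \alpha|_{\Cs}$. I would then check that $\theta$ is natural with respect to inclusions (this is the functoriality of $Q$ applied to the squares relating isomorphisms and inclusions), that it respects identities and composition of isomorphisms so that $(P,\theta)$ is a legitimate object of $\B(G(T))$, and that the morphism condition $\alpha \after \theta = \xi \after \alpha^*$ is nothing but the naturality of $\alpha$ on isomorphisms.

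Conversely, I would define $\Psi \colon \B(G(T)) \to \PSh(\Cm)$ by reconstructing a presheaf from $(P,\theta)$: on objects put $\Psi(P,\theta)(C) = P(C)$, and on a morphism $k \colon C \rightarrowtail C'$ in $\Cm$ use the factorization to set
\[
\Psi(P,\theta)(k)(x) = \theta_C\big( C \stackrel{\cong}{\to} k(C),\, P(k(C) \subseteq C')(x) \big).
\]
Since $\Phi$ and $\Psi$ visibly agree with the identity on the underlying sets $P(C)$, they will be mutually inverse once functoriality is established, giving the desired equivalence of toposes.

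The main obstacle is verifying that $\Psi(P,\theta)$ is genuinely a presheaf on $\Cm$, that is, that it respects composition of injective $*$-homomorphisms. Given composable $C \stackrel{k}{\rightarrowtail} C' \stackrel{l}{\rightarrowtail} C''$, one must factor $l \after k$ and reconcile the two nested isomorphism-then-inclusion decompositions. This is exactly where the naturality of $\theta$ with respect to inclusions, together with its compatibility with composition of isomorphisms, is needed: naturality transports the inner inclusion past the outer isomorphism, and the composition law for $\theta$ glues the two isomorphism-actions into a single one. I expect this diagram chase---matching $\Psi(l \after k)$ with $\Psi(k) \after \Psi(l)$---to be the only delicate computation; the identity axiom, the morphism correspondence, and the two identities $\Phi\Psi \cong \mathrm{id}$ and $\Psi\Phi \cong \mathrm{id}$ are then routine.
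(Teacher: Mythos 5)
Your construction is exactly the one the paper uses: its proof invokes Funk's general Proposition~1.12 together with Proposition~\ref{LTA} and then records precisely your two formulas --- restriction to inclusions plus the $\theta$-action on isomorphisms in one direction, and the isomorphism-then-inclusion factorization $k = (k(C)\hookrightarrow C')\after(C\stackrel{\cong}{\to}k(C))$ in the other. The functoriality check you single out as the delicate step is the content the paper delegates to the cited result, and you have correctly identified the ingredients it requires (naturality of $\theta$ over inclusions and its compatibility with composition of isomorphisms), so your argument is sound and essentially identical to the paper's.
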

\begin{proof}
  Combine Proposition~\ref{LTA} with
  \cite[Proposition~1.12]{funk:semigroupsandtoposes}. 
  Explicitly, $(P, \theta)$ in $\B(G(T))$ gets mapped to $F \colon
  \Cm(A)\op \to \Cat{Set}$ defined by $F(C)=P(C)$ and
  \[
  F(k \colon C \rightarrowtail D)(y) = \theta_C( k \colon C
  \stackrel{\cong}{\to} k(C), \, P(k(C) \subseteq D)(y) ).
  \]
  Conversely, $F$ in $\PSh(\Cm)$ gets mapped to
  $(P,\theta)$, where
  \begin{align*}
    P(C) & = F(C), \\
    P(C \subseteq D) & = F(C \hookrightarrow D), \\
    \theta_C (j \colon C \stackrel{\cong}{\to} C', x \in F(C')) & = F(C'
    \stackrel{j^{-1}}{\to} C \subseteq D) (x).
  \end{align*}
  This finishes the proof. 
  \qed
\end{proof}

There is a canonical object $\mathbf{S}=(S,\theta)$ in $\B(G(T))$,
defined as follows.
\begin{align*}
  S(C) & = \{ i \colon C \rightarrowtail A \}, \\
  S(C \subseteq D)(j \colon D \rightarrowtail A) & = (j\big|_C \colon
  C \rightarrowtail A).
\end{align*}
In this case $S^*$ becomes
\begin{align*}
  S^*(C) & = \{(j,i) \mid j \colon C \stackrel{\cong}{\to} C', i
  \colon C' \rightarrowtail A \}, \\
  S^*(C \subseteq D)(j,i) & = (j \mid_C \colon C \stackrel{\cong}{\to}
  j(C),\, i\big|_{j(C)} \colon j(C) \rightarrowtail A ).
\end{align*}
Hence we can define a natural transformation $\theta \colon S^*
\Rightarrow S$ by 
\[
\theta_C(j,i) = i \after j.
\]

The equivalence of the previous lemma maps $\mathbf{S}$ in $\B(G(T))$
to $\mathbf{D}$ in $\PSh(\Cm)$:
\begin{align*}
  \mathbf{D}(C) & = \{ i \colon C \rightarrowtail A \}, \\
  \mathbf{D}(k \colon C \rightarrowtail D)(j \colon D
  \rightarrowtail A) & = (j \after k \colon C \rightarrowtail A).
\end{align*}

Technically, the topos $\B(G(T))$ is an {\'e}tendue: the
unique morphism from some object $\mathbf{S}$ to the terminal object
is epic, and the slice topos $\B(G(T))/\mathbf{S}$ is (equivalent
to) a localic topos. The following lemma makes the latter equivalence
explicit.

\begin{lemma}
  The toposes $\B(G(T))/\mathbf{S}$ and $\PSh(\Cs)$ are equivalent. 
\end{lemma}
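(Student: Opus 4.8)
The plan is to deduce the equivalence by chaining the two equivalences already in hand: the ``internal'' one $\B(G(T)) \simeq \PSh(\Cm)$ of Lemma~\ref{CmB}, and the ``external'' one $\PSh(\Cm)/\Aut \simeq \PSh(\Cs)$ of Theorem~\ref{CsCm}. The only thing standing between them is a change of slicing object: Lemma~\ref{CmB} sends the canonical object $\mathbf{S}$ to the presheaf $\mathbf{D}$, whereas Theorem~\ref{CsCm} slices over $\Aut$. So first I would prove that $\mathbf{D} \cong \Aut$ in $\PSh(\Cm)$.

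To establish $\mathbf{D} \cong \Aut$, I would write down a natural isomorphism $\varphi \colon \mathbf{D} \Rightarrow \Aut$ directly from the definitions. At each $C$, send an injective $*$-homomorphism $i \colon C \rightarrowtail A$ to its corestriction onto its image, $i \colon C \stackrel{\cong}{\to} i(C)$; this is a legitimate element of $\Aut(C)$ because $i(C) \in \C$ by Lemma~\ref{cstarimage}. The inverse sends a $*$-isomorphism $i \colon C \stackrel{\cong}{\to} C'$ to the composite $C \stackrel{i}{\to} C' \hookrightarrow A$, which lands in $\mathbf{D}(C)$ since $C' \in \C$. These two assignments are visibly mutually inverse. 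Naturality with respect to a morphism $k \colon C \rightarrowtail D$ amounts to observing that corestricting the composite $j \after k$ to its image agrees with the result of corestricting $j$ and then transporting along $k$ as prescribed by $\Aut$: both are the $*$-isomorphism $C \stackrel{\cong}{\to} j(k(C))$ given by $c \mapsto j(k(c))$. This matches the post-composition action defining $\mathbf{D}$ with the restriction-and-post-composition action defining $\Aut$, so $\varphi$ is natural.

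With $\mathbf{D} \cong \Aut$ secured, I would finish formally by slicing. Since the equivalence of Lemma~\ref{CmB} sends $\mathbf{S}$ to $\mathbf{D} \cong \Aut$, Lemma~\ref{slices} yields an equivalence $\B(G(T))/\mathbf{S} \simeq \PSh(\Cm)/\Aut$. Composing this with the equivalence $\PSh(\Cm)/\Aut \simeq \PSh(\Cs)$ of Theorem~\ref{CsCm} produces the desired $\B(G(T))/\mathbf{S} \simeq \PSh(\Cs)$.

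The only genuine work is in the middle step: making the pointwise bijection $\mathbf{D}(C) \cong \Aut(C)$ natural, that is, checking that corestriction-to-image intertwines the two presheaf structures on morphisms of $\Cm$. I expect this to be the main obstacle, though it is a routine matter of bookkeeping restrictions against the functoriality formulas already spelled out for $\mathbf{D}$ and $\Aut$; everything surrounding it is a formal consequence of Lemmas~\ref{CmB} and~\ref{slices} and Theorem~\ref{CsCm}.
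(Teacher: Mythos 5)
Your argument is correct as a standalone proof of the lemma, but it runs in the opposite logical direction from the paper's. The paper proves this lemma by combining Proposition~\ref{ETA} (the identification $E(T) \cong \Cs$ of posets) with a general result of Funk on classifying toposes of inverse semigroups, namely that $\B(G(T))/\mathbf{S}$ is equivalent to presheaves on the idempotent poset $E(T)$; no appeal to the main body is made. You instead transport the slice along the equivalence of Lemma~\ref{CmB} using Lemma~\ref{slices}, identify $\mathbf{D}$ with $\Aut$ (your naturality check for the corestriction map is right, and the paper itself records this isomorphism at the end of the appendix), and then invoke Theorem~\ref{CsCm}. Since Theorem~\ref{CsCm} is proved independently in Section~\ref{sec:inclusionsinjections} via the category-of-elements machinery, your chain is not logically circular. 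However, it defeats the stated purpose of the appendix: the appendix is advertised as an \emph{alternative} derivation of Theorem~\ref{CsCm} via inverse semigroups, and the lemma you are proving is one of the two ingredients that are supposed to \emph{yield} Theorem~\ref{CsCm} (via the subsequent theorem that $\PSh(\Cm)/\mathbf{D} \simeq \PSh(\Cs)$). If this lemma is proved from Theorem~\ref{CsCm}, the appendix's re-derivation of that theorem becomes circular. So your route buys a self-contained proof that avoids citing Funk's equation, at the cost of collapsing the appendix's independent path; the paper's route buys genuine independence from Section~\ref{sec:inclusionsinjections} at the cost of outsourcing the topos-theoretic content to the inverse-semigroup literature.
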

\begin{proof}
  Combine Proposition~\ref{ETA} with equation (1) in \cite[page
  488]{funk:semigroupsandtoposes}.
  \qed
\end{proof}

Combining the previous two lemmas, we find:

\begin{theorem}
  The toposes $\PSh(\Cm)/\mathbf{D}$ and $\PSh(\Cs)$ equivalent. 
  \qed
\end{theorem}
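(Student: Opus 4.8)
The plan is to chain together the two preceding lemmas by transporting the slice construction across an equivalence of toposes. The engine for this is Lemma~\ref{slices}, which guarantees that any (half of an) equivalence of categories restricts to an equivalence of slice categories over corresponding objects. Since nothing more than this formal transport is needed, the theorem carries a bare \qed; the task is simply to spell out what ``combining the previous two lemmas'' amounts to.

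First I would invoke Lemma~\ref{CmB}, which provides an equivalence between $\PSh(\Cm)$ and $\B(G(T))$. The crucial additional datum—already recorded in the discussion immediately following that lemma—is that this equivalence identifies the distinguished object $\mathbf{S}$ of $\B(G(T))$ with the object $\mathbf{D}$ of $\PSh(\Cm)$. Indeed, $\mathbf{D}(C) = \{ i \colon C \rightarrowtail A \} = S(C)$, and the two presheaf actions (together with the natural transformation $\theta$) match under the explicit correspondence of Lemma~\ref{CmB}. Writing $F \colon \B(G(T)) \to \PSh(\Cm)$ for the relevant half of that equivalence, we therefore have $\mathbf{D} \cong F(\mathbf{S})$.

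Next I would apply Lemma~\ref{slices} with $\cat{C} = \B(G(T))$, $\cat{D} = \PSh(\Cm)$, $X = \mathbf{S}$, and $Y = \mathbf{D}$, which yields an equivalence $\B(G(T))/\mathbf{S} \simeq \PSh(\Cm)/\mathbf{D}$. Finally I would combine this with the immediately preceding lemma, which asserts $\B(G(T))/\mathbf{S} \simeq \PSh(\Cs)$, to obtain the desired chain
\[
\PSh(\Cm)/\mathbf{D} \;\simeq\; \B(G(T))/\mathbf{S} \;\simeq\; \PSh(\Cs),
\]
establishing the claim.

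There is no genuine obstacle here: the statement is a formal consequence of results already in hand, and its proof is purely a matter of assembling equivalences. The one point demanding care is the verification that $\mathbf{D}$ and $\mathbf{S}$ truly correspond under the equivalence of Lemma~\ref{CmB}, so that the hypothesis $Y \cong F(X)$ of Lemma~\ref{slices} is met—but this is exactly the computation performed in the paragraph that defines $\mathbf{S}$ and $\theta$, and so it can be cited rather than repeated.
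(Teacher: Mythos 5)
Your proposal is correct and follows exactly the paper's intended argument: the theorem is stated as an immediate combination of Lemma~\ref{CmB}, the observation that the equivalence there carries $\mathbf{S}$ to $\mathbf{D}$, and the lemma identifying $\B(G(T))/\mathbf{S}$ with $\PSh(\Cs)$. Your explicit appeal to Lemma~\ref{slices} to transport the slice across the equivalence is precisely the formal step the paper leaves implicit.
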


In our specific application, we have more information and it is
helpful to reformulate things slightly. By Lemma~\ref{cstarimage},
giving an injective $*$-homomorphism $i \colon C \rightarrowtail A$ is
the same as giving a $*$-isomorphism $C \cong C'$ for 
some $C' \in \C$ (by taking $C'=i(C)$). Hence $\mathbf{S}$ is
isomorphic to the object
$\mathbf{Aut}=(\mathrm{Aut},\theta)$ in $\B(G(T))$ with
$\theta_C(j,i)=i \after j$. This leads to Theorem~\ref{CsCm}.

\bibliographystyle{plain}
\bibliography{cccc}

\end{document}